\numberwithin{equation}{section}
\newtheorem {Theorem}    {Theorem}[section]
\newenvironment{Theorem*}
  {\Theorem}
  {\endTheorem}
\newtheorem {Lemma}      [Theorem]    {Lemma}
\newtheorem {Corollary}  [Theorem]    {Corollary}
\newtheorem {Proposition}[Theorem]    {Proposition}
\theoremstyle{definition}
\newenvironment{Exercise*}
  {\Exercise}
  {\endExercise}
\newcounter{AbcT}
\numberwithin{equation}{section}
\newcommand {\Heads}[1]   {\smallskip\pagebreak[1]\noindent{\bf #1{\hskip 0.2cm}}}
\newcommand {\Head}[1]    {\Heads{#1:}}
\DeclareMathOperator {\E} {{\mathbf{E}}}
\newcommand {\Q} {{\mathbb Q}}
\newcommand {\R} {{\mathbb R}}
\newcommand {\Z} {{\mathbb Z}}
\newcommand {\cC} {{\mathcal C}}
\renewcommand{\liminf}{\varliminf}
\renewcommand{\limsup}{\varlimsup}
\DeclareMathOperator{\supp}{supp}
\DeclareMathOperator{\SL}{SL}
\renewcommand {\sl} {\operatorname{sl}}
\DeclareMathOperator{\GL}{GL}
\newcommand {\IGNORE}[1]  {}
\renewcommand {\setminus}       {\smallsetminus}
\newcommand {\absolute}[1] {\left| {#1} \right|}
\newcommand {\norm}[1] {\left\| {#1} \right\|}
\renewcommand{\sl}{\mathfrak{sl}}
\newcommand\cA{\mathcal{A}}
\newcommand\Ad{\operatorname{Ad}}
\DeclareMathOperator{\Lie}{Lie}
\newcommand{\localfield}{k}
\newcommand{\invmeas}[2]{{\mu^{#1 }_{\mathrm{inv},#2}}}
\newcommand{\Dinv}{D^{\mathrm{inv}}_\mu}
\begin{document}

	\title[Symmetry of entropy in higher rank diagonalizable actions]{Symmetry of entropy in higher rank diagonalizable actions and measure classification}
	\author[M. Einsiedler]{Manfred Einsiedler}
	\address[M. E.]{ETH Z\"urich, R\"amistrasse 101
	CH-8092 Z\"urich
	Switzerland}
	\email{manfred.einsiedler@math.ethz.ch}
	\author[E. Lindenstrauss ]{Elon Lindenstrauss}
	\address[E. L.]{The Einstein Institute of Mathematics\\
	Edmond J. Safra Campus, Givat Ram, The Hebrew University of Jerusalem
	Jerusalem, 91904, Israel}
	\email{elon@math.huji.ac.il}
	\thanks{M.~E.~acknowledges the support by the SNF (Grant 200021-152819).
	E.~L.~acknowledges the support of ISF grant 891/15.}
	\date{\today}
	\dedicatory{In memory of Roy Adler}
	\begin{abstract}
	An important consequence of the theory of entropy of $\Z$-actions is that the events measurable with respect to the far future coincide (modulo null sets) with those measurable with respect to the distant past, and that measuring the entropy using the past will give the same value as measuring it using the future. In this paper we show that for measures invariant under multiparameter algebraic actions if the entropy attached to coarse Lyapunov foliations fail to display a stronger symmetry property of a similar type this forces the measure to be invariant under non-trivial unipotent groups. Some consequences of this phenomenon are noted.
	\end{abstract}
	\maketitle

\section{Introduction}\label{introduction}
Let $M = \prod_ {\ell = 1 } ^ m \SL (d, \localfield _ \ell)$ with $\localfield _ \ell$ local fields of either zero or positive characteristic (not necessarily the same for all $\ell$). Let $G$ be a closed subgroup of $M$, $\Gamma$ a lattice in $G$ and $a_1, a_2, \dots, a _ r $ be $r$ elements in $G$ so that for every $i$, all of the components of $a_i$ in $\SL (d, \localfield _ \ell)$ are diagonal matrices; in particular all the $a _ i$ commute. For $\mathbf n \in \Z ^ r$ we denote $a ^ \mathbf n = a_1 ^ {n _ 1} \dots a _ r ^ {n _ r}$.

Suppose $1 \leq \ell \leq m$ and for distinct $1 \leq i,j \leq d$ let $E _ {ij} ^ {\ell}$ denote the elementary unipotent subgroup of $\SL (d, \localfield _ \ell)$ with one on the diagonal, arbitrary element of $\localfield _ \ell$ on the $i, j$ entry and zero elsewhere.
Then there is some linear functional $\alpha: \Z ^ r \to \R$ so that for every $h \in E_{ij}^{\ell}$
\begin{equation*}
\absolute {a ^ {\mathbf n} (h-1) a ^ {- \mathbf n}} = e ^ {\alpha (\mathbf n)} \absolute {h-1}
.\end{equation*} 
These functionals will be called the \emph{Lyapunov exponents} of the action of $a$ on $M$, and the set of such functionals will be denoted by $\Phi$. Two Lyapunov exponents $\alpha, \alpha ' \in \Phi$ will be said to correspond to the same coarse exponent if $\alpha = c \alpha '$ for some $c > 0$. The equivalence class of a Lyapunov exponent $\alpha$ under this equivalence relation will be denoted by $[\alpha]$, and the set of equivalence classes, a.k.a.\ the \emph{coarse Lyapunov exponents} will be denoted by $[\Phi]$.

For every $\mathbf n \in \Z ^ r$ we define the corresponding expanding horospherical subgroup $G ^ + _ {\mathbf n}$ of $G$ by
\begin{equation*}
G ^ + _ {\mathbf n} = \left\{ g \in G: a ^ {-j \mathbf n} g a ^ {j \mathbf n} \to 1 \text{ as $j \to \infty$}\right\} .\end{equation*}
We can attach a subgroup $U _ {[\alpha]}$ to every coarse Lyapunov exponent $[\alpha] \in [\Phi]$ as follows:
\begin{equation*}
U _ {[\alpha]} = \bigcap_ {\mathbf n: \alpha (\mathbf n) > 0} G ^ + _ {\mathbf n}
.\end{equation*}
Somewhat more explicitly, let $M _ {[\alpha]}$ denote the subgroup of $M$ generated by all elementary one parameter unipotent groups $E _ {i j} ^ \ell$ for which the corresponding Lyapunov exponent is in $[\alpha]$. Then it can be shown that $U _ {[\alpha]} = G \cap M _ {[\alpha]}$. Note that our definitions imply that if $\alpha \in \Phi$ so is $-\alpha$, but it may well happen for some $G$ that $U_{[\alpha] }$ is nontirivial but $U_{[-\alpha] } = \{1\}$. 
We will say that a coarse Lyapunov exponent $[\alpha]$ is \emph{a coarse Lyapunov exponent for~$G$} (or appears in $G$) if $U_{[\alpha]}\neq\{1\}$.

Let $\mu$ be an $A$-invariant and ergodic probability measure on $G/\Gamma$.
To each coarse Lyapunov exponent $[\alpha] \in [\Phi]$, we attach a system of leafwise measures $\mu _ x ^ {[\alpha]}$. Formally, $x \mapsto \mu _ x ^ {[\alpha]}$ is a Borel measurable map from $G / \Gamma$ to the space of equivalence classes up to a positive multiplicative constant of locally finite measures on $U _ {[\alpha]}$ satisfying a suitable growth condition (enforcing such a growth condition makes the space of locally finite measures up to a multiplicative constants into a compact metrizable space).

Let $I ^ {[\alpha]} _ x$ be the group of $u \in U _ {[\alpha]}$ satisfying that $\mu _ x ^ {[\alpha]}u = \mu _ x ^ {[\alpha]}$. As a locally compact nilpotent group, $I^{[\alpha]}_x$ is unimodular. It would be convenient to have also a notation for the Haar measure on $I ^ {[\alpha]} _ x$ considered as a locally finite measure up to multiplicative constant --- we denote this measure by $\invmeas {[\alpha]} x$.
Using Poincare recurrence and ergodicity one can easily show that if
$I _ x ^ {[\alpha]}$ is nontrivial on a set of positive $\mu$-measure then it is nontrivial a.s. and moreover contains arbitrarily small and arbitrary large elements of $U _ {[\alpha]}$, i.e. the group 
$I _ x ^ {[\alpha]}$ is neither discrete nor bounded.

We define for $\mathbf n \in \Z^r$ and $\alpha \in \Phi$ with $\alpha(\mathbf n)>0$ the \emph{entropy contribution of $\alpha$ for $\mathbf n$}, denoted by $D_\mu(\mathbf n,[\alpha])$, and the \emph{entropy contribution from the invariance group} $\Dinv (\mathbf n, [\alpha])$ by
\begin{equation}
\begin{aligned}\label{equation defining entropy contribution}
D_\mu(\mathbf n,[\alpha]) &= \lim_ { \ell \to \infty} \frac {-\log \mu ^ {[\alpha]} _ x (a^{-\ell \mathbf n} \Omega _ 0 a^{\ell \mathbf n}) }{ \ell} \\
\Dinv (\mathbf n, [\alpha]) &= \lim_ { \ell \to \infty} \frac {-\log \invmeas {[\alpha]} x (a^{-\ell \mathbf n} \Omega _ 0 a^{\ell \mathbf n}) }{ \ell}
,\end{aligned}
\end{equation}
where 
$\Omega _ 0$ is a relatively compact open neighborhood of $1$ in $U _ {[\alpha]}$. 
For notational convenience, we set $D_\mu(\mathbf n,[\alpha])=\Dinv (\mathbf n, [\alpha])=0$ if $\alpha(\mathbf n)\leq 0$. Formally, these quantities depend on the choice of $x \in G/\Gamma$ and strictly speaking to make sense of the expressions inside the limits above one needs to choose a particular measure in the proportionallity class $\mu _ x ^ {[\alpha]}$ and $\invmeas {[\alpha]} x$. Both limits in \eqref{equation defining entropy contribution} are known to exist and moreover
\begin{equation*}
\frac {D _ \mu (\mathbf n, [\alpha]) }{ \alpha (\mathbf n)} =
\frac {D _ \mu (\mathbf m, [\alpha]) }{ \alpha (\mathbf m)}
\end{equation*}
for every $\mathbf n, \mathbf m \in \Z ^ r$ with $\alpha (\mathbf n), \alpha (\mathbf m) > 0$ (and similarly for $\Dinv(\bullet, [\alpha])$) --- see e.g.~\cite{Einsiedler-Lindenstrauss-Clay}. We will write $a^{\bullet}$ for the group $\{a^{\mathbf{n}}:\mathbf n \in \Z^r\}$.

\begin{Theorem}\label{symmetry theorem}
Let $\mu$ be an $a ^ {\bullet}$-invariant and ergodic measure on $G / \Gamma$, with $a ^ {\bullet}$, $G$ and $\Gamma$ as above. Let $[\alpha] \in [\Phi]$ and $\mathbf n \in \Z ^ r$ satisfy $\alpha (\mathbf n) > 0$.
 Then
\begin{equation*}
\Dinv (\mathbf n,{[\alpha]}) \geq D _ \mu (\mathbf n,{[\alpha]}) - D _ \mu (-\mathbf n,{[-\alpha]})
.\end{equation*}
In particular, if $ D _ \mu (\mathbf n,{[\alpha]}) > D _ \mu (-\mathbf n,{[-\alpha]})$ the invariance group $I _ x ^ {[\alpha]}$ contains arbitrarily small and arbitrary large elements. Moreover, if $D _ \mu (\mathbf n,{[-\alpha]})=0$, and in particular if $U _ {[- \alpha]} = \left\{ 1 \right\}$, then $\mu _ x ^ {[\alpha]} = \invmeas {[\alpha]} x$ a.s.
\end{Theorem}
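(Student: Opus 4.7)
\emph{Strategy.} The plan is to compute both $D_\mu(\mathbf n,[\alpha])$ and $D_\mu(-\mathbf n,[-\alpha])$ as conditional entropies associated to $\sigma$-algebras subordinate to $U_{[\alpha]}$ and $U_{[-\alpha]}$ respectively, and to compare them using the $a^{\mathbf n}$-invariance of $\mu$. The slack in the comparison will be controlled by the stabilizer $I_x^{[\alpha]}$ of the leafwise measure, giving rise to the $\Dinv(\mathbf n,[\alpha])$ term.

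\emph{Setup.} To make this precise, I would pick a countably generated $\sigma$-algebra $\mathcal A$ which is $U_{[\alpha]}$-subordinate in the sense of Einsiedler--Lindenstrauss (each atom is an open precompact piece of a $U_{[\alpha]}$-leaf) and satisfies $a^{-\mathbf n}\mathcal A \supseteq \mathcal A$, and dually a $\sigma$-algebra $\mathcal B$ subordinate to $U_{[-\alpha]}$ with $a^{\mathbf n}\mathcal B \supseteq \mathcal B$. Standard leafwise-measure computations (as in \cite{Einsiedler-Lindenstrauss-Clay}) then identify
\begin{equation*}
D_\mu(\mathbf n,[\alpha]) = H_\mu(a^{-\mathbf n}\mathcal A \mid \mathcal A), \qquad D_\mu(-\mathbf n,[-\alpha]) = H_\mu(a^{\mathbf n}\mathcal B \mid \mathcal B),
\end{equation*}
while conditioning additionally on the sub-$\sigma$-algebra generated by the $I_x^{[\alpha]}$-orbits inside each atom of $\mathcal A$ reproduces $\Dinv(\mathbf n,[\alpha])$.

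\emph{Main comparison and consequences.} By $a^{\mathbf n}$-invariance of $\mu$, $H_\mu(a^{-\mathbf n}\mathcal A \mid \mathcal B) = H_\mu(\mathcal A \mid a^{\mathbf n}\mathcal B)$. Combining this identity with chain-rule manipulations involving the refined $\sigma$-algebra $\mathcal A \vee \mathcal B$, together with the transversality of the $U_{[\alpha]}$- and $U_{[-\alpha]}$-directions, I expect to reduce the theorem to an inequality of the shape
\begin{equation*}
D_\mu(\mathbf n,[\alpha]) - D_\mu(-\mathbf n,[-\alpha]) \leq H_\mu\bigl(a^{-\mathbf n}\mathcal A \mid \mathcal A \vee \mathcal B\bigr).
\end{equation*}
The main technical step, and where I expect the principal obstacle to lie, is to identify the right-hand side with $\Dinv(\mathbf n,[\alpha])$: once one also conditions on $\mathcal B$, which pins down $x$ transversely to the $U_{[\alpha]}$-leaf, the remaining conditional entropy inside the leaf measures only the Haar-like freedom along $I_x^{[\alpha]}$ that $\mu_x^{[\alpha]}$ cannot distinguish. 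The two consequences then follow immediately: if the main inequality is strict, $\Dinv(\mathbf n,[\alpha]) > 0$ forces $I_x^{[\alpha]}$ to be non-trivial, hence (by the Poincar\'e-recurrence and ergodicity argument noted in the excerpt) to contain arbitrarily small and arbitrarily large elements a.s.; and if $D_\mu(-\mathbf n,[-\alpha]) = 0$, the inequality combined with the general reverse inequality $\Dinv \leq D_\mu$ forces equality, which is equivalent to $\mu_x^{[\alpha]}$ being proportional to the Haar measure on $I_x^{[\alpha]}$, i.e.\ $\mu_x^{[\alpha]} = \invmeas{[\alpha]}{x}$ a.s.
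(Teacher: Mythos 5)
There is a genuine gap at exactly the step you yourself flag as the principal obstacle, and I do not believe it can be closed along the lines you sketch. The paper's argument does not condition on a $\sigma$-algebra subordinate to $U_{[-\alpha]}$; it conditions on the $\sigma$-algebra $\mathcal A$ generated by the Borel map $x \mapsto \mu_x^{[\alpha]}$ itself. This is a fundamentally different object: a subordinate $\sigma$-algebra has atoms that are bounded plaques of a $U_{[-\alpha]}$-leaf, while the paper's $\mathcal A$ has atoms which are the level sets of the leafwise-measure map. Conditioning on $\mathcal B$ (subordinate to $U_{[-\alpha]}$) does not ``pin down'' $\mu_x^{[\alpha]}$ at all --- knowing which $U_{[-\alpha]}$-plaque you lie in says nothing about the proportionality class of the leafwise measure along $U_{[\alpha]}$. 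What makes the paper's choice work is a pair of non-obvious structural facts proved separately: (i) by the product structure theorem, $\mu_x^{[\alpha]}$ is $U_{[\beta]}$-equivariant for every $[\beta] \neq [\pm\alpha]$, so conditioning on $\mathcal A$ leaves the entropy contributions $D_\mu(\cdot,[\beta])$ unchanged for those $\beta$; and (ii) a nontrivial argument (Lemmas~\ref{supported on invariance group lemma}--\ref{lemma used to show invariance}, Corollary~\ref{invariance corollary}) showing that the conditional leafwise measure $\mu_x^{\mathcal A,[\alpha]}$ is a.s.\ Haar on $I_x^{[\alpha]}$, giving $D_\mu^{\mathcal A}(\mathbf n,[\alpha]) = \Dinv(\mathbf n,[\alpha])$. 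Neither of these is reachable from $\mathcal A \vee \mathcal B$ with your $\mathcal A$, $\mathcal B$.

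The second gap is structural: your comparison is ``local,'' involving only the pair $[\alpha],[-\alpha]$, and it drops all other coarse Lyapunov exponents. The paper's proof is essentially forced to be ``global'': it applies the full symmetry $h_\mu(\mathbf n|\mathcal A) = h_\mu(-\mathbf n|\mathcal A)$ and the decomposition $h_\mu(\mathbf n|\mathcal A) = \sum_{[\beta]:\beta(\mathbf n)>0} D_\mu^{\mathcal A}(\mathbf n,[\beta])$, and then the product structure (step (i) above) is what lets the contributions of all $[\beta]\neq[\pm\alpha]$ cancel, leaving the displayed inequality. Without an analogue of this cancellation there is no route to an inequality involving only $[\alpha]$ and $[-\alpha]$, so the inequality of the shape you propose has no visible derivation. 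The intuition that $\Dinv$ measures ``the Haar-like freedom along $I_x^{[\alpha]}$ that $\mu_x^{[\alpha]}$ cannot distinguish'' is correct and is indeed the content of Corollary~\ref{invariance corollary}; but the vehicle that realizes it must be the $\sigma$-algebra generated by $x \mapsto \mu_x^{[\alpha]}$, not a $U_{[-\alpha]}$-subordinate one, and you need the global entropy identity plus product structure, not a direct two-exponent chain rule.
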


For $\alpha$ for which $U _ {[- \alpha]} = \left\{ 1 \right\}$, Theorem~\ref{symmetry theorem} essentially reduces to the main technical result of Katok and Spatzier's paper \cite{Katok-Spatzier} (see also \cite{Kalinin-Katok-Seattle}). 
The symmetry of the entropy contribution has been an important component in the classification of measures invariant under a maximal split torus in semisimple groups, e.g.\ \cite[Cor.~3.4]{Einsiedler-Katok-Lindenstrauss}, \cite[Thm.~5.1]{Einsiedler-Lindenstrauss-split}. In those papers, this symmetry was derived from more detailed analysis and using additional structure; the main point of this paper is that this symmetry is a rather general feature of higher rank diagonal actions. Thus one can view Theorem~\ref{symmetry theorem} as a common generalization of both the techniques of \cite{Katok-Spatzier} and the above auxiliary results from \cite{Einsiedler-Katok-Lindenstrauss,Einsiedler-Lindenstrauss-split}.

Specializing further, we obtain a sharper result in the same vein. Suppose now all the local fields $\localfield _ \ell$ are either $\R$ or $\Q _ {p}$ (of course, more than one prime $p$ may be used). Suppose further that $a _ 1, a _ 2, \dots, a _ r$ are not only diagonal but satisfy the following further assumption:
\begin{description}[align=left,style=nextline,leftmargin=*,labelsep=\parindent,font=\normalfont\itshape]
\item [Class-$\cA'$] The components of all $a _ i$ over $\R$ are \emph{positive} diagonal matrices, and for every $\Q _ p$ there is some $\theta _ p \in \Q _ p ^ {\times}$ with $\absolute {\theta _ p} > 1$ so that all the  entries in the diagonal of the $\Q _ p$-components of all the $a _ i$ are powers of $\theta _ p$.
\end{description}

\begin{Theorem}\label{refined main theorem} Let $G$ be as above, $\Gamma < G$ a discrete subgroup, and assume that $a _ 1$, \dots, $a _ r$ satisfy the Class-$\cA'$ assumption. Let $\mu$ be an $a ^ {\bullet}$-invariant and ergodic measure on $G / \Gamma$. Then there is a closed subgroup $L < G$ containing the group $\{a^\bullet\}$, an element $g_0 \in G$ and a closed normal subgroup (possibly trivial) $H \lhd L$ so that $\mu$ is $H$-invariant and supported on the single $L$-orbit $L.[g_0]_\Gamma$, \ $g_0 ^{-1} H g_0 \cap \Gamma$ is a lattice in $g_0 ^{-1} H g_0 $, and if $\pi: L \to L / H$ is the natural projection $\Lambda = \pi (g_0 \Gamma g_0^{-1} \cap L)$ is a discrete subgroup of $L / H$. Moreover, the corresponding $\bar a _ 1 = \pi (a_1)$, \dots, $\bar a _ 1 = \pi (a_1)$ invariant probability measure $\bar \mu$ on $(L/H) / \Lambda$ satisfies that for each coarse Lyapunov exponent $[\alpha] \in [\Phi]$ and for every $\mathbf n \in \Z^r$,
\begin{equation} \label{quotient symmetry equation}
D _ {\bar\mu} (\mathbf n, [\alpha]) = D _ {\bar\mu} (-\mathbf n, [-\alpha])
.\end{equation}
\end{Theorem}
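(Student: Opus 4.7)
The plan is to construct $L$ and $H$ by a maximality argument, with Theorem~\ref{symmetry theorem} serving as the engine that forces extra unipotent invariance whenever the desired symmetry fails, and to invoke Ratner-type measure classification (available in characteristic zero, which is precisely what Class-$\cA'$ provides) for the structural statements about the support. First, I define $H$ to be the closed subgroup of $G$ generated by all closed subgroups $V \subseteq U_{[\alpha]}$ (as $[\alpha]$ ranges over $[\Phi]$) such that $v_\ast \mu = \mu$ for every $v \in V$. The Class-$\cA'$ assumption guarantees that every $a_i$ acts on $U_{[\alpha]}$ by a common expansion/contraction factor (a positive exponential in the real components, a power of $\theta_p$ in the $p$-adic components), so $a^\bullet$ normalizes $H$; set $L = \langle a^\bullet\rangle\cdot H$, giving $H \lhd L$.

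To see that the projected measure $\bar\mu$ on $L/H$ satisfies \eqref{quotient symmetry equation}, argue by contradiction. Suppose $D_{\bar\mu}(\mathbf n,[\alpha]) > D_{\bar\mu}(-\mathbf n,[-\alpha])$ for some $[\alpha]$ and some $\mathbf n$ with $\alpha(\mathbf n)>0$. Since $\bar\mu$ inherits $\pi(a^\bullet)$-ergodicity from $\mu$, Theorem~\ref{symmetry theorem} applies to $\bar\mu$ and yields a leafwise invariance group $I_{\bar x}^{[\alpha]}$ that is $\bar\mu$-a.s.\ non-discrete and unbounded in the corresponding coarse Lyapunov subgroup of $L/H$. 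The central step --- which I expect to be the main technical obstacle --- is to upgrade this \emph{leafwise} invariance into genuine invariance of $\bar\mu$ under a nontrivial closed subgroup of that coarse Lyapunov group. This is a standard leafwise-measure argument in the theory of higher-rank diagonalizable actions (cf.\ the techniques in \cite{Einsiedler-Lindenstrauss-Clay}); it uses Class-$\cA'$ in an essential way, since the $a^\bullet$-action on each $U_{[\alpha]}$ being by real scalings (or by integer powers of $\theta_p$) is what allows the measurably varying groups $I_{\bar x}^{[\alpha]}$ to be compared across orbits and ultimately yields a nontrivial $a^\bullet$-stable subgroup globally preserving $\bar\mu$. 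Lifting this new invariance back to $G$ produces a unipotent subgroup not contained in $H$ that preserves $\mu$, contradicting the maximality of $H$.

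The remaining structural assertions follow from the homogeneous structure provided by the unipotent invariance: since $H$ is generated by unipotents in $\R$- or $\Q_p$-subgroups of $G$ and $\mu$ is $H$-invariant, the $S$-arithmetic Ratner measure classification of Margulis--Tomanov and Ratner decomposes $\mu$ into an integral of finite-volume periodic $H$-orbit measures. Using $a^\bullet$-ergodicity of $\mu$ and the fact that $a^\bullet \subseteq L$ permutes these $H$-orbits, $\mu$ is concentrated on a single $L$-orbit $L.[g_0]_\Gamma$; the finite-volume assertion then gives that $g_0^{-1}Hg_0 \cap \Gamma$ is a lattice in $g_0^{-1}Hg_0$, and the discreteness of $\Lambda = \pi(g_0\Gamma g_0^{-1}\cap L)$ in $L/H$ is a standard consequence. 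The non-trivial content is the previous paragraph; the proof reduces the theorem to (i) Theorem~\ref{symmetry theorem}, (ii) the leafwise-to-global invariance upgrade under Class-$\cA'$, and (iii) Ratner's theorem in the $S$-arithmetic setting.
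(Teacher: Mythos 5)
Your overall skeleton — use Theorem~\ref{symmetry theorem} to force unipotent invariance, promote leafwise invariance to global invariance via Class-$\cA'$, invoke the $S$-arithmetic Ratner/Margulis--Tomanov classification for the structural statements — matches the spirit of the paper, but there are three genuine gaps that prevent the argument as written from closing.

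First, your $H$ is the wrong group. You define $H$ as generated by the closed subgroups of the coarse Lyapunov groups $U_{[\alpha]}$ that preserve $\mu$, i.e.\ essentially the group $J_u$ of the paper (or a subgroup of it). With that $H$ there is no reason for $H$-orbits through $\supp\mu$ to be periodic or for $\mu$ to live on a single $L$-orbit; Ratner's theorem only tells you each ergodic component of $\mu$ with respect to $H$ is supported on a closed orbit of some possibly larger group, varying with the component. What the paper actually does is feed the group $J=\langle a^\bullet, J_u\rangle$ into the measure-rigidity theorem of Margulis--Tomanov for almost linear groups (Lemma~\ref{Margulis-Tomanov application lemma}, citing \cite{Margulis-Tomanov-2}); the $H$ and $L$ in the theorem are \emph{outputs} of that result, with $H\geq J_u$ possibly strictly, and it is this output $H$ that enjoys the periodicity and single-$L$-orbit properties.

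Second, the step you flag as ``the main technical obstacle'' is genuinely nontrivial and is where the Class-$\cA'$ hypothesis does real work; waving at ``standard leafwise-measure arguments'' is not enough. The paper proves this in Lemmas~\ref{connected algebraic lemma} and~\ref{invariance lemma}: one first needs that $I_x^{[\alpha]}$ is a.e.\ a \emph{connected algebraic} subgroup (a fact from \cite{Einsiedler-Katok-II} using the structure of unipotent groups over $\R$ and $\Q_p$), and then a Poincar\'e-recurrence argument in the projective space $P(\wedge^q\sl(d,\localfield_\ell))$, using that $\wedge^q\!\Ad(a^{\mathbf n})$ is Class-$\cA'$ and hence projective orbits converge to fixed points, to conclude that $I_x^{[\alpha]}$ is a.e.\ \emph{constant}, equal to an $a^\bullet$-normalized group $I^{[\alpha]}$. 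Only then can one show $\mu$ is $I^{[\alpha]}$-invariant (via Corollary~\ref{invariance corollary} and Lemma~\ref{lemma about conditional leafwise measures}). Nontriviality alone of the varying groups $I_{\bar x}^{[\alpha]}$ does not give measure invariance, so your contradiction with ``maximality of $H$'' does not yet follow.

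Third, even after obtaining the right $H$, the symmetry \eqref{quotient symmetry equation} cannot be read off directly from Theorem~\ref{symmetry theorem} on $(L/H)/\Lambda$, because $H$ need not be generated by unipotents and $L/H$ need not embed in a product of $\SL(d,\localfield_\ell)$. The paper circumvents this by passing to $H_u$ (the unipotent-generated part of $H$, which is normal in $L$), then to the Zariski closure $M$ of $H_u$, applying the extended Theorem~\ref{extended symmetry theorem} on $(L/M)/\Lambda''$, showing $\mu''$ there has no one-parameter unipotent invariance (so the invariance terms vanish and symmetry follows), and then transporting the identity back to $(L/H)/\Lambda$ through the finite-to-one extension using Proposition~\ref{finite-to-one proposition}. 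Your proposal does not address this layer at all, and it is precisely what makes the conclusion \eqref{quotient symmetry equation} provable on the correct quotient.
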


In \eqref{quotient symmetry equation}, the entropy contributions $D _ {\bar\mu} (\mathbf n, [\alpha])$ are defined as above using the leafwise measures for the action of $\pi(U_{[\alpha]})$ on $(L/H) / \Lambda$.

To illustrate better the implications of Theorems~\ref{symmetry theorem} and~\ref{refined main theorem} we consider the action of the full diagonal group $A < \SL (n, \R)$ on $\SL (n, \R) \ltimes \R ^ n / \SL (n, \Z) \ltimes \Z ^ n $.

\begin{Theorem} \label{affine group theorem} Let $G = \SL (n, \R) \ltimes \R ^ n$ and $\Gamma = \SL (n, \Z) \ltimes  \Z ^ n$, and let $A$ be the maximal diagonalizable subgroup of $\SL (n, \R) < G$ for $n\geq3$.
Let $\mu$ be an $A$-invariant and ergodic measure on $G / \Gamma$ such that for some $a \in A$ the ergodic theoretic entropy $h _ \mu (a)$ is positive. 
Then either $\mu$ is homogeneous or $\mu$ is an extension of a zero entropy $A$-invariant measure $\bar \mu$  on $\SL (n, \R) / \SL (n, \Z)$ (i.e.~$h _ {\bar \mu} (a) = 0$ for any $a \in A$) with Haar measure on the fibers of the extension $G / \Gamma \to \SL (n, \R) / \SL (n, \Z)$.
\end{Theorem}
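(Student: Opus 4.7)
The strategy is to exploit Theorem~\ref{symmetry theorem} applied to the coarse Lyapunov exponents $[\beta_i]$ attached to the $\R^n$-factor, where $\beta_i(\operatorname{diag}(t_1,\dots,t_n))=t_i$ is the $A$-weight on $\R e_i$. A direct computation, crucially using $n\geq 3$, shows that on the Cartan subalgebra of $A$, $\beta_i$ is not a positive multiple of the negative of any other element of $\Phi$---neither $-\beta_j$ for $j\neq i$, nor any $-\alpha_{jk}$. In particular, $U_{[-\beta_i]}=\{1\}$ in $G$. The last clause of Theorem~\ref{symmetry theorem} then gives $\mu_x^{[\beta_i]}=\invmeas{[\beta_i]}{x}$ almost surely, so the leafwise measure along $\R e_i$ is proportional to Haar on its invariance group $I_x^{[\beta_i]}\subseteq\R e_i$. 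The Poincar\'e-recurrence observation preceding the theorem together with $A$-ergodicity forces $I_x^{[\beta_i]}$ to be a.s.\ either $\{0\}$ or all of $\R e_i$. Define $V=\bigoplus\{\R e_i: I_x^{[\beta_i]}=\R e_i\}$, so $\mu$ is invariant under the unipotent abelian subgroup $V$.

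If $V\neq\{0\}$, then $\mu$ is invariant under a nontrivial unipotent subgroup normalized by $A$. Ratner's measure classification theorem applied to $\mu$ (which is $AV$-ergodic by $A$-ergodicity) yields that $\mu$ is algebraic---the Haar measure on a single orbit $L.[g_0]_\Gamma$ of a closed subgroup $L\supseteq AV$. This is the homogeneous alternative of the theorem; it subsumes the ``extension with Haar fibers'' case, which occurs precisely when $L\supseteq\R^n$ and the projection $\bar\mu$ to $\SL(n,\R)/\SL(n,\Z)$ has zero entropy (positive entropy of $\bar\mu$ combined with Haar fibers would force $\mu$ to be homogeneous on a strictly larger orbit).

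If instead $V=\{0\}$, then $D_\mu(\mathbf n,[\beta_i])=0$ for every $i$ and $\mathbf n$, so the entropy product formula gives $h_\mu(a)=h_{\bar\mu}(a)>0$, where $\bar\mu$ is the projection to $\SL(n,\R)/\SL(n,\Z)$. The Einsiedler--Katok--Lindenstrauss measure rigidity theorem (available for $n\geq3$) then classifies $\bar\mu$ as Haar on a closed orbit $L_0.[A_0]$. Triviality of the fiber leafwise measures implies that over $\supp\bar\mu$, $\mu$ is fiberwise a Dirac mass, so $\mu=\sigma_*\bar\mu$ for an $A$-equivariant measurable section $\sigma$ of $G/\Gamma\to\SL(n,\R)/\SL(n,\Z)$. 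A cocycle-rigidity argument using the unipotents in $L_0$ that stabilize $\bar\mu$ upgrades $\sigma$ to an algebraic section, so $\mu$ is itself Haar on a single orbit, hence homogeneous.

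The main obstacle is the last step in the case $V=\{0\}$: promoting the $A$-equivariant section $\sigma$ to an algebraic one. A cleaner route bypasses this by applying Theorem~\ref{refined main theorem} directly to $\mu$ in $G$, producing a triple $(L,H,g_0)$ and a quotient measure $\bar\mu'$ on $(L/H)/\Lambda$ with symmetric entropies. From $U_{[-\beta_i]}=\{1\}$ one immediately reads off $D_{\bar\mu'}(\mathbf n,[\beta_i])=0$, which forces every $\R e_i$-direction appearing in $L$ to lie in $H$; a short algebraic analysis of the admissible pairs $(L,H)$ inside $G=\SL(n,\R)\ltimes\R^n$ then yields exactly the two alternatives of the theorem.
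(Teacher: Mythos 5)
Your starting point is exactly right: the coarse Lyapunov exponents $\phi_i(\mathbf m)=m_i$ attached to the $\R^n$-factor have $U_{[-\phi_i]}=\{1\}$ when $n\geq 3$, so Theorem~\ref{symmetry theorem} forces each leafwise measure $\mu_x^{[\phi_i]}$ to be Haar on its invariance group, and by the Poincar\'e-recurrence observation this invariance group is a.s.\ either trivial or all of $\R e_i$. However, the case split you build on top of this ($V\neq\{0\}$ versus $V=\{0\}$) does not line up with the theorem's two alternatives, and both of your branches have genuine gaps.

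In the branch $V\neq\{0\}$, the claim that Ratner-type classification applied to $\mu$ ``yields that $\mu$ is algebraic --- the Haar measure on a single orbit'' is false, and it is false precisely in the case the theorem is designed to capture. The measure $\mu$ is $V$-invariant but generally \emph{not} $V$-ergodic; the relevant tool (Margulis--Tomanov, as invoked in Lemma~\ref{Margulis-Tomanov application lemma}) gives only that $\mu$ is supported on a single $L$-orbit, is invariant under some normal $H\lhd L$ with periodic $H$-orbits, and nothing stronger. When $V=\R^n$ and the base entropy is zero, one ends up exactly with the second alternative of the theorem (a zero-entropy base with Haar fibers), which is not homogeneous. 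And when the base entropy is positive, Ratner/Margulis--Tomanov alone do not give homogeneity of $\mu$ either; the paper handles this case by classifying $\bar\mu$ via Einsiedler--Katok--Lindenstrauss, passing to the $A_1$-ergodic decomposition of $\mu$, and then invoking \cite[Thm.~1.6]{Einsiedler-Lindenstrauss-joinings-2} to upgrade each ergodic piece. That joinings input is the real engine of the positive-base-entropy case and is absent from your $V\neq\{0\}$ branch.

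In the branch $V=\{0\}$ the gap is the one you name yourself: you need to promote the $A$-equivariant measurable section $\sigma$ to an algebraic one. ``A cocycle-rigidity argument using the unipotents in $L_0$'' is a plausible-sounding placeholder, but it is exactly the hard step, and the paper does not argue this way; it again uses the joinings result \cite[Thm.~1.6]{Einsiedler-Lindenstrauss-joinings-2} after passing to the $A_1$-ergodic decomposition, which is what turns ``$\bar\mu$ homogeneous plus Dirac fibers'' into ``$\mu$ homogeneous.'' Notice also that the paper's case split (positive versus zero entropy of $\bar\mu$) is the natural one here: in the zero-base-entropy case one first shows, using \cite[Prop.~6.4]{Einsiedler-Lindenstrauss-joinings-2}, that $D_\mu(\cdot,[\phi_{ij}])=0$ for all $i\neq j$, then uses the symmetry identity~\eqref{consequence of entropy symmetry equation} to force $D_\mu(\cdot,[\phi_i])>0$ for \emph{every} $i$, hence $V=\R^n$; in the positive-base-entropy case one applies EKL plus the joinings theorem. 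Your split conflates the two, so neither branch closes. The closing suggestion to instead apply Theorem~\ref{refined main theorem} and then do ``a short algebraic analysis of the admissible pairs $(L,H)$'' is not carried out and, as stated, does not obviously recover the two alternatives of the theorem; it would in particular still need some input that rules out intermediate, non-algebraic extensions, which is again where the joinings result enters.
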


In addition to Theorem~\ref{symmetry theorem}, the proof of this theorem uses a measure classification results by A. Katok and the two authors of this paper \cite{Einsiedler-Katok-Lindenstrauss} and a result from our paper \cite{Einsiedler-Lindenstrauss-joinings-2}. 
We note that the proof of Theorem~\ref{refined main theorem} also uses a previous measure classification result --- the classification of invariant measure under groups generated by one-parameter unipotent subgroups in the setting of products of real and $p$-adic linear algebraic groups by Ratner~\cite{Ratner-padic} and Margulis and Tomanov~\cite{Margulis-Tomanov}, extending Ratner's measure classification theorem in the real case \cite{Ratner-Annals}.

\section{Preliminaries on leafwise measures}

We recall some basic facts about the construction of leafwise measures.
These are defined in \cite [\S6]{Einsiedler-Lindenstrauss-Clay} in the following general setup: let $X$ be a locally compact second countable metric space, and $U$ a unimodular locally compact second countable group equipped with a proper right invariant metric. Let $B^U_r(u)$ denotes the open ball of radius $r$ around $u \in U$, and $B_r^U=B_r^U(1)$. We assume $U$ acts continuously on $X$ (i.e. the map $(u,x) \mapsto u.x$ is a continuous map $U \times X \to X$) which is locally free, i.e.~for every compact $K \subset X$ there is a $\delta > 0$ so that for all $x \in K$ the map $u \mapsto u .x$ is injective on $B ^ U _ \delta$. Let $\lambda_U$ denote the Haar measure on $U$ normalized so that $\lambda_U(B_1^U) = 1$. 

Given a strictly positive function $\rho$ on $U$ we can consider the space $PM^*_\infty (U)$ of equivalence classes under proportionality of Radon measures $\vartheta $ on $U$ for which $\int_ U \rho (u) \,d \vartheta (u) < \infty$. For a locally compact second countable group $U$ one can introduce a metric on this space under which it is relatively compact.  
Indeed, one may take a sequence $f _ i \in C_c(U)$ spanning a dense subset of $C _ c (U)$, and define
\begin{equation*}
d(\nu, \nu ') =
\sum_ i
2 ^ {- i}
\absolute {
\frac {\int _U f _ i \rho \,d \nu (u) }{ \int _U \rho \,d \nu (u)} -
\frac {\int _U f _ i \rho \,d \nu' (u) }{ \int_ U \rho \,d \nu' (u)}
}
.\end{equation*}
The space $PM^*_\infty (U)$ depends implicitly on the choice of $\rho$, but we shall keep this dependence implicit in our notation.
We say that a countably generated $\sigma$-algebra $\mathcal{A}$ of subsets of $X$ is \emph{subordinate to $U$ on $Y \subset X$} if for every $x \in Y$ there is some $\delta > 0$ so that
\begin{equation}\label{plaque equation}
B _ \delta ^ U . x \subset [x] _\cA \subset B ^ U _ {\delta ^{-1}} . x
.\end{equation}
If $x$ satisfies~\eqref{plaque equation}, we say that $[x]_\cA$ is a \emph{$U$-plaque for $x$}.

\begin{Proposition} [{\cite[Thm.~6.3 and Thm.~6.29]{Einsiedler-Lindenstrauss-Clay}}]
\label{proposition defining leafwise measures}
Let $X$, $U$ be as above. Then there is a strictly positive function $\rho$ on $U$ so that for every probability measure $\mu$ on $X$ such that the action of $U$ on $(X,\mu)$ is a.e.~free, we have a
Borel measurable map $x \mapsto \mu _ x ^ U$ from $X$ to the space of proportionality classes of Radon measures $P M _ \infty ^{*} (U)$ with the following properties:
\begin{enumerate}
\item there is a co-null set $X '$ so that for every $x \in X '$ and $u \in U$ for which $u . x \in X '$ we have that $\mu _ x ^ {U} = \mu _ {u . x} ^ U u$, with $\mu _ {u . x} ^ U u$ denoting the push forward of $\mu _ {u . x} ^ U$ under right multiplication by $u$.
\item
for a.e.~$x \in X$, we have that $1 \in \supp \mu_x^U$ (i.e.\ the identity is a.s.\ in the support of $\mu_x^U$).
\item 
suppose $\mathcal{A}$ is a countably generated $\sigma$-algebra subordinate to $U$ on $Y \subset X$. For $x \in Y$, let $V_x:=\left\{ u \in U: u . x \in [x] _ \mathcal{A} \right\}$. Then for $\mu$-a.e. $x \in Y$, the conditional measure $(\mu |_ U) _ x ^ {\mathcal{A}}$ on $[x]_{\mathcal{A}}$ is proportional to $(\mu _ x ^ U |_ {V_x}).x$, i.e.~the pushforward of $\mu _ x ^ U |_ {V_x}$ under the map $u \mapsto u.x$.
\item for any $r _ n \uparrow \infty$ and $b _ n > 0$ such that $\sum_ n b _ n ^{-1}< \infty$ we have that
\begin{equation*}
\limsup_ {n \to \infty} \frac {\mu _ x ^ U (B ^ U _{r_n}) }{ b_n \lambda _ U (B ^ U _{r_n+2}) } = 0
.\end{equation*}
 for $\mu$-a.e. $x$.
\end{enumerate}
\end{Proposition}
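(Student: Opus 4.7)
The plan is to construct $\mu_x^U$ by piecing together conditional measures of $\mu$ with respect to a sequence of countably generated $\sigma$-algebras whose atoms are increasingly large $U$-plaques through $x$, and then to verify the four listed properties in turn. First I would produce $\sigma$-algebras $\mathcal{A}_n \subset \Borel(X)$ together with Borel sets $Y_n$ with $\mu(Y_n)\to 1$, such that each $\mathcal{A}_n$ is subordinate to $U$ on $Y_n$ and such that for $x \in Y_n$ the $U$-plaque $[x]_{\mathcal{A}_n}$ contains $B^U_{R_n}.x$ for some $R_n \uparrow \infty$. Such $\sigma$-algebras arise from a Vitali-style construction: start from a countable generating family for $\Borel(X)$ whose elements have small diameter relative to the action, quotient by a grid of $U$-translates, and then shave off a small-measure ``boundary'' region to restore the subordination condition. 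For each $n$, define $\tilde\mu_{x,n}$ on $U$ to be the push-forward of the conditional measure $(\mu|_U)_x^{\mathcal{A}_n}$ under the local inverse of the orbit map $u \mapsto u.x$; this is a finite measure supported on $V_{x,n} = \{u \in U: u.x \in [x]_{\mathcal{A}_n}\}$.

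The key compatibility check is that for $n \leq m$ one has $\tilde\mu_{x,m}|_{V_{x,n}}$ proportional to $\tilde\mu_{x,n}$; this is a formal consequence of the tower property of conditional expectation, once one verifies (if necessary after replacing $\{\mathcal{A}_n\}$ by a sub-sequence of common refinements) that $[x]_{\mathcal{A}_n\vee\mathcal{A}_m} = [x]_{\mathcal{A}_m}$ on a set of full $\mu$-measure. After normalizing, say by $\tilde\mu_{x,n}(B_1^U) = 1$, the family $\{\tilde\mu_{x,n}\}_n$ glues to a locally finite measure $\mu_x^U$ on $U$, well-defined up to a positive multiplicative constant. Property~(2) is then automatic since each plaque is a $U$-neighborhood of $x$; property~(3) is immediate from the construction; and property~(1), the $U$-equivariance up to scaling, is the translation-equivariance of conditional measures rephrased after the orbit-map pull-back. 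Borel measurability of $x \mapsto \mu_x^U$ is built in because each $\tilde\mu_{x,n}$ depends Borel measurably on $x$ and the gluing procedure preserves measurability.

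The main technical obstacle is the growth condition~(4), and with it the correct choice of weight $\rho$ defining $PM^*_\infty(U)$. The argument is essentially Borel--Cantelli: for each fixed $n$ and each $N$ large enough that $B^U_{r_n+2}.x \subset [x]_{\mathcal{A}_N}$ for all $x$ in a large subset of $Y_N$, one bounds the $\mu$-measure of the exceptional set $\{x \in Y_N : \tilde\mu_{x,N}(B^U_{r_n}) \geq b_n\,\lambda_U(B^U_{r_n+2})\}$ by a constant multiple of $b_n^{-1}$. This is Markov's inequality applied to $\int_{Y_N} \tilde\mu_{x,N}(B^U_{r_n}) \,d\mu(x)$, which via property~(3) evaluates to the $\mu$-mass of a set contained in $B^U_{r_n}.Y_N$, itself bounded uniformly by $\lambda_U(B^U_{r_n+2})$ via a Fubini-type computation against right Haar measure on $U$ (using the local freeness). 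Summability of $\sum_n b_n^{-1}$ combined with Borel--Cantelli then eliminates infinitely many exceedances for $\mu$-a.e.\ $x$, yielding the claimed $\limsup = 0$. The weight $\rho$ is fixed once and for all to decay sufficiently rapidly (faster than $\lambda_U(B^U_r)^{-1}$ by a summable factor) that $\int \rho\, d\mu_x^U < \infty$ is a direct consequence of this tail bound, so that $\mu_x^U \in PM^*_\infty(U)$ for $\mu$-a.e.\ $x$.
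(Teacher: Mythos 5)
Note first that the paper itself gives no proof of this proposition: it is imported from \cite{Einsiedler-Lindenstrauss-Clay}, Theorems~6.3 and~6.29, with a pointer to \S6 there. So there is no in-paper argument to compare against; the only question is whether your sketch is a faithful account of the construction in that reference. The high-level architecture you describe --- build countably generated $\sigma$-algebras subordinate to $U$, pull conditional measures through the orbit map, glue, then verify~(1)--(4) --- is indeed the right one.

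There is, however, a real conceptual gap in the gluing step. You propose to arrange the needed nesting by ``replacing $\{\mathcal{A}_n\}$ by a sub-sequence of common refinements'' so that $[x]_{\mathcal{A}_n\vee\mathcal{A}_m}$ coincides with one of the two atoms. But refining a $\sigma$-algebra \emph{shrinks} its atoms, whereas your sequence is supposed to have plaques $B^U_{R_n}.x$ with $R_n\uparrow\infty$, i.e.\ increasingly \emph{large} atoms; one cannot get both by refining. Two $\sigma$-algebras subordinate to $U$ at different scales are in general not nested in either direction, and their join $\mathcal{A}_n\vee\mathcal{A}_m$ has the \emph{smaller} plaques. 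What \cite{Einsiedler-Lindenstrauss-Clay} actually establishes is an overlap-compatibility statement: for any two subordinate $\sigma$-algebras $\mathcal{A}_1,\mathcal{A}_2$ on $Y_1,Y_2$, one has $\tilde\mu_{x,1}|_{V_{x,1}\cap V_{x,2}}\propto\tilde\mu_{x,2}|_{V_{x,1}\cap V_{x,2}}$ for a.e.\ $x\in Y_1\cap Y_2$ --- this does follow from the tower property applied to the join, precisely because the join has smaller plaques, and no nesting is required. One then glues by producing, for every $R$ and every $\epsilon$, a subordinate $\sigma$-algebra whose plaques contain $B_R^U.x$ on a set of measure $>1-\epsilon$; this is the genuinely hard geometric input (a Rokhlin-tower-style construction, not a naive Vitali covering), which your sketch also elides. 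Finally, for~(4), note that your $\tilde\mu_{x,N}$ is a probability measure while $\mu_x^U$ is only a proportionality class; what must be controlled is the ratio $\tilde\mu_{x,N}(B_{r_n}^U)/\tilde\mu_{x,N}(B_1^U)$, so a lower bound on the denominator (a recurrence-type estimate) enters in addition to the Fubini upper bound you describe, before Borel--Cantelli can close the argument.
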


\noindent
For proof, see~\cite[\S6]{Einsiedler-Lindenstrauss-Clay}. 
Part~(3) is stated in a slightly different but equivalent way in \cite[\S6]{Einsiedler-Lindenstrauss-Clay}, see \cite[\S7.24]{Einsiedler-Lindenstrauss-Clay} for a brief discussion.
Note that by~(4), every choice of $r _ n \uparrow \infty$ and $b _ n > 0$ such that $\sum_ n b _ n ^{-1} < \infty$ it follows that
the function
\begin{equation*}
\rho (u) = \sum_ n \frac {1}{b _ n^2 \lambda _ U (B ^ U _ {r _ n + 2})} 1_{B^U_{r _ n}} (u)
\end{equation*}
is in $L^1(\mu _ x ^ U)$, hence can be used as the $\rho$ defining $PM^*_\infty(U)$. Moreover, the proof actually gives that for any sequence of measurable subsets $B _ n \subset U$
\begin{equation}\label{growth inequality}
\limsup_ {n \to \infty} \frac {\mu _ x ^ U (B_n) }{ b_n \lambda _ U (B ^ U _1 B_n B^U_1) } = 0 \qquad\text{For $\mu$-a.e.~$x$.}
\end{equation}

\medskip

Constructing $\sigma$-algebras which are subordinate to $U$ on a set $Y$ of large measure is not difficult. For instance, it follows from \cite [Cor.~6.15]{Einsiedler-Lindenstrauss-Clay} that for any $\epsilon > 0$, one can find a countably generated $\sigma$-algebra $\mathcal{A}$ which is subordinate to $U$ on $Y$ with $\mu (Y) > 1 - \epsilon$.
The following proposition gives under some extra assumptions  a countably generated $\sigma$-algebra $\mathcal{A}$ subordinate to $U$ on a large subset $Y \subset X$ which plays nicely with that $a$-action:

\begin{Proposition} [{\cite [Prop.~7.36]{Einsiedler-Lindenstrauss-Clay}}]\label{monotone subordinate algebra proposition}
Let $G$, $\Gamma$, be as above $a \in G$ diagonalizable, and $U < G^+ = \{g \in G: a^{-n}ga^n \to 1\}$ closed and normalized by $a$. Let $\mu$ be an $a$-invariant probability measure on $X=G/\Gamma$ (not necessarily ergodic). Then for every $\epsilon > 0$ there is a $a$-invariant subset $Y \subset X$ with $\mu (Y) > 1 - \epsilon$ and a countably generated $\sigma$-algebra $\mathcal{A}$ which is subordinate to $U$ on $Y$ and which is monotonic under $a$ in the sense that $a B \in \cA$ for every $B \in \mathcal{A}$.
\end{Proposition}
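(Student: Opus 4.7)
The approach is to take a base subordinate $\sigma$-algebra and refine it using forward iterates of $a$. I would first invoke Corollary~6.15 of \cite{Einsiedler-Lindenstrauss-Clay} to obtain a countably generated $\sigma$-algebra $\cA_0$ subordinate to $U$ on a set $Y_0$ with $\mu(Y_0)>1-\epsilon/2$. By inner regularity and a mild rearrangement of the partition, I may arrange that $Y_0$ is closed and that $\cA_0$ is generated by a countable partition whose atoms are either ``nice'' $U$-plaques $P_i\subset Y_0$ (with $B^U_{\delta_0}\cdot x\subset P_i\subset B^U_{1/\delta_0}\cdot x$ uniformly in~$i$) or the single garbage atom $X\setminus Y_0$.

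I then set $\cA:=\bigvee_{n\geq0}a^n\cA_0$, which is countably generated and satisfies $a\cA=\bigvee_{n\geq 1}a^n\cA_0\subset\cA$, so monotonicity is automatic. The set
\begin{equation*}
Y:=\{x\in X:a^{-n}x\in Y_0\text{ for infinitely many }n\geq0\}
\end{equation*}
is manifestly $a$-invariant (the defining condition is preserved under a one-step time shift), and by Poincar\'e recurrence applied to $\mu$ viewed as $a^{-1}$-invariant it contains $\mu$-almost every point of $Y_0$, so $\mu(Y)\geq\mu(Y_0)>1-\epsilon$.

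It remains to verify subordinacy of $\cA$ on $Y$ (possibly after shrinking $Y$ slightly). For $x\in Y$, let $n_x:=\min\{n\geq 0:a^{-n}x\in Y_0\}$. Then $[x]_{a^{n_x}\cA_0}=a^{n_x}\cdot[a^{-n_x}x]_{\cA_0}$ is a $U$-plaque at $x$, yielding the upper bound $[x]_\cA\subset(a^{n_x}B^U_{1/\delta_0}a^{-n_x})\cdot x$. For the lower bound I analyze $[x]_{a^n\cA_0}$ for each $n\geq 0$: if $a^{-n}x\in Y_0$ it contains $B^U_{\delta_0}\cdot x$ because conjugation by $a^n$ expands $U$ (so $a^n B^U_{\delta_0}a^{-n}\supset B^U_{\delta_0}$ since $U\subset G^+$); if $a^{-n}x\notin Y_0$ the atom equals $X\setminus a^n Y_0$, which is open around $x$ since $Y_0$ is closed, hence contains some $B^U_{\delta_{x,n}}\cdot x$ with $\delta_{x,n}>0$. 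The main obstacle is securing a \emph{uniform} positive lower bound $\delta(x)=\inf_n\delta_{x,n}>0$, since $\delta_{x,n}$ depends on how close $a^{-n}x$ lies to the boundary of $Y_0$ in the $U$-direction. I would handle this by further restricting to an $a$-invariant subset $Y'\subset Y$ with $\mu(Y')>1-\epsilon$ on which the backward orbit stays uniformly away from $\partial Y_0$ in the $U$-direction, which can be arranged via a Lusin-type regularity argument baked into the construction of $\cA_0$; on this $Y'$ the required subordinacy then holds.
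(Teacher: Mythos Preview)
The paper does not prove this proposition from scratch: it is quoted from \cite[Prop.~7.36]{Einsiedler-Lindenstrauss-Clay}, and the only argument the paper supplies is a short paragraph explaining how to upgrade the set $Y$ (on which $\cA$ is subordinate) to an $a$-invariant set. Your proposal, by contrast, attempts to rebuild the entire construction. So there are two things to say.

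\medskip
\textbf{The $a$-invariance of $Y$.} Here the paper's argument is different from yours and cleaner. Rather than defining $Y$ via infinitely many backward returns to $Y_0$, the paper takes the already $a$-monotone $\cA$ subordinate on some $Y$ and simply enlarges $Y$ to
\[
Y'=\Bigl(\bigcup_{k\geq 0}a^kY\Bigr)\cap\Bigl(\bigcup_{k\leq 0}a^kY\Bigr),
\]
which is $a$-invariant up to null sets. Monotonicity of $\cA$ gives, for $\ell\leq 0\leq k$, the chain $a^{-k}[a^kx]_\cA\subset[x]_\cA\subset a^{-\ell}[a^\ell x]_\cA$; so if some forward iterate of $x$ lands in $Y$ you get the lower plaque bound, and if some backward iterate lands in $Y$ you get the upper bound. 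This two-sided use of monotonicity avoids entirely the boundary issue you run into.

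\medskip
\textbf{The gap in your subordinacy argument.} Your construction of $\cA=\bigvee_{n\geq0}a^n\cA_0$ and the monotonicity check are fine, as is the upper bound via the first return time $n_x$. But the lower bound is exactly the nontrivial content of \cite[Prop.~7.36]{Einsiedler-Lindenstrauss-Clay}, and you have not supplied it. For the (possibly infinitely many) times $n$ with $a^{-n}x\notin Y_0$, controlling how close $a^{-n}x$ lies to $\partial Y_0$ in the $U$-direction is not a matter of Lusin regularity; in \cite{Einsiedler-Lindenstrauss-Clay} this is handled by building $\cA_0$ from a partition whose pieces have $\mu$-null boundary and then running a Borel--Cantelli type argument using the exponential contraction of $U$ under $a^{-1}$ to ensure that for a.e.\ $x$ the orbit $\{a^{-n}x\}$ does not approach the boundary faster than the expansion by $a^n$ can compensate. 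Your final sentence gestures at restricting to a smaller $a$-invariant $Y'$, but producing such a set with the required uniform separation is precisely the work being deferred. As written, the argument is incomplete at its main step.
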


Note that $a$-invariance of $Y$ is not explicitly stated in \cite [Prop.~7.36]{Einsiedler-Lindenstrauss-Clay}, but if $\mathcal{A}$ is $a$-monotone and subordinate to $U$ on $Y$ then it is also subordinate to $U$ on \[
Y' = \left (\bigcup_ {k \geq 0} a ^ k Y\right ) \cap \left (\bigcup_ {k \leq 0} a ^ k Y\right ).
\]
Indeed, by monotonicity of $\mathcal{A}$, we have that if $\ell \leq 0 \leq k$,
\begin{equation*}
a ^ {- k} [a^ k x] _ {\mathcal{A}} \subseteq [x] _ {\mathcal{A}} \subseteq a ^ {- \ell} [a^ \ell x] _ {\mathcal{A}}.
\end{equation*}
Thus, if $x \in a ^ \ell Y \cap a ^ k Y$, then as $ [a^ k x] _ {\mathcal{A}}$ contains a small neighborhood of the $U$-orbit around $a^kx$, the atom $[x] _ {\mathcal{A}}$ also contains a small neighborhood of $x$ in its $U$-orbit, and since $[a^ \ell x] _ {\mathcal{A}}$ is a subset of a compact subset of $U$ acting on $a^ \ell x$, the atom $[x] _ {\mathcal{A}}$ is bounded. This shows that indeed $\mathcal{A}$ is subordinate to $U$ on $Y'$. The set~$Y '$ contains~$Y$ and is clearly $a$-invariant up to null sets.

We also note in this context the following:

\begin{Proposition} [{\cite[Lem.~7.16]{Einsiedler-Lindenstrauss-Clay}}] \label{transformation rule for leafwise measures}
Suppose $\langle a \rangle \ltimes U$ acts on $X$ with the action by~$a$ preserving a probability measure $\mu$ on $X$. Then for $\mu$-a.e. $x$
\begin{equation*}
\mu _ {a.x} ^ U = a \mu _ x ^ U a^{-1}
.\end{equation*}
\end{Proposition}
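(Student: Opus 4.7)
The plan is to prove the transformation rule by comparing both $\mu_{ax}^U$ and $a\mu_x^U a^{-1}$ against the conditional measures of $\mu$ on $\sigma$-algebras subordinate to $U$, using the characterization in Proposition~\ref{proposition defining leafwise measures}(3). The key compatibility is that if $\mathcal{A}$ is countably generated and subordinate to $U$ on $Y$, then its image $\mathcal{A}' = a\mathcal{A} := \{aB : B \in \mathcal{A}\}$ is subordinate to $U$ on $aY$. Indeed, since the semidirect product structure ensures that conjugation $c_a(u) = aua^{-1}$ is a homeomorphism of $U$ that preserves the right-invariant metric up to a multiplicative constant, the plaque relation $B_\delta^U \cdot x \subset [x]_\mathcal{A} \subset B_{\delta^{-1}}^U \cdot x$ transfers under $a$ to $c_a(B_\delta^U) \cdot (ax) \subset [ax]_{\mathcal{A}'} \subset c_a(B_{\delta^{-1}}^U) \cdot (ax)$, after adjusting $\delta$ by an amount depending only on $a$.

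First I would fix such an $\mathcal{A}$ on $Y$ with $\mu(Y) > 1-\epsilon$ (e.g.\ via \cite[Cor.~6.15]{Einsiedler-Lindenstrauss-Clay}). Writing $V_x = \{u \in U : u.x \in [x]_\mathcal{A}\}$ and $V'_{ax} = \{v \in U : v.(ax) \in [ax]_{\mathcal{A}'}\}$, the semidirect product relation $a.(u.x) = (aua^{-1}).(ax)$ directly gives $V'_{ax} = c_a(V_x) = a V_x a^{-1}$. The standard transformation law for conditional measures under the measure-preserving bijection $a: X \to X$ then yields $\mu_{ax}^{\mathcal{A}'} = a_{\ast} \mu_x^\mathcal{A}$ for $\mu$-a.e.~$x$.

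Next I would unfold both sides via Proposition~\ref{proposition defining leafwise measures}(3). On the one hand, $\mu_{ax}^{\mathcal{A}'} \propto (v \mapsto v.(ax))_{\ast} \bigl(\mu_{ax}^U|_{V'_{ax}}\bigr)$. On the other hand, pushing the expression $\mu_x^\mathcal{A} \propto (u \mapsto u.x)_{\ast}(\mu_x^U|_{V_x})$ forward by $a$ and changing variables to $v = aua^{-1}$ yields $a_{\ast} \mu_x^\mathcal{A} \propto (v \mapsto v.(ax))_{\ast} \bigl((a\mu_x^U a^{-1})|_{V'_{ax}}\bigr)$, where $a\mu_x^U a^{-1}$ denotes $(c_a)_{\ast} \mu_x^U$. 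Because the action is locally free, the map $v \mapsto v.(ax)$ is injective on $V'_{ax}$, so we may strip it off both sides to obtain $(a\mu_x^U a^{-1})|_{V'_{ax}} \propto \mu_{ax}^U|_{V'_{ax}}$.

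Finally, I would run this argument along a sequence of subordinate $\sigma$-algebras $\mathcal{A}_n$ chosen so that the plaques satisfy $V_x^{(n)} \supset B_n^U$ for $x$ in sets $Y_n$ with $\mu(Y_n) \to 1$; such exhausting plaques can be produced inside the standard construction of \cite[\S6]{Einsiedler-Lindenstrauss-Clay}. Intersecting the full-measure exceptional sets over $n$, one obtains a co-null $X_0 \subset X$ on which $(a\mu_x^U a^{-1})|_{B_n^U} \propto \mu_{ax}^U|_{B_n^U}$ for every $n$, which forces equality of $a\mu_x^U a^{-1}$ and $\mu_{ax}^U$ in $PM_\infty^{\ast}(U)$. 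The main technical point is bookkeeping proportionality constants compatibly with the passage from conditional measures (normalized on atoms) to leafwise measures (defined only up to a positive scalar), and verifying that the adjusted-$\delta$ subordination of $a\mathcal{A}$ is sufficient to place $ax$ in a set of essentially full measure once $x$ is; both issues are handled by working uniformly modulo proportionality and by the $a$-invariance of $\mu$ guaranteeing $\mu(aY) = \mu(Y)$.
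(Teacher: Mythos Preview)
The paper does not give its own proof of this proposition; it is stated with a citation to \cite[Lem.~7.16]{Einsiedler-Lindenstrauss-Clay} and used as a black box. Your argument is essentially the standard one and is correct: the bijection $\mathcal{A} \leftrightarrow a\mathcal{A}$ between $\sigma$-algebras subordinate to $U$, combined with the transformation rule $\mu_{ax}^{a\mathcal{A}} = a_*\mu_x^{\mathcal{A}}$ for conditional measures under the $\mu$-preserving map $a$, pins down $x \mapsto a\mu_{a^{-1}x}^U a^{-1}$ as a system of measures satisfying the characterizing property~(3) of Proposition~\ref{proposition defining leafwise measures}, hence equal a.e.\ to $\mu_{x}^U$.

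Two small comments. First, you do not actually need conjugation by $a$ to be bi-Lipschitz for the proper metric on $U$; being a homeomorphism of $U$ (automatic from the semidirect product structure) already suffices to verify that $a\mathcal{A}$ is subordinate to $U$ on $aY$, since the plaque condition~\eqref{plaque equation} only asks for \emph{some} $\delta > 0$, not one uniformly controlled by the original $\delta$. Second, rather than exhausting with larger and larger plaques, it is slightly cleaner to invoke the essential uniqueness of the leafwise-measure system directly: since $x \mapsto a\mu_{a^{-1}x}^U a^{-1}$ satisfies property~(3) for every $\sigma$-algebra subordinate to $U$ (via the correspondence $\mathcal{A}' \mapsto a^{-1}\mathcal{A}'$), it must coincide a.e.\ with $x \mapsto \mu_x^U$. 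This sidesteps the need to manufacture subordinate $\sigma$-algebras with arbitrarily large plaques on sets of nearly full measure, though your exhaustion approach also works and is in the spirit of how such statements are typically verified in \cite[\S6]{Einsiedler-Lindenstrauss-Clay}.
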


Sometimes it will be convenient to work with countably generated $\sigma$-algebras whose atoms are a bit more general subsets of $U$-orbits. We say that a countably generated $\sigma$-algebra $\mathcal{A}$ of subsets of $X$ is \emph{weakly subordinate to $U$ on $Y \subset X$ relative to $\mu$} if for every $x \in Y$ we have that $[x] _\cA \subset U.x$ and that 
\begin{equation}\label{Vx equation}
V_x = \{u \in U: u.x \in [x]_{\cA}\}
\end{equation}
 is a bounded subset of $U$ with $\mu_x^U(V_x)>0$.

\begin{Lemma} \label{slightly more general sigma-algebra lemma} Let $\mathcal{C}$ be a countably generated $\sigma$-algebra that is weakly subordinate to $U$ on a set $Y \subset X$. Let $V_x$ be as in~\eqref{Vx equation}. Then for $\mu$-a.e.~$x \in Y$,
\begin{equation}\label{conditional measures recipe}
\mu _ x ^ {\mathcal{C}} = \frac 1{\mu _ x ^ {U}(V_x)}\left(\left. \mu _ x ^ {U} \right |_{V_x}.x\right)
.\end{equation}
\end{Lemma}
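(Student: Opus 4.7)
My plan is to show $\tilde\mu_x:=\frac{1}{\mu_x^U(V_x)}(\mu_x^U|_{V_x}).x$ equals $\mu_x^{\mathcal C}$ for $\mu$-a.e.\ $x\in Y$ by verifying that $\tilde\mu_x$ is $\mathcal C$-measurable, is a probability measure on $[x]_{\mathcal C}$, and disintegrates $\mu$ over $\mathcal C$; uniqueness of disintegrations then forces the identification.

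For $\mathcal{C}$-measurability I would exploit the equivariance $\mu_{u.x}^U=\mu_x^U u^{-1}$ from Proposition~\ref{proposition defining leafwise measures}~(1) together with the elementary identity $V_{u.x}=V_x u^{-1}$, which follows from~\eqref{Vx equation} whenever $u.x\in[x]_{\mathcal C}$. A direct change of variables $u''=u'u$ inside the integral defining $\tilde\mu_{u.x}$ gives $\tilde\mu_{u.x}=\tilde\mu_x$ for every $u\in V_x$. Consequently $x\mapsto\tilde\mu_x$ depends only on the $\mathcal{C}$-atom of $x$ and is $\mathcal{C}$-measurable; the support condition $\supp\tilde\mu_x\subset V_x.x=[x]_{\mathcal C}$ and the fact that $\tilde\mu_x$ is a probability measure follow from $\mu_x^U(V_x)>0$.

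For the disintegration identity $\int\tilde\mu_x\,d\mu(x)=\mu$ I would refine $\mathcal C$ by a strictly subordinate $\sigma$-algebra. By Corollary~6.15 of~\cite{Einsiedler-Lindenstrauss-Clay}, for every $\epsilon>0$ there is a countably generated $\sigma$-algebra $\mathcal A$ strictly subordinate to $U$ on a set $Y_0$ with $\mu(Y_0)>1-\epsilon$; form $\mathcal B=\mathcal A\vee\mathcal C$. Restriction and renormalization of the formula from Proposition~\ref{proposition defining leafwise measures}~(3) applied to $\mathcal A$ yields
\[
\mu_y^{\mathcal B}=\frac{1}{\mu_y^U(W_y\cap V_y)}(\mu_y^U|_{W_y\cap V_y}).y
\]
at points $y\in Y_0\cap Y$ where $\mu_y^U(W_y\cap V_y)>0$, with $W_y=\{u\in U:u.y\in[y]_{\mathcal A}\}$. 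Decomposing the $\mathcal C$-atom $[x]_{\mathcal C}$ into its $\mathcal A$-pieces $A_\alpha:=[x]_{\mathcal C}\cap[y_\alpha]_{\mathcal A}$ and using the equivariance computation of the previous paragraph, both $\mu_{A_\alpha}^{\mathcal B}$ and the normalized restriction $\tilde\mu_x(A_\alpha)^{-1}\tilde\mu_x|_{A_\alpha}$ are identified with $\mu_x^U|_{V_x\cap W_\alpha v_\alpha}.x$ up to a single scalar; matching the $\mathcal C$-conditional masses $\mu_x^{\mathcal C}(A_\alpha)=\tilde\mu_x(A_\alpha)$ via the tower property together with Proposition~\ref{proposition defining leafwise measures}~(3) then gives the full identification.

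The main technical obstacle is ensuring $\mu_y^U(W_y\cap V_y)>0$ for a $\mu_x^{\mathcal C}$-conull subset of $y\in[x]_{\mathcal C}$, since $V_y$ need not contain a $U$-neighborhood of the identity. I would handle this by taking the $\mathcal A$ of Corollary~6.15 with plaques of diameter shrinking to zero and invoking a density-point argument for the finite measure $\mu_x^U|_{V_x}$: for $\mu_x^U|_{V_x}$-a.e.\ $v\in V_x$, the translated plaque $W_{v.x}v$ is eventually contained in $V_x$ up to a vanishing fraction, ensuring $\mu_{v.x}^U(W_{v.x}\cap V_{v.x})>0$. Exhausting $Y$ by $Y\cap Y_0$ as $\epsilon\to 0$ completes the identification up to a $\mu$-null set.
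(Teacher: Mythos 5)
Your overall plan — verify that the right-hand side satisfies the defining properties of the conditional measure system and invoke uniqueness — is exactly what the paper says to do, and your first step ($\mathcal{C}$-measurability via the equivariance $\mu_{u.x}^U=\mu_x^U u^{-1}$, \ $V_{u.x}=V_x u^{-1}$, and a change of variables) is correct and clean. The issue is in the disintegration step, and you have correctly put your finger on where the difficulty lies, but your proposed fix is not actually a routine verification.

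The density-point argument you sketch is a genuine gap. You need, for $\mu_x^U|_{V_x}$-a.e.\ $v$, that the $\mathcal{A}$-plaque through $v.x$ eventually meets $V_x$ in positive $\mu_x^U$-measure as the plaques of $\mathcal A$ shrink. But (i) the $\mathcal{A}$-plaques produced by Cor.~6.15 of \cite{Einsiedler-Lindenstrauss-Clay} are Borel mosaic pieces, not balls centered at the point $v$, so $v$ may sit at the ``edge'' of its plaque for every $\mathcal{A}$ in your shrinking sequence; and (ii) a Lebesgue differentiation theorem for the Radon measures $\mu_x^U$ along such a family of shrinking non-centered plaques requires a Vitali-- or Besicovitch--type covering argument which is not automatic for a general locally compact unimodular $U$ and which you would need to set up precisely. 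As stated, ``the translated plaque $W_{v.x}v$ is eventually contained in $V_x$ up to a vanishing fraction'' is an assertion, not a proof, and it is exactly the hard part.

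A cleaner route, and quite likely the one the authors have in mind given how Lemma~\ref{slightly more general sigma-algebra lemma} is used, is to first treat the special case where $\mathcal{C}$ refines a $\sigma$-algebra $\mathcal{A}$ that is (strictly) subordinate to $U$ on $Y$. Then $[x]_{\mathcal{C}}\subset[x]_{\mathcal{A}}$, so $V_x\subset W_x$ and hence $\mu_x^{\mathcal{A}}([x]_{\mathcal{C}})=\mu_x^U(V_x)/\mu_x^U(W_x)>0$ \emph{automatically} from the definition of weakly subordinate — the positivity problem you were fighting simply does not arise. Now the standard fact about nested countably generated $\sigma$-algebras (cf.\ \cite[Prop.~5.20]{Einsiedler-Ward-book}, which the paper invokes elsewhere) gives $\mu_x^{\mathcal{C}}=\mu_x^{\mathcal{A}}([x]_{\mathcal{C}})^{-1}\,\mu_x^{\mathcal{A}}|_{[x]_{\mathcal{C}}}$, and substituting Proposition~\ref{proposition defining leafwise measures}~(3) for $\mu_x^{\mathcal{A}}$ yields \eqref{conditional measures recipe} directly. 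This special case covers the paper's only application (the $\sigma$-algebra $\tilde{\mathcal{C}}=\mathcal{C}\vee\mathcal{A}\vee\{X',X\setminus X'\}$ in Lemma~\ref{lemma about conditional leafwise measures} refines a subordinate $\mathcal{C}$). If you want the lemma in its stated generality you still must confront the density issue, but you should be explicit that it is a non-trivial differentiation-theorem input rather than presenting it as a routine verification.
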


To prove this, one verifies that  the right-hand side of~\eqref{conditional measures recipe} satsifies the defining properties of the system of conditional measures $\mu _ x ^ {\mathcal{C}}$; we leave the details to the reader.

We will mostly be focusing our attention on the special case where we have a closed subgroup $G < M = \prod_ {i = 1 } ^ m \SL (d, \localfield _ i)$ with $\localfield _ i$ local fields of either zero or positive characteristic, \ $X=G/\Gamma$, \ $J < \Z ^ r$ a finite set, and 
\begin{equation}\label{equation defining U and J}
U = U_J= \bigcap_ {\mathbf n \in J} G ^ + _ \mathbf n.
\end{equation}
 Clearly the coarse Lyapunov groups $U ^ {[\alpha]}$ are of this form. Moreover if $\tilde J$ denotes the set of triplets $(\ell, i, j)$ with $1 \leq \ell \leq m$ and $1 \leq i, j \leq d$ with $i \ne j$ so that the elementary one parameter unipotent subgroups $E_{ij}^ \ell < \SL (d, \localfield _ \ell)$ is contracted by $a ^ {-\mathbf n}$ under conjugation for every $\mathbf n \in J$ then $U_J = G \cap M _ J$ with 
\begin{equation}\label{equation defining M_J}
M _ J = \langle E ^ \ell _ {i, j}: (\ell, i, j) \in \tilde J \rangle
\end{equation}
(note that $M_J$ is not just generated by $E ^ \ell _ {i, j}$ but is in fact equal to the image of the product set $\prod_{(\ell,i,j)\in \tilde J} E ^ \ell _ {i, j}$ under the multiplication map.)

We recall from \cite[\S7]{Einsiedler-Lindenstrauss-Clay} that the measure theoretic entropy $h _ \mu (\mathbf n)$ can be easily obtained from the entropy contributions $D _ \mu (\mathbf n, [\alpha])$ for $[\alpha] \in [\Phi]$ as follows:
\begin{equation}\label{entropy contribution formula}
h _ \mu (\mathbf n) = \sum_ {[\alpha]: \alpha (\mathbf n) > 0} D _ \mu (\mathbf n, [\alpha])
.\end{equation}
By the symmetry of entropy we know that $h _ \mu (- \mathbf n) = h _ \mu (\mathbf n)$ for every $\mathbf n \in \Z ^ r$, hence equation~\eqref{entropy contribution formula} implies that the entropy contributions satisfy the identity
\begin{equation}\label{consequence of entropy symmetry equation}
\sum_ {[\alpha]: \alpha (\mathbf n) > 0} D _ \mu (\mathbf n, [\alpha])=
\sum_ {[\alpha]: \alpha (\mathbf n) > 0} D _ \mu (-\mathbf n, [-\alpha])
.\end{equation}

A relative version of this identity also holds and can be proved along the same lines. Explicitly,
let $\mathcal{A}$ be a countably generated $a ^ {\bullet}$-invariant $\sigma$-algebra (i.e. a $\sigma$-algebra so that for every $B \in \mathcal A$ we have that $a ^ {\mathbf n} B \in \mathcal A$ for every $\mathbf n \in \Z ^ r$). 
We decompose $\mu$ as $\mu = \int \mu _ \xi ^ {\mathcal{A}} \,d \mu (\xi)$ with each $\mu _\xi ^ {\mathcal{A}}$ a probability measure supported on the atom $[\xi]_\mathcal{A}$. Given $[\alpha] \in [\Phi]$ and $\xi \in G / \Gamma$, we can construct a new system of leafwise measures $(\mu _ \xi ^ \mathcal{A}) _ x ^ {[\alpha]}$ (the construction of leafwise measures along $U _ {[\alpha]}$ works for any probability measure on $G / \Gamma$, not necessarily a $a ^ {\bullet}$-invariant one). 
Since the measure $\mu _ \xi ^ {\mathcal{A}}$ is supported on $[\xi] _ \mathcal{A}$, for $\mu _ \xi ^ {\mathcal{A}}$-a.e.\ $x \in G / \Gamma$ we have that $\mu _ \xi ^ {\mathcal{A}}=\mu _ x ^ {\mathcal{A}}$ hence we may define a new system of equivalence classes of up to proportionality of locally finite measures on $U_{[\alpha]}$, to be denoted by $\mu _ x ^ {\mathcal{A},[\alpha]}$, so that for $\mu$-a.e.\ $\xi$, for $\mu_\xi^{\mathcal A}$-a.e. $x$,
\[
\mu _ x ^ {\mathcal{A},[\alpha]} = (\mu _ \xi ^ \mathcal{A}) _ x ^ {[\alpha]}
.\]

For $\mathbf n$ such that $\alpha(\mathbf n)>0$ we  define the \emph{entropy contributions of $[\alpha]$ relative to $\mathcal{A}$} by
\begin{equation}\label{equation defining relative entropy contribution}
\begin{aligned}
D_\mu^{\mathcal{A}}(\mathbf n,[\alpha]) &= \lim_ { \ell \to \infty} \frac {-\log \mu ^ {\mathcal{A},[\alpha]} _ x (a^{-\ell \mathbf n} \Omega _ 0 a^{\ell \mathbf n}) }{ \ell} 
\end{aligned}
\end{equation}
as above we set $D_\mu^{\mathcal{A}}(\mathbf n,[\alpha])=
0$ for 
$\mathbf n$ such that $\alpha(\mathbf n)\leq0$.
These relative entropy contributions are related to the conditional entropy $h _ \mu (\mathbf n| \mathcal{A})$ in the same way that the ordinary entropy contributions
relate to the usual ergodic theoretic entropy:
\begin{equation}
h _ \mu (\mathbf n| \mathcal{A}) = \sum_ {[\alpha]: \alpha (\mathbf n) > 0} D _ \mu ^ {\mathcal{A}} (\mathbf n, [\alpha])
\end{equation}
and satisfy a similar identity to \eqref{consequence of entropy symmetry equation} because of the identity $h _ \mu (\mathbf n| \mathcal{A})=h _ \mu (-\mathbf n| \mathcal{A})$. For properies of conditional entropy, see e.g.\ \cite[\S2]{Einsiedler-Lindenstrauss-Ward-book}.

We note the following observation regarding relative leafwise measures:

\begin{Proposition}\label{relative leafwise measures proposition} Let $[\alpha]$ be a coarse Lyapunov exponent and $\mathcal{A}$ a countably generated $\sigma$-algebra of $U _ {[\alpha]}$-invariant sets. Then $\mu$-a.s.,
\begin{equation}\label{eq:alphaAalphaeq}
\mu _ x ^ {[\alpha]} = \mu _ x ^ {\mathcal{A}, [\alpha]}
\end{equation}
\end{Proposition}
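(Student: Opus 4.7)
The plan is to verify \eqref{eq:alphaAalphaeq} by comparing the conditional measures of $\mu$ and of the atomic measures $\mu_\xi^{\mathcal{A}}$ along a common subordinate $\sigma$-algebra. Writing $U = U_{[\alpha]}$, for each $\epsilon, R > 0$ I would use \cite[Cor.~6.15]{Einsiedler-Lindenstrauss-Clay} to construct a countably generated $\sigma$-algebra $\mathcal{B}$ subordinate to $U$ on a set $Y$ with $\mu(Y) > 1 - \epsilon$, arranged so that the associated set $V_x = \{u \in U : u.x \in [x]_{\mathcal B}\}$ contains $B^U_R$ for $\mu$-a.e.\ $x \in Y$.

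The key observation is that the atoms of $\mathcal{A}$ and of $\mathcal{B}$ play complementary roles. Every atom of $\mathcal{A}$ is $U$-invariant and hence contains the entire $U$-orbit of each of its points, while the plaque condition \eqref{plaque equation} gives $[x]_{\mathcal B} \subseteq B^U_{\delta^{-1}}.x \subseteq U.x \subseteq [x]_{\mathcal A}$ for every $x \in Y$. Therefore $[x]_{\mathcal{A} \vee \mathcal{B}} = [x]_{\mathcal B}$ on $Y$, so $\mu_x^{\mathcal{A} \vee \mathcal{B}} = \mu_x^{\mathcal B}$ a.s.\ on $Y$. Combining this with the tower property $(\mu_\xi^{\mathcal{A}})_x^{\mathcal{B}} = \mu_x^{\mathcal{A} \vee \mathcal{B}}$ (valid for $\mu$-a.e.\ $\xi$ and $\mu_\xi^{\mathcal{A}}$-a.e.\ $x$) one obtains
\[
(\mu_\xi^{\mathcal{A}})_x^{\mathcal{B}} = \mu_x^{\mathcal B} \qquad \text{for $\mu$-a.e.\ $x \in Y$.}
\]

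The third step is to translate this identity of conditional measures into one for leafwise measures. By Lemma~\ref{slightly more general sigma-algebra lemma} applied first to the pair $(\mu, \mathcal{B})$ and then to each pair $(\mu_\xi^{\mathcal{A}}, \mathcal{B})$, both sides above are normalized pushforwards under the injection $u \mapsto u.x$ of the restrictions $\mu_x^{[\alpha]}|_{V_x}$ and $\mu_x^{\mathcal{A}, [\alpha]}|_{V_x}$, respectively. Cancelling the pushforward yields that these restrictions are proportional as measures on $V_x \supseteq B^U_R$. Letting $R$ range over a countable sequence tending to infinity and discarding null sets upgrades this local proportionality: because both $\mu_x^{[\alpha]}$ and $\mu_x^{\mathcal{A}, [\alpha]}$ are fixed Radon measures, the proportionality constants on the nested sets $V_x$ must agree, giving a single $c(x) > 0$ with $\mu_x^{[\alpha]} = c(x)\, \mu_x^{\mathcal{A}, [\alpha]}$ on all of $U$, i.e.\ equality in $PM^*_\infty(U)$.

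The bulk of the technical work is in arranging $\mathcal{B}$ so that $V_x$ reliably contains $B^U_R$ for prescribed $R$; this is where the analytic content of the theory of leafwise measures enters, but it is essentially a repackaging of the constructions of \cite[\S6]{Einsiedler-Lindenstrauss-Clay}. Everything after that reduction is formal manipulation of conditional measures using the tower property and the $U$-invariance of $\mathcal{A}$.
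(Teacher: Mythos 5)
Your plan correctly identifies the heart of the argument: the $U_{[\alpha]}$-invariance of the atoms of $\mathcal{A}$ forces $[x]_{\mathcal B}\subset[x]_{\mathcal A}$ whenever $\mathcal{B}$ is subordinate to $U_{[\alpha]}$ at $x$, the tower property then collapses $(\mu_\xi^{\mathcal{A}})_x^{\mathcal{B}}$ to $\mu_x^{\mathcal{B}}$, and reading both sides through the leafwise-measure recipe gives proportionality of $\mu_x^{[\alpha]}$ and $\mu_x^{\mathcal{A},[\alpha]}$ on $V_x$. This is precisely the comparison the paper makes, using a $\sigma$-algebra $\mathcal{C}$ from Proposition~\ref{monotone subordinate algebra proposition} and citing the tower property as in \cite[Prop.~5.20]{Einsiedler-Ward-book}.

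The gap is in how you make $V_x$ large. You assert that Cor.~6.15 of \cite{Einsiedler-Lindenstrauss-Clay}, or ``a repackaging of the constructions of \S6,'' produces, for each $\epsilon$ and each prescribed $R$, a subordinate $\sigma$-algebra $\mathcal{B}$ with $V_x\supset B^U_R$ on a set of measure $>1-\epsilon$. That construction produces a subordinate $\sigma$-algebra whose plaques contain a ball $B^U_{\delta(x)}$ with $\delta(x)>0$ depending on $x$, but there is no control of $\delta(x)$ from below by an arbitrarily prescribed $R$: atoms are cut out by flow boxes, and near the $U$-boundary of a flow box the inner radius $\delta(x)$ is necessarily small; enlarging the flow box does not improve the proportion of points within a fixed distance $R$ of its boundary. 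So the set where $V_x\supset B^U_R$ does not have measure tending to $1$ as $\epsilon\to0$ for fixed large $R$, and your final Borel--Cantelli step falls apart. The paper avoids this by taking $\mathcal{C}$ to be $a_1$-monotone (Proposition~\ref{monotone subordinate algebra proposition}, from \S7 of \cite{Einsiedler-Lindenstrauss-Clay}, not \S6) and invoking Poincar\'e recurrence together with the transformation rule (Proposition~\ref{transformation rule for leafwise measures}): the proportionality is first obtained on some small ball $B^U_{r(x)}$, then for infinitely many $n$ one has $a^{-n}x\in Y$ with $r(a^{-n}x)$ bounded away from $0$, and pushing forward by $a^n$ expands this to arbitrarily large balls around $1$. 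That dynamical step is not a cosmetic alternative to enlarging flow boxes directly; it is the mechanism that gets you past the boundary effect, and your proposal does not supply a substitute for it.
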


\begin{proof}
Let $Y\subset X$ and $\mathcal C$ be 
 as in Proposition~\ref{monotone subordinate algebra proposition}
 for some $a=a^{\mathbf{n}}$ expanding $U_{[\alpha]}$ and some $\epsilon>0$. 
 By assumption on $\mathcal{A}$, for every $x \in Y$ we have that $[x]_\mathcal{A} \subset [x]_\mathcal{C}$.
 This implies that $(\mu_x^{\mathcal{A}})^{\mathcal{C}}_x=\mu_x^{\mathcal{C}}$
 for a.e.\ $x\in Y$ (see e.g.~\cite[Prop.~5.20]{Einsiedler-Ward-book}). 
 By Proposition~\ref{proposition defining leafwise measures}(3) (applied to
 $\mu$ and $\mu_x^{\mathcal{A}}$) and 
 Proposition~\ref{monotone subordinate algebra proposition}
 it follows from this that for a.e.~$x\in\ Y$ there exists some $r=r(x)>0$
  such that \[\mu_x^{[\alpha]}|_{B_r^{U^{[\alpha]}}}\propto\mu_x^{\cA,[\alpha]}|_{B_r^{U^{[\alpha]}}}.\]
 By Poincar\'e recurrence applied to $a^{-1}$ we have this equation for infinitely many points
 of the form $a^{-n}x\in Y$ and such that the corresponding radii $r(a^{-n} x)$ do not converge to zero. 
 Applying $a^n$ to $a^{-n}x$ and using Proposition \ref{transformation rule for leafwise measures}
 this implies \eqref{eq:alphaAalphaeq} on increasingly larger
 subsets of $U$ for a.e.~$x\in Y$, and the proposition follows.
\end{proof}

\subsection{Product structure of leafwise measures}\label{product structure section}
An important property of how leafwise measures on different course Lyapunov exponents interact is a product structures that is due to A.~Katok and the first named author \cite{Einsiedler-Katok, Einsiedler-Katok-II}.

Consider a group $U_J<G$ constructed from a finite subset $J \subset \Z ^ r$ as in \eqref{equation defining U and J}. Let $[\Phi]_J$ be the collection of course Lyapunov exponents $[\alpha]$ for which $U _ {[\alpha]} \leq U$; clearly 
\[
[\Phi]_J=\{[\alpha]\in[\Phi]: \text{$\alpha(\mathbf n)>0 $ for all $\mathbf n \in J$}\}.
\]
A coarse Lyapunov exponent $[\alpha] \in [\Phi] _ J$ is said to be \emph{exposed} in $U_J$ if there is an element $\mathbf j_{\alpha} \in \Z ^ r$ so that $\alpha (\mathbf j_{\alpha}) \leq 0$ while $\beta (\mathbf j_{\alpha}) > 0$ for all other $[\beta] \in [\Phi] _ J$.
\begin{Lemma}
Set $J'=J\cup \left\{ \mathbf j_{\alpha}\right\}$. Then $U_{[\alpha]} \cap U_{J' }={1}$ and
\begin{equation*}
U_J=U_{[\alpha]}U_{J'} =U_{J '}U_{[\alpha]} 
.\end{equation*}
\end{Lemma}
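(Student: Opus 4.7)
The plan is to break the lemma into three parts: identifying $[\Phi]_{J'}$, proving that the intersection is trivial, and establishing the product decomposition.

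First, I claim $[\Phi]_{J'}=[\Phi]_J\setminus\{[\alpha]\}$. A coarse Lyapunov exponent $[\beta]$ lies in $[\Phi]_{J'}$ iff $\beta(\mathbf n)>0$ for every $\mathbf n\in J\cup\{\mathbf j_\alpha\}$. The conditions over $J$ give $[\beta]\in[\Phi]_J$; the condition at $\mathbf j_\alpha$ excludes $[\alpha]$ (since $\alpha(\mathbf j_\alpha)\leq 0$) and, by the exposedness definition, is automatic for every other $[\beta]\in[\Phi]_J$.

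For triviality of $U_{[\alpha]}\cap U_{J'}$, note $U_{J'}\subseteq G^+_{\mathbf j_\alpha}$, so it suffices to show $U_{[\alpha]}\cap G^+_{\mathbf j_\alpha}=\{1\}$. Any nontrivial $g\in U_{[\alpha]}$ has nonzero components only in root groups whose Lyapunov exponents are of the form $\beta=c\alpha$ with $c>0$. Applying the introduction's identity $\bigl|a^{\mathbf n}(h-1)a^{-\mathbf n}\bigr|=e^{\alpha(\mathbf n)}|h-1|$ with $\mathbf n=-n\mathbf j_\alpha$, such components are scaled by factors $e^{-nc\alpha(\mathbf j_\alpha)}\geq 1$, so $a^{-n\mathbf j_\alpha}ga^{n\mathbf j_\alpha}$ cannot tend to $1$ unless $g=1$.

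For the factorization, I would first work at the Lie algebra level. The $\ad(a^\bullet)$-weight decomposition gives $\Lie(U_J)=\bigoplus_{[\beta]\in[\Phi]_J}\Lie(U_{[\beta]})$, which by the first step splits as $\Lie(U_J)=\Lie(U_{[\alpha]})\oplus\Lie(U_{J'})$. To check that $\Lie(U_{J'})$ is an ideal, note that a bracket of a weight-$\alpha'$ vector (with $\alpha'\in[\alpha]$) and a weight-$\beta$ vector (with $[\beta]\in[\Phi]_J\setminus\{[\alpha]\}$) has weight $\alpha'+\beta$; this cannot lie in $[\alpha]$ because $\beta$ is neither a positive multiple of $\alpha$ (excluded by $[\beta]\neq[\alpha]$) nor a negative multiple (since $-\alpha<0$ on $J$ gives $[-\alpha]\notin[\Phi]_J$). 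The same weight analysis applied to $M_J$ shows $M_J=M_{[\alpha]}\ltimes M_{J'}$ as algebraic groups, so for $u\in U_{[\alpha]}$ and $v\in U_{J'}$ we get $uvu^{-1}\in G\cap M_{J'}=U_{J'}$, making $U_{[\alpha]}U_{J'}$ a subgroup of $U_J$ of dimension $\dim U_{[\alpha]}+\dim U_{J'}=\dim U_J$. Since $U_J$ is an irreducible unipotent algebraic (sub)variety, this open and closed subgroup must equal all of $U_J$, and the symmetric identity follows from normality of $U_{J'}$.

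The main obstacle is lifting the Lie algebra direct sum to an actual factorization of the group, which is immediate in characteristic $0$ via $\exp$ but requires more care in mixed characteristic. The cleanest workaround is to order $\tilde J$ so that triples with exponent in $[\alpha]$ appear last in the product-set description of $M_J=\prod_{(\ell,i,j)\in\tilde J}E_{ij}^\ell$; this yields $M_J=M_{J'}\cdot M_{[\alpha]}$ directly at the variety level, with $M_{J'}$ normal, and the factorization of $U_J$ then follows from the identifications $U_J=G\cap M_J$, $U_{[\alpha]}=G\cap M_{[\alpha]}$, $U_{J'}=G\cap M_{J'}$ together with the dimension count above.
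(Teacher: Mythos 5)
Your opening steps are sound, and you correctly identify the product-set decomposition $M_J=M_{J'}M_{[\alpha]}$ coming from $\tilde J=\tilde J'\sqcup\tilde J_{[\alpha]}$, which the paper also uses. However, there is a genuine gap at the heart of the argument: given $u\in U_J=G\cap M_J$ written as $u=m'm_{[\alpha]}$ with $m'\in M_{J'}$, $m_{[\alpha]}\in M_{[\alpha]}$, you never actually establish that $m'$ and $m_{[\alpha]}$ lie in $G$. Your dimension count cannot carry this: first, $U_J=G\cap M_J$ is merely a closed subgroup of $M_J$ (no algebraic structure is assumed on $G$, and in the $p$-adic and positive-characteristic cases it is totally disconnected), so "an open and closed subgroup of the same dimension must be everything" is false --- consider $p\Z_p\subset\Z_p$. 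Second, the weight decomposition $\Lie(U_J)=\bigoplus_{[\beta]\in[\Phi]_J}\Lie(U_{[\beta]})$ is itself not free: what is automatic is $\Lie(U_J)=\bigoplus_{[\beta]}\bigl(\Lie(U_J)\cap\Lie(M_{[\beta]})\bigr)$, but identifying each weight piece with $\Lie(U_{[\beta]})$ has essentially the same content as the lemma (and the whole Lie-algebra language collapses in positive characteristic, which the hypotheses of the lemma allow). Your workaround paragraph reverts to the same product-set factorization of $M_J$ and the same dimension count, so it does not close the gap.

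The paper fills exactly this gap with a closedness-plus-limit argument that works uniformly in all characteristics: exposedness of $[\alpha]$ lets one choose $\mathbf n_i\in\Z^r$ with $\alpha(\mathbf n_i)$ bounded while $\beta(\mathbf n_i)\to-\infty$ for all other $[\beta]\in[\Phi]_J$. Then $a^{\mathbf n_i}m'a^{-\mathbf n_i}\to 1$, while $a^{\mathbf n_i}m_{[\alpha]}a^{-\mathbf n_i}$ stays in a compact set (conjugation by $a^{\mathbf n_i}$ restricted to $M_{[\alpha]}$ amounts to conjugation by a bounded sequence of diagonal elements), so along a subsequence $a^{\mathbf n_i}ua^{-\mathbf n_i}\to m_1$. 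Since $G$ is closed and $u,a^{\mathbf n_i}\in G$, one gets $m_1\in G$; conjugating back, $a^{-\mathbf n_i}m_1a^{\mathbf n_i}\to m_{[\alpha]}$, whence $m_{[\alpha]}\in G$, again by closedness. This is the step you would need to add.
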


\begin{proof}
Since $\alpha$ is exposed in $U _ J$ clearly $U_{[\alpha]}\cap U_{J'} =\{1\}$. This also implies that we can find a sequence $\mathbf n _ i \in \Z ^ d$ so that $\alpha (\mathbf n _ i)$ is bounded but $\beta (\mathbf n _ i) \to {-\infty}$ as $i \to \infty$. Recall that $U_J = G \cap M _ J$ with $M _ J < \prod_ {\ell = 1 } ^ m \SL (d, \localfield _ \ell)$ as in \eqref{equation defining M_J}. Let $\tilde J, \tilde J', \tilde J_{[\alpha]}$ denote the set of triplets $(\ell, i, j)$ so that
\begin{align*}
M_J&=\prod_{(\ell,i,j) \in \tilde J} E ^ \ell _ {i j}, \\
M_{J'}&=\prod_{(\ell,i,j) \in \tilde J'} E ^ \ell _ {i j}, \\
M_{[\alpha]}&=\prod_{(\ell,i,j) \in \tilde J_{[\alpha]}} E ^ \ell _ {i j}
.\end{align*}
Then as $[\alpha]$ is exposed, $\tilde J = \tilde J ' \sqcup \tilde J _ {[\alpha]}$ and $M_J=M_{J'}M_{[\alpha]}$. In particular, we may write any $u \in U _ J$ as $u=m'm_{[\alpha]}$ with $m' \in M_{J'}$ and $m_{[\alpha]} \in M_{[\alpha]}$. To prove the lemma one only needs to show $m_{[\alpha]}$ (and hence $m '$) is in $G$. For the sequence $\mathbf n _ i$ described above, $a ^ {\mathbf n _ i} m ' a ^ {- \mathbf n _ i} \to 1$, while $a ^ {\mathbf n _ i} m _ {[\alpha]} a ^ {- \mathbf n _ i}$ is a sequence of elements of $M$ which can be obtained from $m _ {[\alpha]}$ by conjugating with bounded diagonal matrices (in $M$). Without loss of generality, by passing to a subsequence, we can assume
\begin{equation} \label{convergence of subsequence to m sub 1} a ^ {\mathbf n _ i} m _ {[\alpha]} a ^ {- \mathbf n _ i} \to m_1
.\end{equation}
Then $a ^ {\mathbf n _ i} u a ^ {- \mathbf n _ i} \to m_1$, so $m_1 \in G$. However, by equation \eqref{convergence of subsequence to m sub 1} and the fact that while $a ^ {- \mathbf n _ i}$ is an unbounded sequence, on $U_{[\alpha]}$ conjugation by these elements is equivalent to conjugation by bounded elements of the diagonal subgroup of $M$
\begin{equation*}
a ^ {-\mathbf n _ i} m _ 1 a ^ {\mathbf n _ i} \to m_{[\alpha]}
.\end{equation*}
Since $G$ is closed, and both $m _ 1$ and the $a ^ {\mathbf n _ i}$ are in $G$ we conclude that $m_{[\alpha]} \in G$, and the lemma follows.
\end{proof}

The basic phenomena underlying the product structure, which is a slight variation on \cite[Prop.~5.1]{Einsiedler-Katok} and \cite[Prop.~8.5]{Einsiedler-Lindenstrauss-Clay}, is the following:

\begin{Theorem}\label{product structure theorem}
Let $J \subset \Z ^ r $, $ U _ J$, and $[\Phi] _ J$ be as above. Suppose $[\alpha] \in [\Phi] _ J$ is exposed in $U_J$, and set $J'$ as above. Then there is a set of full measure
$X ' \subset X$ so that if $x, u.x \in X '$ for $u=u_{[\alpha]}u' \in U _ J$ (with $u_{[a]} \in U _ {[\alpha]}$ and $u ' \in U _ {J '}$) one has that
\begin{equation*}
\mu _ x ^ {[\alpha]} = \mu _ {u.x} ^ {[\alpha]} u_{[\alpha]}
.\end{equation*}
In particular, for any $[\beta] \in [\Phi] _ J \setminus \{[\alpha]\}$, if $x, u.x \in X '$ for $u \in U_{[\beta]}$ then \[\mu _ x ^ {[\alpha]} = \mu _ {u.x} ^ {[\alpha]}.\]
\end{Theorem}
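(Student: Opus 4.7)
The plan is to reduce the statement to a $U_{J'}$-invariance assertion about the $[\alpha]$-leafwise measures, and then establish that invariance via a contraction/recurrence argument using a suitable element $a^{\mathbf n_0}$ that contracts $U_{J'}$ without contracting $U_{[\alpha]}$. For the reduction, Proposition~\ref{proposition defining leafwise measures}(1) applied to $U_{[\alpha]}$ furnishes a conull $X_0 \subset X$ with $\mu_{h.x}^{[\alpha]} = \mu_x^{[\alpha]}h^{-1}$ for $h \in U_{[\alpha]}$ and $x, h.x \in X_0$. Using the factorization $u = u_{[\alpha]}u'$ from the preceding lemma, the claim $\mu_x^{[\alpha]} = \mu_{u.x}^{[\alpha]}u_{[\alpha]}$ applied at the intermediate point $u'.x$ reduces to $\mu_{u'.x}^{[\alpha]} = \mu_x^{[\alpha]}$; hence it suffices to find a conull $X' \subset X_0$ on which $U_{J'}$-translation between $X'$-points leaves the $[\alpha]$-leafwise measure unchanged. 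The ``in particular'' statement then corresponds to the special case $u_{[\alpha]} = 1$, since $U_{[\beta]} \subset U_{J'}$ for every $[\beta] \in [\Phi]_J \setminus \{[\alpha]\}$.

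Set $\mathbf n_0 = -\mathbf j_\alpha$. By exposedness of $[\alpha]$, $\beta(\mathbf n_0) < 0$ for every $[\beta] \in [\Phi]_J \setminus \{[\alpha]\}$, so $v_\ell := a^{\ell\mathbf n_0}u'a^{-\ell\mathbf n_0} \to 1$ in $U_{J'}$ for any fixed $u' \in U_{J'}$; simultaneously $\alpha(\mathbf n_0) \geq 0$, so conjugation by $a^{\mathbf n_0}$ is non-contracting on $U_{[\alpha]}$. Writing $z_\ell := a^{\ell\mathbf n_0}.x$ and applying Proposition~\ref{transformation rule for leafwise measures},
\[
\mu_{z_\ell}^{[\alpha]} = a^{\ell\mathbf n_0}\,\mu_x^{[\alpha]}\,a^{-\ell\mathbf n_0}, \qquad \mu_{v_\ell.z_\ell}^{[\alpha]} = a^{\ell\mathbf n_0}\,\mu_{u'.x}^{[\alpha]}\,a^{-\ell\mathbf n_0},
\]
so $\mu_x^{[\alpha]} = \mu_{u'.x}^{[\alpha]}$ is equivalent, for every $\ell$, to coincidence of the two leafwise measures at the pair of nearby points $z_\ell$ and $v_\ell.z_\ell$. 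The strategy is to let $\ell \to \infty$, where $v_\ell.z_\ell$ becomes arbitrarily close to $z_\ell$ in $X$, and to upgrade this proximity to equality of leafwise measures by continuity.

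To arrange this, apply Luzin's theorem to the Borel map $y \mapsto \mu_y^{[\alpha]}$ into the compact metrizable space $PM^*_\infty(U_{[\alpha]})$, producing a compact $K_\epsilon$ with $\mu(K_\epsilon) > 1 - \epsilon$ on which the map is uniformly continuous. Birkhoff recurrence applied to $a^{\mathbf n_0}$ yields, for $\mu$-a.e.~$x$, a density-$\geq 1-2\epsilon$ set of $\ell$'s with $z_\ell \in K_\epsilon$. Provided one also has $v_\ell.z_\ell \in K_\epsilon$ along an infinite subsequence $\ell_k$, uniform continuity combined with $v_{\ell_k} \to 1$ forces $\mu_{z_{\ell_k}}^{[\alpha]}$ and $\mu_{v_{\ell_k}.z_{\ell_k}}^{[\alpha]}$ to coincide in the limit in $PM^*_\infty(U_{[\alpha]})$; by the display above this yields $\mu_x^{[\alpha]} = \mu_{u'.x}^{[\alpha]}$ (using that conjugation by $a^{\ell_k \mathbf n_0}$ is an automorphism of $U_{[\alpha]}$, and that in the non-trivial case $\alpha(\mathbf n_0) > 0$ the un-conjugation translates agreement on bounded test windows at the pushed scale into agreement on arbitrarily large windows at the original scale).

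The main obstacle is precisely the simultaneous-recurrence step: because $\mu$ is not assumed $U_{J'}$-quasi-invariant, $u'.x$ does not automatically inherit the generic return properties of $x$, and one must also track the distortion of the $PM^*_\infty$-metric under conjugation by $a^{\ell\mathbf n_0}$. The cleanest remedy, following~\cite{Einsiedler-Katok} and the subordinate-$\sigma$-algebra framework of~\cite{Einsiedler-Lindenstrauss-Clay}, is to run the entire comparison inside a countably generated $a^{\mathbf n}$-monotonic $\sigma$-algebra $\mathcal{A}$ subordinate to $U_J$ on a large set $Y$, obtained from Proposition~\ref{monotone subordinate algebra proposition} with $\mathbf n \in J$ expanding all of $U_J$: within a common $\mathcal{A}$-atom, both $x$ and $u'.x$ lie on a single $U_J$-plaque, their conditional measures along $U_{[\alpha]}$ identify via Proposition~\ref{proposition defining leafwise measures}(3), and the joint-recurrence requirement is built into the atom structure by the $a^{\mathbf n}$-monotonicity combined with the factorization $U_J = U_{[\alpha]}U_{J'}$ from the preceding lemma, thereby closing the argument.
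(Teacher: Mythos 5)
Your high-level outline (Lusin, contract $U_{J'}$, recur, use continuity, un-conjugate) is the right skeleton, but the execution has a genuine gap at exactly the point where the higher-rank structure is essential, and one of the reductions is not valid.

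\textbf{The un-conjugation step is backwards.} You iterate a single element $a^{\mathbf n_0}$ with $\mathbf n_0 = -\mathbf j_\alpha$, so $\alpha(\mathbf n_0) \geq 0$ and $\beta(\mathbf n_0) < 0$ for the other exponents. When $\alpha(\mathbf n_0) > 0$ (which cannot be excluded: $\alpha$ may be an irrational functional, in which case $\alpha(\mathbf n) = 0$ forces $\mathbf n = 0$, so $\alpha(\mathbf j_\alpha) < 0$ strictly), conjugation by $a^{\ell\mathbf n_0}$ \emph{expands} $U_{[\alpha]}$, hence the un-conjugation $\nu \mapsto a^{-\ell\mathbf n_0}\nu a^{\ell\mathbf n_0}$ \emph{contracts}: a bounded test window at the pushed scale $z_\ell$ corresponds to a window at $x$ of radius $\sim e^{-\ell\alpha(\mathbf n_0)}$, which shrinks. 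Your parenthetical claim that this produces ``agreement on arbitrarily large windows at the original scale'' has the direction reversed; what you actually get is agreement only on a shrinking neighborhood of the identity, which is useless. More abstractly, the conjugations $a^{\ell\mathbf n_0}(\cdot)a^{-\ell\mathbf n_0}$ do not act equicontinuously on $PM^*_\infty(U_{[\alpha]})$ when $\alpha(\mathbf n_0) > 0$, so $d(\nu_\ell, \nu'_\ell) \to 0$ gives no control over $d(\mu_x^{[\alpha]}, \mu_{u'.x}^{[\alpha]})$. This is precisely why the paper does \emph{not} iterate a single element: it averages over the tempered slab $F_{T,R} = \{\mathbf n : |\mathbf n| < T,\ |\alpha(\mathbf n)| < R,\ \beta(\mathbf n) > 0\ \forall \beta \in [\Phi]_{J'}\}$, so that the selected $\mathbf n_T$ has $|\mathbf n_T| \sim T$ (forcing $\beta(\mathbf n_T) \to -\infty$ for other $\beta$) while $|\alpha(\mathbf n_T)| < R$ stays bounded, keeping the $U_{[\alpha]}$-conjugations equicontinuous. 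That slab is a genuinely $(r-1)$-dimensional object; its existence and temperedness is the higher-rank input, and it is why the maximal ergodic theorem for amenable groups (rather than Birkhoff for a single $a^{\mathbf n_0}$) is invoked.

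\textbf{Two further issues.} First, the reduction ``$\mu_x^{[\alpha]} = \mu_{u.x}^{[\alpha]}u_{[\alpha]}$ reduces via $u'.x$ to $\mu_{u'.x}^{[\alpha]} = \mu_x^{[\alpha]}$'' requires the intermediate point $u'.x$ to lie in the conull set $X_0$ from Proposition~\ref{proposition defining leafwise measures}(1); the hypothesis only gives $x, u.x \in X'$, and $u'.x$ may well be in the null set, so the factorization cannot be applied pointwise. The paper factors $u$ only \emph{after} conjugation, writing $a^{\mathbf n_T}ua^{-\mathbf n_T} = (a^{\mathbf n_T}u_{[\alpha]}a^{-\mathbf n_T})(a^{\mathbf n_T}u'a^{-\mathbf n_T})$ and letting the second factor go to the identity; the intermediate point never needs to be controlled. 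Second, your proposed remedy for the simultaneous-recurrence problem via the $a^{\mathbf n}$-monotone $\sigma$-algebra subordinate to $U_J$ is circular as stated: passing from the conditional measure on a $U_J$-plaque to its $U_{[\alpha]}$-component is itself an instance of the product structure you are trying to prove, and Proposition~\ref{proposition defining leafwise measures}(3) only identifies the conditional measure along the \emph{same} group as the $\sigma$-algebra. The maximal inequality route in the paper cleanly handles joint recurrence: bounding $M[1_{X\setminus X_\epsilon}]$ at both $x$ and $u.x$ by $1/8$ forces a positive-proportion set of good $\mathbf n \in F_{T,R}$.
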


In order to prove Theorem~\ref{product structure theorem}, we should make use of the system $F_{T,R}$ of subsets of $\Z ^ r$ defined as follows:
\begin{equation}\label{equation defining slices}
F _ {T, R} =
\left\{ \mathbf n \in \Z ^ r: \absolute{\mathbf n} < T,
\absolute {\alpha (\mathbf n)} < R,
\text{and $\beta (\mathbf n) > 0$ for all $\beta \in [\Phi] _ {J '}$}
 \right\}
.\end{equation}

We recall from \cite{Lindenstrauss-pointwise-theorems} that a systems of subsets $\left\{ F_T \right\}_{T \geq T_0}$ of a discrete group $\Lambda$ is said to be \emph{tempered} if there is some $C$ so that for every $T$
\begin{equation}\label{tempered equation}
\absolute {\left (\bigcup_ {T ' < T} F _ {T'} ^{-1} \right) F _ T} < C \absolute {F _ T}.
\end{equation}

We leave the verification of the following easy lemma to the reader:
\begin{Lemma}\label{easy Folner lemma} For any fixed $R > 0$, if $T _ 0$ is large enough:
\begin{enumerate}
\item
The collection $\left\{ F _ {T, R} \right\}_{T \geq T_0}$ defined in \eqref{equation defining slices} is tempered.
\item
There is a $c_R > 0$ so that for $T \geq T _ 0$
\begin{equation*}
c_RT^{r-1} \leq \absolute {F _ {T, R}} \leq 1.01 c_R T ^ {r -1}
.\end{equation*}
\end{enumerate}
\end{Lemma}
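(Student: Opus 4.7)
The plan is to reduce both parts to elementary lattice-point counting in a bounded open convex region of $\R^r$. I would write
\begin{equation*}
F_{T,R} = \Z^r \cap B_T \cap S_R \cap C_{J'},
\end{equation*}
where $B_T = \{\mathbf n \in \R^r : |\mathbf n| < T\}$ is the open Euclidean ball, $S_R = \{\mathbf n : |\alpha(\mathbf n)| < R\}$ is the slab of width $2R/\|\alpha\|$ around the hyperplane $H = \{\alpha = 0\}$, and $C_{J'} = \{\mathbf n \in \R^r : \beta(\mathbf n) > 0 \text{ for all } [\beta] \in [\Phi]_{J'}\}$ is a nonempty open convex cone. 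Both claims then follow from understanding the geometry of this region.

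The key geometric fact I would establish first is that $C_{J'} \cap H$ has nonempty interior inside $H$; this is exactly where the exposedness of $[\alpha]$ in $U_J$ is used. Every $\mathbf n \in J$ lies in $C_J \subseteq C_{J'}$ and satisfies $\alpha(\mathbf n) > 0$ (since $[\alpha] \in [\Phi]_J$), while $\mathbf j_\alpha \in C_{J'}$ has $\alpha(\mathbf j_\alpha) \leq 0$ by the definition of exposed. Because $C_{J'}$ is open and convex, the linear functional $\alpha$ must vanish on a relatively open subset of $C_{J'}$, yielding an $(r-1)$-dimensional open subset of $H \cap C_{J'}$. Consequently $\vol(B_T \cap S_R \cap C_{J'})$ grows like $c_R T^{r-1}$ as $T \to \infty$, with $c_R$ equal to the product of the $(r-1)$-dimensional volume of the slice $B_1 \cap H \cap C_{J'}$ and the slab width $2R/\|\alpha\|$.

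For (2), I would invoke the standard estimate that for a convex body $K \subset \R^r$, $|\Z^r \cap K|$ equals $\vol(K)$ up to an error of the order of the $(r-1)$-dimensional surface area of $K$. In our case the main term is of order $T^{r-1}$ while the surface area is of order $T^{r-2}$, so $|F_{T,R}| = (1 + o(1)) c_R T^{r-1}$. Absorbing the $o(1)$ into the constant and taking $T_0$ large yields the two-sided bound $c_R T^{r-1} \leq |F_{T,R}| \leq 1.01\, c_R T^{r-1}$.

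For (1), the sets $F_{T,R}$ are nested in $T$, so $\bigcup_{T' < T} F_{T',R}^{-1} = -F_{T,R}$ in additive notation on $\Z^r$, and hence
\begin{equation*}
\left(\bigcup_{T' < T} F_{T',R}^{-1}\right) F_{T,R} \;\subseteq\; F_{T,R} - F_{T,R} \;\subseteq\; B_{2T} \cap S_{2R}.
\end{equation*}
The same cylinder estimate applied to $B_{2T} \cap S_{2R}$ bounds its lattice-point count by $O(T^{r-1})$, which combined with the lower bound on $|F_{T,R}|$ from (2) yields a uniform temperedness constant. The only step with real content is the opening geometric observation that exposedness forces $C_{J'}$ to straddle the hyperplane $H$; the rest is routine convex-body and lattice-point bookkeeping.
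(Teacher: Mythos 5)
The opening geometric reduction is correct and identifies the genuine content: using exposedness of $[\alpha]$ you show that the open convex cone $C_{J'}$ meets the hyperplane $H=\ker\alpha$ in a nonempty relatively open set (since $C_{J'}$ contains both $\mathbf j_\alpha$, where $\alpha\leq 0$, and any $\mathbf n\in J$, where $\alpha>0$). The volume asymptotic $\vol(B_T\cap S_R\cap C_{J'})\sim c_R T^{r-1}$ also follows correctly from this. The paper itself leaves this lemma as an exercise, so there is no paper proof to compare against, but as written your argument has a real gap in the passage from volume to lattice-point count.

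The problem is the claim that the surface area of $K_T=B_T\cap S_R\cap C_{J'}$ is of order $T^{r-2}$. It is not. The boundary $\partial K_T$ includes the two flat faces $\{\alpha=\pm R\}\cap B_T\cap \overline{C_{J'}}$, each of which is an $(r-1)$-dimensional set of diameter $\sim T$, hence of $(r-1)$-dimensional area $\sim T^{r-1}$, the same order as $\vol(K_T)$. (The pieces of $\partial K_T$ on the sphere and on $\partial C_{J'}$ are indeed of order $T^{r-2}$, but they are not the dominant contribution.) Thus the crude convex-body estimate only gives $\bigl|\,|\Z^r\cap K_T|-\vol(K_T)\,\bigr|=O(T^{r-1})$, which is useless here. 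Worse, the asymptotic constant genuinely can differ from the volume constant when $\ker\alpha$ is rational: take $r=2$, $\alpha(n_1,n_2)=n_1$, $R=1$, $C_{J'}=\{n_2>0\}$; then $F_{T,R}=\{(0,n_2):0<n_2<T\}$, so $|F_{T,R}|\sim T$ while $\vol(K_T)\sim 2T$. So the estimate $|F_{T,R}|=(1+o(1))\vol(K_T)$ is simply false, and the lemma cannot be reduced to a volume computation plus a small boundary error.

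What is actually needed, and what makes the lemma true, is a fibered count that is sensitive to where the lattice sits relative to the slab. Pick a primitive $\mathbf e\in\Z^r$ with $\alpha(\mathbf e)\neq 0$, complete it to a basis $\mathbf e,\mathbf f_1,\dots,\mathbf f_{r-1}$ of $\Z^r$, and for each $\mathbf m\in\Z\mathbf f_1+\dots+\Z\mathbf f_{r-1}$ note that $\{k\in\Z:k\mathbf e+\mathbf m\in K_T\}$ is the set of integers in an interval of length at most $2R/|\alpha(\mathbf e)|$. The number of integers in that interval is $\lfloor\cdot\rfloor$ or $\lceil\cdot\rceil$ of its length, depending on a fractional part that is a linear function of $\mathbf m$; summing over the $\sim T^{r-1}$ relevant values of $\mathbf m$ and invoking Weyl equidistribution (when $\alpha$ is irrational) or periodicity (when $\alpha$ is rational) shows that the total is $\sim c\,T^{r-1}$ for some $c>0$, which is what (2) actually asserts. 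Once (2) is in hand, your argument for (1) is fine: $F_{T,R}-F_{T,R}\subset B_{2T}\cap S_{2R}$, and the coarse upper bound $|\Z^r\cap B_{2T}\cap S_{2R}|=O(T^{r-1})$ requires only $|\Z^r\cap K|\leq\vol(K+[0,1]^r)$, which does not need any refinement of the boundary term.
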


We will make use of the following maximal ergodic theorem: 

\begin{Theorem} [{\cite[Thm.~3.2]{Lindenstrauss-pointwise-theorems}}]\label{maximal ergodic theorem}
Let $\Lambda$ be a countable amenable group acting in a measure preserving way on a measure space $(X, \mu)$, and let $\left\{ F _ T \right\} _ {T \geq T _ 0}$ be a tempered sequence of subsets of $\Lambda$. Let $M [f] (x)$ denotes the maximal function
\begin{equation}\label{maximal function definition}
M [f] (x) = \sup_ {T \geq T _ 0} \frac {1 }{ \absolute {F _ T}} \sum_ {h \in F _ T} \absolute {f(hx)}
.\end{equation}
Then there is a constant $C _ 1$ depending only on the constant $C$ in \eqref{tempered equation} so that for any $f \in L ^ 1 _ \mu (X)$,
\begin{equation}\label{maximal inequality}
\mu \left\{ x: M [f] (x) > \lambda \right\} \leq C _ 1 \lambda ^{-1} \norm f _ 1
.\end{equation}
\end{Theorem}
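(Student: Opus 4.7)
My plan is to prove the maximal inequality via a Vitali-type covering argument adapted to the tempered Følner sequence $\{F_T\}$, combined with a standard transference that moves the group-theoretic covering on $\Lambda$ to the dynamical maximal function on $X$. The covering step is best carried out via a randomised selection procedure, which is Lindenstrauss's device for replacing the deterministic Vitali argument that works only in doubling/Euclidean geometries.

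The first step would be a covering lemma on $\Lambda$: for any finite collection $\{(h_\alpha, T_\alpha)\}_{\alpha \in A}$ in $\Lambda \times [T_0,\infty)$, build a random subset $\mathcal{S} \subseteq A$ such that the translates $\{F_{T_\alpha} h_\alpha : \alpha \in \mathcal{S}\}$ are pairwise disjoint and
\begin{equation*}
\mathbf{E}\,\Bigl|\bigsqcup_{\alpha \in \mathcal{S}} F_{T_\alpha} h_\alpha\Bigr| \;\geq\; \frac{1}{C_1}\,\Bigl|\bigcup_{\alpha \in A} F_{T_\alpha} h_\alpha\Bigr|,
\end{equation*}
for a constant $C_1$ depending only on the temperedness constant $C$ of \eqref{tempered equation}. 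The construction assigns each index $\alpha$ an independent uniform rank in $[0,1]$ and declares $\alpha \in \mathcal{S}$ precisely when $F_{T_\alpha} h_\alpha$ is disjoint from every $F_{T_\beta} h_\beta$ of strictly smaller rank. The probability that a given point $y$ lies in some kept ball is bounded below by controlling the set of "competitors" $\beta$ that could displace $\alpha$: any such $\beta$ satisfies $h_\beta \in F_{T_\beta}^{-1} F_{T_\alpha} h_\alpha$, and the left shadow $\bigl(\bigcup_{T' \leq T_\alpha} F_{T'}^{-1}\bigr) F_{T_\alpha}$ is bounded in cardinality by $C |F_{T_\alpha}|$ thanks to \eqref{tempered equation}. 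An averaging over the random ranks then yields the claimed expected-coverage inequality.

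The second step would be to deduce the maximal inequality by transference. For each $x \in E_\lambda := \{x : M[f](x) > \lambda\}$ select $T(x) \geq T_0$ with $\sum_{h \in F_{T(x)}} |f(hx)| > \lambda |F_{T(x)}|$. Apply the covering lemma to the pairs $(h,\,T(h\,x_0))$ as $h$ ranges over a large Følner set in $\Lambda$ and $x_0 \in X$ is a base point; pass to a realisation of $\mathcal{S}$ for which the covering inequality holds. Summing the witnessing inequalities over the selected disjoint family, using measure preservation together with Fubini to average over $x_0$ and letting the ambient Følner set exhaust $\Lambda$, one obtains
\begin{equation*}
\lambda\,\mu(E_\lambda) \;\leq\; C_1 \int_X |f(x)| \, d\mu(x),
\end{equation*}
which is \eqref{maximal inequality}.

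The main obstacle is the covering lemma itself. In a non-commutative amenable group the sets $F_T$ need not be nested, approximately symmetric, or of controlled doubling, and the naive deterministic greedy Vitali procedure can lose an unbounded factor whenever two balls of comparable size overlap only on a small subset, since the leftover of the discarded ball is not contained in any bounded enlargement of the retained one. The randomised selection averages out these pathological overlaps, and temperedness \eqref{tempered equation} is exactly the hypothesis needed to ensure that each point is retained with probability at least $1/C_1$. Verifying this probability bound --- a careful integration against the random ranks combined with the shadow estimate above --- is where the technical heart of the argument lies; once it is in hand, the transference to the dynamical setting is essentially routine.
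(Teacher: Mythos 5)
The paper does not prove this theorem; it is quoted verbatim from Lindenstrauss's \emph{Pointwise theorems for amenable groups} (reference~\cite{Lindenstrauss-pointwise-theorems}), so there is no in-paper proof to compare against. That said, your strategy --- a probabilistic Vitali-type covering lemma on the group $\Lambda$ followed by a Calder\'on-style transference --- is indeed the strategy of the cited source, so the high-level route is the right one.

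However, the covering lemma as you describe it has a genuine gap, and you have located (but not filled) the hard part. You declare $\alpha \in \mathcal{S}$ exactly when $F_{T_\alpha} h_\alpha$ is disjoint from every $F_{T_\beta} h_\beta$ of strictly smaller rank, and you then try to bound the number of competitors of $\alpha$ by the temperedness inequality. But temperedness controls only $\bigl|\bigl(\bigcup_{T' < T_\alpha} F_{T'}^{-1}\bigr) F_{T_\alpha}\bigr|$; it says nothing about the number of $\beta$ with $T_\beta > T_\alpha$ whose translate $F_{T_\beta} h_\beta$ meets $F_{T_\alpha} h_\alpha$. Since the $F_T$ need not be nested, such large-scale neighbours can be arbitrarily numerous, and each of them can outrank $\alpha$ and kill it, so the single-round random-rank greedy does not yield a probability of retention bounded below by a constant. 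The actual argument in the cited paper resolves this by processing the scales in decreasing order and running a fresh random selection at each scale, only admitting candidates not already blocked by selections at larger scales; this is precisely what makes the one-sided temperedness hypothesis sufficient. Without this ordered, multi-round structure, the passage from ``shadow estimate'' to ``each point is retained with probability at least $1/C_1$'' does not go through, and since you flag this verification as ``the technical heart of the argument'' without carrying it out, the proposal as written is incomplete at exactly the point where the proof is nontrivial. The transference step you outline is standard once the covering lemma is in hand.
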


\noindent
Note that $\left\{ F _ T \right\}$ does not need to be a F\o lner sequence for Theorem~\ref{maximal ergodic theorem} to hold.

\begin{proof} [Proof of Theorem~\ref{product structure theorem}]
Let $\epsilon > 0$ be arbitrary. By Lusin's Theorem, there is a compact subset $X _ \epsilon \subset X$ with $\mu (X _ \epsilon) > 1 - \epsilon$ so that the map $x \mapsto \mu _ x ^ {[\alpha]}$ is continuous on $X _ \epsilon$. We may also assume that the subset $X _ \epsilon$ satisfy that if $x, u . x \in X _ \epsilon$ for $u \in U _ {[\alpha]}$ then $\mu _ x ^ {[\alpha]} = \mu _ {u . x} ^ {[\alpha]} u$ and that moreover for every $\mathbf n$
\[
\mu _ {a^{\mathbf n}x} ^ {[\alpha]} = a^{\mathbf n}\mu_x^{[\alpha]}a^{-\mathbf n}.
\]

Fix some $R>0$, and let $M[f]$ denote, for $f \in L ^ 1 (\mu)$, the maximal function for $f$ with respect to averaging on the subsets $\left\{ F _ {T,R} \right\} _ {T \geq T _ 0}$ as in Lemma~\ref{easy Folner lemma}. Let $C _ 1$ be as in \eqref{maximal inequality} for this sequence.
Let
\begin{equation*}
X _ \epsilon ' = X _ \epsilon \cap \left\{ x: M [1 _{X \setminus X _ \epsilon}] (x)\leq 1/8 \right\}
.\end{equation*}
By Theorem~\ref{maximal ergodic theorem}, it follows that
$\mu (X ' _ \epsilon)  \geq 1-(8 C_1 +1) \epsilon$.
Suppose now that $x, u.x \in X ' _ \epsilon$ with $u=u _ {[\alpha]} u ' \in U _ J$.
Then for every $T > 2T _ 0$, we have that the cardinality of $\mathbf n \in F_{T,R}$ for which at least one of $a ^ \mathbf n x, a ^ \mathbf n (ux)$ is not in $X _ \epsilon$ is at most $ \absolute {F_{T,R}}/4$.
Recall that by (2) of Lemma~\ref{easy Folner lemma}, the cardinality of $F_{T,R} \cap \left\{\absolute {\mathbf n} < T/2 \right\} = F_{T/2,R}$ is $\leq 1.01 \cdot 2^{-r+1} \absolute {F_{T,R}}$. Hence for any $T$ large enough there is a $\mathbf n_T \in \Z ^ r$ for which
\begin{enumerate}
\item $\absolute {\alpha (\mathbf n_T)} < R$
\item $a ^ {\mathbf n_T} x, a ^ {\mathbf n_T} (ux) \in X _ \epsilon$
\item for every $\beta \in \Phi$ for which $[\beta] \in [\Phi] _ J \setminus \{[\alpha]\}$ we have that $\beta (\mathbf n_T) < -c T$ for some $c$ independent of $T$
.\end{enumerate}
Set $x_T = a ^ {\mathbf n_{T}}x$, \ $x'_T = a ^ {\mathbf n_{T}}ux$, and suppose $T_j \to \infty$ is such that $(x_{T_j},x'_{T_j})$ converges to say $(x _ \infty, x ' _ \infty)$. Then as $X _ \epsilon$ is compact, $x _ \infty, x ' _ \infty \in X _ \epsilon$.
By assumption, conjugation by both $ a ^ {\pm \mathbf n _ T}$ on $U _ {[\alpha]}$ is an equicontinuous sequence of maps, and on the other hand conjugation by $ a ^ {\mathbf n _ T}$contracts $U _ {J '}$. It follows that w.l.o.g.\ $ a ^ {\mathbf n _ T} u_{[\alpha]} a ^ {- \mathbf n _ T}$ converges along the subsequence $T_j$ to some nontrivial element $\tilde u _ {[\alpha]} \in U _ {[\alpha]}$ while $ a ^ {\mathbf n _ T} u' a ^ {- \mathbf n _ T} \to 1$, hence $x' _ \infty = \tilde u _ {[\alpha]} x _ \infty$. Since $x ' _ \infty, x _ \infty \in X _ \epsilon$ we may conclude that $\mu _ {x _ \infty} ^ {[\alpha]} = \mu _ {x ' _ \infty} ^ {[\alpha]} \tilde u _ {[\alpha]}$.
By continuity of the map $\mu \mapsto \mu _ \mu ^ {[\alpha]}$ on $X _ U$ it follows that as $j \to \infty$ the pairs of proportionality class of measures
\begin{equation*}
\mu _ {a^{\mathbf n _ {T_{\smash j}}}x} ^ {[\alpha]},\quad \mu _ {a^{\mathbf n _ {T_{\smash j}}}x} ^ {[\alpha]} \left (a^{\mathbf n _ {T_{\smash j}}} u_{[\alpha]}a^{-\mathbf n _ {T_{\smash j}}} \right)
\end{equation*}
becomes increasingly similar, hence by equicontinuity of the conjugation by $a^{\pm \mathbf n _ {T}}x$ on $U_{[\alpha]}$
\begin{equation*}
\mu _ x ^ {[\alpha]} = a^{-\mathbf n _ {T_{\smash j}}}\mu _ {a^{\mathbf n _ {T_{\smash j}}}x} ^ {[\alpha]}a^{\mathbf n _ {T_{\smash j}}} \approx \left(a^{-\mathbf n _ {T_{\smash j}}}\mu _ {a^{\mathbf n _ {T_{\smash j}}}ux} ^ {[\alpha]}a^{\mathbf n _ {T_{\smash j}}}\right)u_{[\alpha]} = \mu _ {ux} ^ {[\alpha]}u_{[\alpha]}
\end{equation*}
with the approximation in the middle of the above displayed equation becoming increasingly better as $j \to \infty$. It follows that $\mu _ x ^ {[\alpha]}=\mu _ {ux} ^ {[\alpha]}u_{[\alpha]}$. Taking $X'=\bigcup_{\epsilon} X'_\epsilon$ we obtain the theorem as $\mu(X')=1$.
\end{proof}

\subsection{Behavior of leafwise measures under finite-to-one extensions}

The main result of this subsection is the following:

\begin{Proposition}\label{finite-to-one proposition}
Let $X, X '$ be locally compact, $\pi: X \to X '$ finite-to-one, with a semi-direct product $\langle a \rangle \ltimes U$ acting on both $X$ and $X'$. We assume that $U$ is equipped with a metric $d(\cdot,\cdot)$
inducing the topology on $U$
such that \[cd(u_1,u_2)\leq d(au_1a^{-1},au_2a^{-1})\leq Cd(u_1,u_2)\]
 for some fixed $c,C>1$ and all $u_1,u_2\in U$.
Furthermore we assume that $\pi$ intertwines the action of $\langle a \rangle \ltimes U$ on $X$ and $X '$. 
Let $\mu$ be an $a$-invariant measure on $X$, and let $\mu ' = \pi_*\mu$. Then for $\mu$-a.e. $x \in X$
\begin{equation*}
\mu _ x ^ {U} = (\mu ') ^ {U} _ {\pi (x)}
.\end{equation*}
\end{Proposition}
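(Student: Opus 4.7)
The plan is to mirror the proof of Proposition~\ref{relative leafwise measures proposition}: first to establish that $\mu_x^U$ and $(\mu')^U_{\pi(x)}$ agree as elements of $PM^*_\infty(U)$ on an arbitrarily small $U$-ball around the identity, on a set of $x$ of large measure, and then to globalize via $a$-equivariance and Poincar\'e recurrence together with the expanding bi-Lipschitz hypothesis.

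First I would build compatible subordinate $\sigma$-algebras. Applying Proposition~\ref{monotone subordinate algebra proposition} to $(X',\mu')$ yields an $a$-invariant set $Y'\subset X'$ with $\mu'(Y')>1-\epsilon$ and an $a$-monotone, countably generated $\sigma$-algebra $\mathcal{A}'$ subordinate to $U$ on $Y'$. Since $\pi$ intertwines the $a$-action, the pull-back $\mathcal{A}=\pi^{-1}(\mathcal{A}')$ is $a$-monotone on $X$, but the atoms $[x]_\mathcal{A}=\pi^{-1}([\pi(x)]_{\mathcal{A}'})$ in general decompose into several $U$-plaques, one through each point in the fiber over $\pi(x)$. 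To separate these I would use the finite-to-one hypothesis via a Luzin--Novikov-type measurable partition $X=\bigsqcup_i B_i$ with each $\pi|_{B_i}$ injective: letting $\mathcal{B}$ be the join of $\mathcal{A}$ with $\sigma(\{B_i\})$, further restricted to a compact set on which both $x\mapsto\mu_x^U$ and $x\mapsto(\mu')^U_{\pi(x)}$ are continuous (Luzin's theorem), one obtains a countably generated $\sigma$-algebra weakly subordinate to $U$ in the sense of Lemma~\ref{slightly more general sigma-algebra lemma} on a set $Y_\epsilon$ with $\mu(Y_\epsilon)>1-O(\epsilon)$, whose atoms $[x]_\mathcal{B}$ correspond to open neighborhoods $V_x\subset U$ of the identity uniformly containing a ball $B^U_{\delta(x)}$ for some positive measurable $\delta$.

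Applying Lemma~\ref{slightly more general sigma-algebra lemma} on $(X,\mu,\mathcal{B})$ and Proposition~\ref{proposition defining leafwise measures}(3) on $(X',\mu',\mathcal{A}')$, and using both that $\pi$ is injective on each atom $[x]_\mathcal{B}$ (each lies inside a single $B_i$) and that $\pi_*\mu=\mu'$, I would deduce that $\mu_x^U|_{V_x}$ and $(\mu')^U_{\pi(x)}|_{V_x}$ are proportional for $\mu$-a.e.\ $x\in Y_\epsilon$ --- that is, they agree as elements of $PM^*_\infty(U)$ when restricted to $B^U_{\delta(x)}$. To promote this local equality to a global one I would then apply Poincar\'e recurrence to $a^{-1}$, exactly as in the proof of Proposition~\ref{relative leafwise measures proposition}: for $\mu$-a.e.\ $x$ there is a sequence $n_k\to\infty$ with $a^{-n_k}.x\in Y_\epsilon$ and $\delta(a^{-n_k}.x)\geq\delta_0>0$ along the subsequence, and conjugating by $a^{n_k}$ using Proposition~\ref{transformation rule for leafwise measures} together with the expanding hypothesis $d(au_1a^{-1},au_2a^{-1})\geq c\,d(u_1,u_2)$ with $c>1$ transports the local equality at $a^{-n_k}.x$ to equality of $\mu_x^U$ and $(\mu')^U_{\pi(x)}$ on balls in $U$ of radius $\geq c^{n_k}\delta_0$, hence on all of $U$ in the limit.

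The main obstacle is the first step: the naive pull-back $\pi^{-1}(\mathcal{A}')$ is not subordinate to $U$ because its atoms contain several $U$-plaques, one per preimage of $\pi(x)$, and the finite-to-one hypothesis must be leveraged --- via the Luzin--Novikov partition and some additional bookkeeping on a Luzin set --- to single out the plaque through $x$ and set up the conditional-measure identification of Lemma~\ref{slightly more general sigma-algebra lemma} cleanly, while tracking proportionality constants carefully to ensure the identification yields equality in $PM^*_\infty(U)$ and not merely coincidence up to distinct scales on distinct pieces.
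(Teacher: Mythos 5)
Your overall skeleton is on the right track --- pull back a subordinate $\sigma$-algebra from $X'$, refine by a measurable partition $\{B_i\}$ into sets on which $\pi$ is injective, identify conditional measures, then globalize by Poincar\'e recurrence under $a^{-1}$ --- and you correctly diagnose that the naive pullback $\pi^{-1}(\mathcal{A}')$ has atoms consisting of $p$ separate $U$-plaques. But there is a genuine gap at precisely the step you describe as "using that $\pi_*\mu=\mu'$\ldots I would deduce that $\mu_x^U|_{V_x}$ and $(\mu')^U_{\pi(x)}|_{V_x}$ are proportional."

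The issue is that the pushforward $\pi_*\mu_x^{\mathcal{B}}$ is \emph{not} equal to $(\mu')^{\mathcal{A}'}_{\pi(x)}$ in general. What $\pi_*\mu=\mu'$ gives you is that $(\mu')^{\mathcal{A}'}_{\pi(x)}=\pi_*\mu_x^{\pi^{-1}(\mathcal{A}')}$, and $\mu_x^{\pi^{-1}(\mathcal{A}')}$ lives on the union of \emph{all} $p$ plaques in the atom; it decomposes as a convex combination $\sum_j c_j\,\mu^{\mathcal{B}}_{\phi_j(\pi(x))}$ with $c_j$ the conditional probability of the $j$-th fiber component. Pushing forward thus gives $(\mu')^{\mathcal{A}'}_{\pi(x)}$ as a convex combination of the $\pi_*\mu^{\mathcal{B}}_{\phi_j(\pi(x))}$, not as $\pi_*\mu_x^{\mathcal{B}}$ itself. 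Concluding from this that $\mu_x^U$ agrees locally with $(\mu')^U_{\pi(x)}$ would require knowing that the individual summands are all equal (normalized), which is essentially the conclusion of the proposition restricted to the fibers over the plaque --- the argument is circular as stated. Your remark about "tracking proportionality constants carefully" does not cure this; it is a substantive missing claim, not bookkeeping.

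The paper's proof supplies exactly the missing ingredient: it proves (equation~\eqref{preimages distribution equation}) that the conditional probability vector on $\pi^{-1}(y')$, as a function of $y'$, is \emph{constant along $U$-plaques}, i.e.\ $\mu_{y'}^{\mathcal B'}(\{\phi_i(y')\})=\mu_{u.y'}^{\mathcal B'}(\{\phi_i(u.y')\})$. Once the weights $c_j$ are $\mathcal{A}'$-measurable on a large set, one gets $\pi_*(\mu|_{B_j})=c_j\mu'$ with $c_j$ constant on atoms, from which $\pi_*\mu^{\mathcal B}_{\phi_j(y')}=(\mu')^{\mathcal{A}'}_{y'}$ follows, and then Proposition~\ref{proposition defining leafwise measures}(3) and recurrence finish the proof as you anticipate. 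Crucially, the constancy claim is established by Poincar\'e recurrence under $a^{-1}$ combined with Luzin's theorem applied to the map $y'\mapsto\mu_{y'}^{\mathcal B'}$ (the \emph{fiber} conditionals), returning two nearby points on the same plaque to a common accumulation point. This is a different use of Luzin from the one you propose (continuity of $x\mapsto\mu_x^U$ and $x\mapsto(\mu')^U_{\pi(x)}$), which does not help break the circularity. Your proposal needs to add this recurrence argument for the fiber conditionals before the conditional-measure identification can be made.
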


For simplicity, we assume that the cardinality of the fibers $\pi ^{-1} (x ')$ are the same, say $p$, for all $x ' \in X '$.
This proposition can also be viewed as a special case of the product structure of leafwise measures (cf.~\S\ref{product structure section}): in this case between the conditional measure $\mu$ induces on inverse images $\pi ^{-1} (x ')$ and the leafwise orbits on $U$-orbits.

\begin{proof}
Let $\mathcal B$ denote the Borel $\sigma$-algebra on $X$, and $\mathcal B'$ the Borel $\sigma$-algebra on $X '$ which we identify with the corresponding sub $\sigma$-algebra of $\mathcal B$.
The system of conditional measures $\mu _ x ^ {\mathcal B '}$ can be considered as a measurable map from $X '$ to probability measures on finite subsets of cardinality $p$ of $X$.

Let $\epsilon >0$ be arbitrary, and let $X' _ \epsilon \subset X'$ be a compact subset with $\mu' (X' _ \epsilon) > 1 - \epsilon$ on which the map $x \mapsto \mu _ x ^ {\mathcal B '}$ is continuous.
Let
\begin{equation*}
\bar X' _ \epsilon = \left\{ x' \in X'_\epsilon: \inf_ {N\geq 1} \frac 1N \sum_ {n=0} ^ {N-1} 1_{X _ \epsilon} (a^{-n}.x') \geq 0.9 \right\};
\end{equation*}
by the maximal ergodic theorem, $\mu' (\bar X' _ \epsilon) \geq 1 -10 \epsilon$.

Let $x' \in X '$ and choose $\delta > 0$ and a small open set $B'$ around $x'$ so that:
\begin{enumerate}
\item $\pi ^{-1} (B ') = \bigsqcup_ {i=1} ^ p B_i$, and for each $y' \in B'$ we have that $\absolute {\pi ^{-1} (y ') \cap B _ i} = 1$.
 
\item if $\{x_i\} = B_i \cap \pi^{-1}(x')$,\ $B_\delta^U.x_i  \subset B_i$
\item if $y_i \in B_i$, and $u.y_i \in B_j$ for $u \in U, i\neq j$ then $d(u,1)>100\delta$.
\end{enumerate}
It would be convenient to denote by  $\phi _ i (y ')$ the unique point in $\pi ^{-1} (y ') \cap B _ i$  for $y ' \in B '$.
We claim that if both $y', u.y ' \in B ' \cap \bar X ' _ \epsilon$ for $u \in B^U_\delta$ then
\begin{equation}\label{preimages distribution equation}
\mu _ {y '} ^ {\mathcal B '} (\left\{ \phi _ i (y ') \right\}) = \mu _ {u.y '} ^ {\mathcal B '} (\left\{ \phi _ i (u.y ') \right\}) \qquad i = 1, \dots, p
.\end{equation}
Indeed, since $y', u.y ' \in B ' \cap \bar X ' _ \epsilon$ it follows that there is a subsequence $n _ j \to \infty$ so that both $a ^ {-n_j}.y' \in X'_\epsilon$ and $a ^ {-n_j}.(u.y) \in X'_\epsilon$; moreover since $X'_\epsilon$ is compact we may assume that $y'' = \lim_ {j \to \infty} a ^ {-n_j}.y' = \lim_ {j \to \infty} a ^ {-n_j}.(u.y')$ exists. By continuity of~$\mu _ {\bullet} ^ {\mathcal B '}$ on $X ' _ \epsilon$ it follows that
\begin{equation*}
\mu _ {a ^ {-n_j}.y '} ^ {\mathcal B '}, \ \mu _ {a ^ {-n_j}.(u.y ')} ^ {\mathcal B '} \to \mu _ {y''} ^ {\mathcal B '}
.\end{equation*}
However, by $a$-invariance of $\mu$, the probability vector on the $p$-preimages of a point $z' \in X'$ given by $\mu _ {z '} ^ {\mathcal B '}$ is the same as the probability vector given by $\mu _ {a.y '} ^ {\mathcal B '}$ on the preimages of $a.z'$. This implies that up to permuting the indices on the right-hand side of \eqref{preimages distribution equation}, this equation holds; in view of property~(3) of~$B'$ necessarily this permutation has to be the identity permutation.

Let $\mathcal{A}'_1$ be a $\sigma$-algebra on $X'$ subordinate to $U$ on a set~$Y'$ of measure $\geq 1 - \epsilon_1$.  Refining $\mathcal{A} ' _ 1$ with a finite algebra of sets generated by open balls with $\mu$-null boundaries, we may assume that for every $y' \in B' \cap Y '$ we have that $[y']_{\mathcal{A} ' _ 1} \subset B _ \delta ^ U .y'$.
Now set
\begin{align*}
\mathcal{A} ' & = \left\{ X ' \setminus B '\right \} \cup \left \{ C' \cap B': C' \in \mathcal A_1 ' \right\} \\
\mathcal{A}& = \left\{ X\phantom{'} \setminus B\phantom{'} \right \} \cup \left \{ \pi ^{-1} (C') \cap B _ i: C' \in \mathcal{A} ', 1 \leq i \leq p \right\}
.\end{align*}
For $y' \in B'$, we have that $[y']_{\cA'} = [y']_{\cA'_1} \cap B'$, hence since $B'$ is open, $\mathcal{A} '$ is subordinate to $U$ on $Y' \cap B'$, and by the assumption we made on the atoms $[y']_{\mathcal{A} ' _ 1}$ and property~(3) of~$B'$ we have that if $y' \in B' \cap Y '$ and if we define $C \subset U$ by $[y']_{\cA'} = C.y$ then 
\[
[\phi_i(y')]_{\cA} = C.\phi_i(y') \subset B_i
\]
hence $\cA$ is subordinate to $U$ on $\pi^{-1}(Y' \cap B')$.

Letting $\epsilon \to 0$ we see that \eqref{preimages distribution equation} holds a.e. on $X '$, hence for $y ' \in Y ' \cap B '$ if $\rho$ is the probability measure on $B _ \delta ^ U$ defined by
\begin{equation*}
(\mu') _ {y'} ^ {\cA'} = \rho. y'
\end{equation*}
then
\begin{equation*}
\mu _ {\phi _ i (y')} ^ {\cA}= \rho . \phi _ i (y') \qquad 1 \leq i \leq p
.\end{equation*}
By Proposition~\ref{proposition defining leafwise measures} part~(3), it follows that for a sufficiently small $\delta'$ (possibly depending on $y'$),
\begin{equation*}
\left . (\mu') _ {y'}^U \right| _ {B^U_{\delta'}}= \left . \mu _ {\phi_i(y')}^U \right| _ {B^U_{\delta'}} \qquad 1 \leq i \leq p
\end{equation*}
and hence since $a$ expands $U$ and preserves $\mu$ the proposition follows by Poincar\'e recurrence (cf. Proposition~\ref{transformation rule for leafwise measures}).
\end{proof}

\section{Proof of Theorem~\ref{symmetry theorem}}
Let $[\alpha]$ be a coarse Lyapunov exponent, considered fixed in this section. The key to the proof of Theorem~\ref{symmetry theorem} is a careful comparison between the entropy contributions $D _ \mu (\mathbf n, [\beta])$ and the relative entropy contributions $D _ \mu ^ \mathcal{A} (\mathbf n, [\beta])$ for a specific choice of $\mathcal{A}$, namely the $\sigma$-algebra $\mathcal{A}$ 
corresponding to the Borel map $x \mapsto \mu _ x ^ {[\alpha]}$.
To be precise, we take $\mathcal{A}$ to be the preimage of the Borel $\sigma$-algebra
under the map $x\in X\mapsto \mu _ x ^ {[\alpha]}$, where the leafwise measure
is considered as an element of the space of measures $PM^*_\infty (U_{[\alpha]})$ up to proportionality.
We will fix this $\sigma$-algebra throughout the section.

By Theorem~\ref{product structure theorem}, if $\beta$ is linearly independent from $\alpha$ there is a set $X '\subset G / \Gamma$ of full $\mu$-measure so that for every $x,x' \in X'$ with $x \in U _ {[\beta]}. x '$ we have that $\mu ^ {[\alpha]} _ x = \mu ^ {[\alpha]} _ {x '}$. It follows that there is a countably generated $\sigma$-algebra $\mathcal A '$ equivalent to $\mathcal{A}$ consisting of $U _ {[\beta]}$-invariant sets, and hence by Proposition~\ref{relative leafwise measures proposition}, for every $\beta$ linearly independent from $\alpha$, we have that
\begin{equation*}
\mu _ x ^ {[\beta]} = \mu _ x ^ {\mathcal{A}, [\beta]}\qquad\text{a.e.}
\end{equation*}

It follows from the definition of entropy contribution of coarse Lyapunov exponents \eqref{equation defining entropy contribution} and the definition of relative entropy contribution \eqref{equation defining relative entropy contribution} that for $\beta$ linearly independent from $\alpha$ and $\mathbf n \in \Z ^ r$,
\begin{equation*}
D _ \mu (\mathbf n, [\beta]) = D ^ {\mathcal{A}} _ \mu (\mathbf n, [\beta])
.\end{equation*}
Recall that
\[
 I_x^{[\alpha]}=\{u\in U_{[\alpha]}:\mu _ x ^ {[\alpha]}u = \mu _ x ^ {[\alpha]}\}.
\]

\begin{Lemma}\label{supported on invariance group lemma} With the notations above we have $\supp \mu _ x ^ {\mathcal{A}, [\alpha]}\subset I _ x ^ {[\alpha]}$.
\end{Lemma}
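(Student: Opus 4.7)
The plan is to compare two descriptions of the conditional measure $\mu _ x ^ {\mathcal{A} \vee \mathcal{C}}$ for an auxiliary $\sigma$-algebra $\mathcal{C}$ subordinate to $U _ {[\alpha]}$ on a set $Y$ of large $\mu$-measure. Fix $\mathbf n \in \Z ^ r$ with $\alpha (\mathbf n) > 0$ so that $a = a ^ {\mathbf n}$ expands $U _ {[\alpha]}$ under conjugation, and let $\mathcal{C}$ be an $a$-monotonic $\sigma$-algebra subordinate to $U _ {[\alpha]}$ on $Y$ as furnished by Proposition~\ref{monotone subordinate algebra proposition}.

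For $x \in Y$ write $[x] _ {\mathcal{C}} = V _ x . x$ with $V _ x$ a bounded open neighborhood of the identity in $U _ {[\alpha]}$. By the definition of $\mathcal{A}$ together with the equivariance $\mu _ {u . x} ^ {[\alpha]} = \mu _ x ^ {[\alpha]} u ^ {-1}$ from Proposition~\ref{proposition defining leafwise measures}(1), the point $u . x$ lies in $[x] _ {\mathcal{A}}$ if and only if $u \in I _ x ^ {[\alpha]}$ (for $u \in V _ x$ with $u . x$ in the appropriate co-null set), and hence
\begin{equation*}
[x] _ {\mathcal{A} \vee \mathcal{C}} \;=\; [x] _ \mathcal{A} \cap [x] _ \mathcal{C} \;=\; \bigl(V _ x \cap I _ x ^ {[\alpha]}\bigr) . x.
\end{equation*}
On the other hand, by transitivity of conditional measures $\mu _ x ^ {\mathcal{A} \vee \mathcal{C}} = \bigl(\mu _ x ^ \mathcal{A}\bigr) _ x ^ \mathcal{C}$, and applying Proposition~\ref{proposition defining leafwise measures}(3) to the probability measure $\mu _ x ^ \mathcal{A}$ with the subordinate $\sigma$-algebra $\mathcal{C}$ expresses this same measure, up to proportionality, as the pushforward of $\mu _ x ^ {\mathcal{A}, [\alpha]} |_{V _ x}$ under $u \mapsto u . x$. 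Since $u \mapsto u . x$ is injective on $V _ x$, comparing the two descriptions yields
\begin{equation*}
\supp \bigl(\mu _ x ^ {\mathcal{A}, [\alpha]} |_{V _ x}\bigr) \;\subset\; V _ x \cap I _ x ^ {[\alpha]}.
\end{equation*}

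The remaining point---and the main obstacle---is to upgrade this to $\supp \mu _ x ^ {\mathcal{A}, [\alpha]} \subset I _ x ^ {[\alpha]}$ by letting $V _ x$ exhaust $U _ {[\alpha]}$. Fix $\delta _ 0 > 0$ and a subset $Y _ 0 \subset Y$ of positive $\mu$-measure on which $V _ y \supset B ^ {U _ {[\alpha]}} _ {\delta _ 0}$. For $k \geq 0$ the $\sigma$-algebra $a ^ k \mathcal{C}$ is a coarsening of $\mathcal{C}$ by $a$-monotonicity, and its atom at $x \in a ^ k Y$ is $\bigl(a ^ k V _ {a ^ {-k} x} a ^ {-k}\bigr) . x$, which contains $\bigl(a ^ k B ^ {U _ {[\alpha]}} _ {\delta _ 0} a ^ {-k}\bigr) . x$ whenever $a ^ {-k} x \in Y _ 0$; since $a$ expands $U _ {[\alpha]}$ under conjugation, these plaques exhaust $U _ {[\alpha]} . x$ as $k \to \infty$. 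Poincar\'e recurrence (together with ergodicity of $\mu$ under $a ^ {\bullet}$) yields, for $\mu$-a.e.~$x$, a sequence $k _ j \to \infty$ with $a ^ {-k _ j} x \in Y _ 0$; running the preceding argument for each $a ^ {k _ j} \mathcal{C}$ and using closedness of $I _ x ^ {[\alpha]}$ completes the proof.
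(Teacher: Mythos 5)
Your argument is correct and arrives at the right conclusion, but it takes a genuinely heavier route than the paper's. The paper argues directly with the support, with no auxiliary subordinate $\sigma$-algebra and no Poincar\'e recurrence: after establishing that $\mu_x^{\mathcal{A},[\alpha]}\bigl(\{u \in U_{[\alpha]} : u.x \notin X' \cap [x]_{\mathcal{A}}\}\bigr)=0$ (equation~\eqref{equation on leaves}), one takes any $u \in \supp \mu_x^{\mathcal{A},[\alpha]}$, finds $u_i \to u$ with $u_i.x \in X' \cap [x]_{\mathcal{A}}$, deduces $u_i \in I_x^{[\alpha]}$ by combining $\mathcal{A}$-constancy of the leafwise measure with the equivariance $\mu_{u_i.x}^{[\alpha]} = \mu_x^{[\alpha]} u_i^{-1}$, and then invokes closedness of $I_x^{[\alpha]}$. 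Your approach---joining $\mathcal{A}$ with an $a$-monotone subordinate $\sigma$-algebra $\mathcal{C}$, comparing two descriptions of $\mu_x^{\mathcal{A}\vee\mathcal{C}}$, then exhausting $U_{[\alpha]}$ via the coarsenings $a^k\mathcal{C}$ and Poincar\'e recurrence---does work, but buys nothing here. One place you should be more careful: the set equality $[x]_{\mathcal{A}\vee\mathcal{C}} = (V_x \cap I_x^{[\alpha]}).x$ does not literally hold, because $[x]_{\mathcal{A}}$ may contain points $u.x$ with $u.x \notin X'$, for which the equivariance of Proposition~\ref{proposition defining leafwise measures}(1) is unavailable and one cannot conclude $u \in I_x^{[\alpha]}$. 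What saves the comparison is exactly what the paper isolates as \eqref{equation on leaves}: since $\mu(X')=1$, one has $\mu_\xi^{\mathcal{A}}(X')=1$ a.s.\ and hence $\mu_x^{\mathcal{A},[\alpha]}$ gives zero mass to $\{u : u.x \notin X'\}$, so the ``bad'' $u$ contribute nothing and the support inclusion $\supp\bigl(\mu_x^{\mathcal{A},[\alpha]}|_{V_x}\bigr)\subset V_x \cap I_x^{[\alpha]}$ does follow (using that $V_x \setminus I_x^{[\alpha]}$ is open). Your parenthetical ``for $u \in V_x$ with $u.x$ in the appropriate co-null set'' gestures at this, but it needs to be stated and justified, not left implicit, since it is the crux of the comparison step.
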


\begin{proof}

By Proposition~\ref{proposition defining leafwise measures}, there is a set $X ' \subset G /\Gamma$ of full $\mu$-measure, so that if $x, u . x \in X '$ for $u \in U _ {[\alpha]}$ then
$\mu _ x ^ {[\alpha]}= \mu _ {u. x} ^ {[\alpha]} u$. Since $\mathcal{A}$ is the $\sigma$-algebra generated by the map $x \mapsto \mu _ x ^ {[\alpha]}$, we may also assume that for every $\xi \in G / \Gamma$, the leafwise measure $\mu ^ {[\alpha]} _ x$ is constant on $[\xi] _ {\mathcal A} \cap X '$.
It follows from $\mu (X ') = 1$ that for $\mu$-a.e. $\xi$ we have that $\mu _ \xi ^ {\mathcal{A} }(X ') = 1$, and also that for $\mu _ \xi ^ {\mathcal{A}}$-a.e. $x$
\begin{equation}\label{equation on leaves}
\mu _ x ^ {\mathcal{A}, [\alpha]} \left ( \left\{ u \in U _ {[\alpha]}: u.x \not\in X ' \cap [x]_{\mathcal A} \right\}\right )=0;
\end{equation}
recall here that by definition
\begin{equation*}
\mu _ x ^ {\mathcal{A}, [\alpha]}  =(\mu _ \xi ^ {\mathcal{A}})^{[\alpha]} _ x \qquad \text{$\mu _ \xi ^ {\mathcal{A}}$-a.s.}
\end{equation*}
hence for $\mu$-a.e.~$\xi$, equation~\eqref{equation on leaves} follows for $\mu_\xi^{\mathcal A}$-a.e.~$x$
from the fact that $X'$ is a conull set with respect to $\mu _ \xi ^ {\mathcal{A}}$.
Fix $x \in X '$ for which \eqref{equation on leaves} holds, and suppose $u \in \supp \mu _ x ^ {\mathcal{A}, [\alpha]}$. Then there exist $u _ i \in U _ {[\alpha]}$ tending to $u$ so that $u _ i . x \in X '$.

By definition of $X '$ it follows that
$\mu _ {u _ i . x} ^ {\mathcal{A}}$ is simultaneously equal to
$\mu _ x ^ {\mathcal{A}}$ and to $\mu _ x ^ {\mathcal{A}}u_i^{-1}$,
hence $u _ i \in I _ x ^ {[\alpha]}$.
Since the latter is a closed subgroup of $U _ {[\alpha]}$, it follows that $u \in I _ x ^ {[\alpha]}$.
\end{proof}

\begin{Lemma}\label{lemma about conditional leafwise measures}
Let $\xi \in G / \Gamma$ and consider the system of leafwise measures along $I_\xi:= I ^ {[\alpha]} _ \xi$
for the probability measure $\mu ^ {\mathcal{A}} _ \xi$, i.e.~$\mu^{\mathcal{A}, I_\xi}_x:= \Bigl(\mu ^ {\mathcal{A}} _ \xi\Bigr)_x^{I_\xi}$.
Then they are a.e. equal to $\mu _ x ^ {\mathcal{A}, [\alpha]}$ in the sense that for every $V _ 1, V _ 2 \subset U _ {[\alpha]}$ and $\mu ^ {\mathcal{A}} _ \xi$-a.e. $x$
\begin{equation*}
\frac
{\mu _ x ^ {\mathcal{A}, [\alpha]} (V _ 1)
}{
\mu _ x ^ {\mathcal{A}, [\alpha]} (V _ 2)}
= \frac
{\mu ^ {\mathcal{A},I_\xi} _ x (V _ 1 \cap I _ \xi)
}{
\mu ^ {\mathcal{A},I_\xi} _ x (V _ 2 \cap I _ \xi)}
.\end{equation*}
\end{Lemma}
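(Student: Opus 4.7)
The plan is to show that the leafwise measure $\nu_x^{[\alpha]} := \mu_x^{\mathcal{A},[\alpha]}$ of $\nu := \mu_\xi^{\mathcal{A}}$ along $U_{[\alpha]}$---already known to concentrate on a canonical subgroup---coincides, up to proportionality, with the leafwise measure $\nu_x^{I_\xi} := \mu_x^{\mathcal{A}, I_\xi}$ of the same $\nu$ along the smaller subgroup $I_\xi$. The tool will be the characterization of leafwise measures by the conditional measures they induce on atoms of subordinate $\sigma$-algebras (Proposition~\ref{proposition defining leafwise measures}(3)), applied to one $\sigma$-algebra that serves both groups simultaneously.

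First, since $\mathcal{A}$ was defined as the preimage of the Borel $\sigma$-algebra under $x \mapsto \mu_x^{[\alpha]}$, the map $x \mapsto \mu_x^{[\alpha]}$ is constant on each atom $[\xi]_{\mathcal{A}}$. In particular $I_x^{[\alpha]} = I_\xi$ for $\nu$-a.e.\ $x$, and combining this with Lemma~\ref{supported on invariance group lemma} yields $\supp \nu_x^{[\alpha]} \subset I_\xi$ for $\nu$-a.e.\ $x$. Hence $\nu_x^{[\alpha]}(V) = \nu_x^{[\alpha]}(V \cap I_\xi)$ for every $V \subset U_{[\alpha]}$, so that the ratio in the statement only probes $\nu_x^{[\alpha]}$ through its restriction to $I_\xi$.

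Next, using \cite[Cor.~6.15]{Einsiedler-Lindenstrauss-Clay}, choose a countably generated $\sigma$-algebra $\mathcal{C}$ of subsets of $[\xi]_{\mathcal{A}}$ that is subordinate to $I_\xi$ on a set $Y$ with $\nu(Y) > 1 - \epsilon$; write $[x]_{\mathcal{C}} = V_x.x$ with $V_x \subset I_\xi$ a bounded open neighborhood of $1$. Because $I_\xi \hookrightarrow U_{[\alpha]}$ is a topological embedding, after a mild refinement ensuring $V_x$ lies within the local injectivity radius of the $U_{[\alpha]}$-action, the same $\mathcal{C}$ is weakly subordinate to $U_{[\alpha]}$ on $Y$ in the sense preceding Lemma~\ref{slightly more general sigma-algebra lemma}, with the same $V_x$. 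Applying Proposition~\ref{proposition defining leafwise measures}(3) to $(\nu, I_\xi)$ and Lemma~\ref{slightly more general sigma-algebra lemma} to $(\nu, U_{[\alpha]})$ gives, for $\nu$-a.e.\ $x \in Y$,
\begin{equation*}
\nu_x^{\mathcal{C}} \;\propto\; \bigl(\nu_x^{I_\xi}|_{V_x}\bigr).x \qquad\text{and}\qquad \nu_x^{\mathcal{C}} \;\propto\; \bigl(\nu_x^{[\alpha]}|_{V_x}\bigr).x,
\end{equation*}
where in the second identification I used $\supp \nu_x^{[\alpha]} \subset I_\xi$ to replace the a priori $U_{[\alpha]}$-plaque by $V_x$. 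Consequently $\nu_x^{[\alpha]}|_{V_x}$ and $\nu_x^{I_\xi}|_{V_x}$ are proportional on $V_x$.

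Finally, I need to upgrade this \emph{local} proportionality on one plaque to a \emph{global} proportionality of $\nu_x^{[\alpha]}|_{I_\xi}$ and $\nu_x^{I_\xi}$, which is enough since the ratios in the statement are insensitive to an overall constant. This is done by letting $\epsilon \to 0$ and, for a given $x$, transporting the identity to arbitrarily large plaques through $a^{j\mathbf n}$ with $\alpha(\mathbf n) > 0$ and Poincar\'e recurrence to $Y$, invoking the transformation rule of Proposition~\ref{transformation rule for leafwise measures}---exactly the scheme used in the proof of Proposition~\ref{relative leafwise measures proposition}. I expect the only slightly delicate step to be the bookkeeping that the proportionality constants on overlapping plaques are consistent; this is handled routinely by replacing $\mathcal{C}$ with an $a$-monotone subordinate $\sigma$-algebra as in Proposition~\ref{monotone subordinate algebra proposition}, so that the constant is transported along the $a$-orbit without ambiguity. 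Once global proportionality is established, the ratio identity of the lemma is immediate.
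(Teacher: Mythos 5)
Your overall plan is sound and shares the paper's key ideas—in particular, using $\supp\mu_x^{\mathcal A,[\alpha]}\subset I_\xi$ and finding a single $\sigma$-algebra that is simultaneously subordinate to $I_\xi$ and weakly subordinate to $U_{[\alpha]}$, then invoking Proposition~\ref{proposition defining leafwise measures}(3) and Lemma~\ref{slightly more general sigma-algebra lemma} to match the conditional measures on the common atoms. The local proportionality step is fine.

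There is, however, a genuine gap in the \emph{order of construction} at the final globalization step. You build $\mathcal C$ directly from $\nu=\mu_\xi^{\mathcal A}$ via Corollary~6.15 of the Clay notes, and then propose to make it $a$-monotone ``as in Proposition~\ref{monotone subordinate algebra proposition}'' and to transport the identity with Proposition~\ref{transformation rule for leafwise measures}. Both of those tools require an $a$-\emph{invariant} measure, and $\nu=\mu_\xi^{\mathcal A}$ is not $a^{\mathbf n}$-invariant: since $\mathcal A$ is $a^\bullet$-invariant, one only has $a^{\mathbf n}_*\mu_\xi^{\mathcal A}=\mu_{a^{\mathbf n}\xi}^{\mathcal A}$, which is a different conditional measure supported on a different atom. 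So as stated, neither Proposition~\ref{monotone subordinate algebra proposition} nor Proposition~\ref{transformation rule for leafwise measures} applies to $\nu$, and the ``routine'' transport of the proportionality constant along the $a$-orbit needs an extra argument (essentially a measurable-family version of the transformation rule for $\xi\mapsto\mu_\xi^{\mathcal A}$). The paper sidesteps this entirely by running the construction in the opposite order: it applies Proposition~\ref{monotone subordinate algebra proposition} to the $a$-invariant $\mu$ and the group $U_{[\alpha]}$ to obtain an $a_1$-monotone $\mathcal C$ subordinate on an $a_1$-invariant set $Y$, and then passes to $\tilde{\mathcal C}=\mathcal C\vee\mathcal A\vee\{X',X\setminus X'\}$, whose atoms are automatically bounded pieces of $I_\xi$-orbits. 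Because $\mathcal C$ is built from $\mu$, replacing $\mathcal C$ by $a_1^k\mathcal C$ makes perfectly good sense and grows the plaques, with no appeal to any transformation rule for $\nu$. If you reroute your argument so that the subordinate $\sigma$-algebra comes from $\mu$ and $U_{[\alpha]}$ (and then intersect with $\mathcal A$ to shrink the atoms into $I_\xi$-orbits) rather than from $\nu$ and $I_\xi$, the rest of your proof goes through and becomes essentially the paper's.
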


\begin{proof}
Let $a_1 = a ^ \mathbf n$ be such that conjugation by $a _ 1$ expands $U _ {[\alpha]}$, i.e.~$\alpha (\mathbf n) > 0$. Let $\epsilon>0$. By Proposition~\ref{monotone subordinate algebra proposition} there is an $a_1$-invariant $Y \subset X$ with $\mu (Y) > 1 - \epsilon$ and an $a_1$-monotone countably generated $\sigma$-algebra $\mathcal{C}$ subordinate to $U _ {[\alpha]}$ on $Y$.

Consider the $\sigma$-algebra $\tilde {\mathcal C}$
\begin{equation*}
\tilde {\mathcal C} = \mathcal C \vee \mathcal A \vee \left\{ X ', X \setminus X ' \right\}
,\end{equation*}
with $X'$ as in the proof of Lemma~\ref{supported on invariance group lemma} and fix $\xi \in Y$ for which $\mu _ \xi ^ {\mathcal{A}} (X ') = 1$.
We claim that $\tilde {\mathcal{C}}$ is weakly subordinate to~$I _ \xi$ on~$Y$ relative to $\mu _ \xi ^ {\mathcal{A}}$. Note that $\mu _ \xi ^ {\mathcal{A}} (X ') = 1$ implies that for $\mu _ \xi ^ {\mathcal{A}}$-a.e. $x$,
\begin{equation}\label{null measure for xi equation}
\mu^{\mathcal{A}, I_\xi}_x \{u \in I_\xi : u.x \in X'\} = 0.
\end{equation}

We first show that for such~$\xi$, for $\mu _ \xi ^ {\mathcal{A}}$-a.e.~$x \in Y$,
\begin{equation}\label{tilde atoms equation}
[x]_{\tilde {\mathcal C}} = [x] _ {\mathcal C} \cap I _ \xi.x \cap X'
.\end{equation}
Indeed, for $\mu _ \xi ^ {\mathcal{A}}$-a.e.~$x$ we have that $x \in X '$ and hence $[x] _ {\tilde {\mathcal C}} \subset X '$.
Moreover, for $x\in Y$ we have that $[x]_{\cC}$ is a $U$-plaque, hence every $z \in [x] _ {\tilde{\mathcal{C}}}$ is of the form $u.x$, and since such $z$ are in particular in $[x]_{\mathcal{A}}$, we know that $\mu _ z ^ {[\alpha]} = \mu _ x ^ {[\alpha]}$.
As in the proof of Lemma~\ref{supported on invariance group lemma}, this implies that $u \in I_x^{[\alpha]}= I_\xi$.
Thus
$[x] _ {\tilde {\mathcal{C}}} \subset [x] _ { {\mathcal{C}}} \cap I _ \xi.x$.
On the other hand, $I _ \xi.x \cap X' \subset [x]_{\mathcal{A}}$, so $I _ \xi.x \cap X' \cap [x]_{\cC} \subset [x]_{\tilde{\cC}}$. This implies~\eqref{tilde atoms equation}.

Let $V_x= \left\{ u \in U: u.x \in [x]_\mathcal{C} \right\}$, 
$\tilde V_x= \left\{ u \in U: u.x \in [x]_{\tilde{\mathcal{C}}} \right\}$. 
If $x \in Y$,
then $V_x$ contains an open neighbourhood $B$ of $1$ in $U_{[\alpha]}$, and by (2) of Proposition~\ref{proposition defining leafwise measures} a.s.~$\mu_x^{I_\xi} (B \cap I_\xi)>0$. Assuming $x$ also satisfies~\eqref{null measure for xi equation} (which again happens a.s.), we have that 
\[
\mu_x^{I_\xi} ((B \cap I_\xi)\setminus \tilde V_x) = 0
\]
hence $\mu_x^{I_\xi}(\tilde V_x)>0$ so $\tilde{\cC}$ is indeed weakly subordinate to $I_\xi$ relative to $\mu _ \xi ^ {\mathcal{A}}$ on a subset of full $\mu_x^{\mathcal{A}}$ of $Y$.
Thus by Lemma~\ref{slightly more general sigma-algebra lemma}, for any bounded $V_1,V_2 \subset U_{[\alpha]}$
\begin{equation*}
\frac
{\Bigl(\mu ^ {\mathcal{A}} _ \xi\Bigr)_x^{I_\xi} (V _ 1 \cap V_x \cap I _ \xi)
}{
\Bigl(\mu ^ {\mathcal{A}} _ \xi\Bigr)_x^{I_\xi} (V _ 2 \cap V_x \cap I _ \xi)}
=
\frac
{\Bigl(\mu ^ {\mathcal{A}} _ \xi\Bigr)_x^ {\tilde {\mathcal{C}}} (V_1.x)}
{\Bigl(\mu ^ {\mathcal{A}} _ \xi\Bigr)_x^ {\tilde {\mathcal{C}}} (V_2.x)}
.\end{equation*}
However, since $\{ X', X \setminus X ' \}$ is a trivial $\sigma$-algebra $\tilde {\mathcal{C}}$ is equivalent to the $\sigma$-algebra $\mathcal{C} \vee \mathcal{A}$ hence
\[
\Bigl(\mu ^ {\mathcal{A}} _ \xi\Bigr)_x^ {\tilde {\mathcal{C}}}=\Bigl(\mu ^ {\mathcal{A}} _ \xi\Bigr)_x^ { {\mathcal{C}}} \qquad\text{a.e.}
\]
By Proposition~\ref{proposition defining leafwise measures} it follows that
\begin{equation*}
\frac
{\Bigl(\mu ^ {\mathcal{A}} _ \xi\Bigr)_x^{[\alpha]} (V _ 1 \cap V_x )
}{
\Bigl(\mu ^ {\mathcal{A}} _ \xi\Bigr)_x^{[\alpha]} (V _ 2 \cap V_x )}
=
\frac
{\Bigl(\mu ^ {\mathcal{A}} _ \xi\Bigr)_x^ { {\mathcal{C}}} (V_1.x)}
{\Bigl(\mu ^ {\mathcal{A}} _ \xi\Bigr)_x^ { {\mathcal{C}}} (V_2.x)}
\end{equation*}
hence
\[
\frac
{\Bigl(\mu ^ {\mathcal{A}} _ \xi\Bigr)_x^{[\alpha]} (V _ 1 \cap V_x )
}{
\Bigl(\mu ^ {\mathcal{A}} _ \xi\Bigr)_x^{[\alpha]} (V _ 2 \cap V_x )}
=
\frac
{\Bigl(\mu ^ {\mathcal{A}} _ \xi\Bigr)_x^{I_\xi} (V _ 1 \cap V_x \cap I _ \xi)
}{
\Bigl(\mu ^ {\mathcal{A}} _ \xi\Bigr)_x^{I_\xi} (V _ 2 \cap V_x \cap I _ \xi)}.
\]
Replacing $\cC$ by $a_1^k \cC$ the sets $V_x$ will contain (for any fixed $x \in Y$) an arbitrary large ball around $1 \in U_{[\alpha]}$, and the lemma follows.
\end{proof}

\begin{Lemma}\label{lemma used to show invariance}
Let $\psi$ be a Borel measurable map from the space of closed subgroups of $U _ {[\alpha]}$ to a symmetric compact subset $\Omega \subset U _ {[\alpha]}$ so that $\psi (I) \in I$ for every closed $I \leq U _ {[\alpha]}$.  Then
\begin{enumerate}
\item for every $f \in L ^\infty (\mu)$ we have that
\begin{equation*}
\int f (x) \,d \mu = \int f (\psi (I _ x ^ {[\alpha]}).x) \,d \mu
.\end{equation*}
\item for $\mu$-a.e.~$x$ it holds that $\psi (I _ x ^ {[\alpha]}).x \in [x] _ {\mathcal{A}}$.
\item for $\mu$-a.e.~$x$ the measure $\mu _ x ^ {\mathcal{A}}$ is $\psi (I _ x ^ {[\alpha]})$-invariant.
\end{enumerate}
\end{Lemma}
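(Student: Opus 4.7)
The strategy is to establish item~(3) first, from which item~(1) will follow by disintegrating $\mu$ along $\mathcal{A}$, and item~(2) will follow from~(1) together with Proposition~\ref{proposition defining leafwise measures}(1). Since $\mathcal{A}$ is generated by the map $x\mapsto\mu_x^{[\alpha]}$ and $I_x^{[\alpha]}$ is determined by $\mu_x^{[\alpha]}$, the invariance group $I_x^{[\alpha]}$ is $\mathcal{A}$-measurable; on each atom $[\xi]_{\mathcal{A}}$ its common value will be denoted $I_\xi:=I_\xi^{[\alpha]}$, and $\psi(I_x^{[\alpha]})$ is itself $\mathcal{A}$-measurable. Thus~(3) is equivalent to the assertion that $\mu_\xi^{\mathcal{A}}$ is $I_\xi$-invariant for $\mu$-a.e.\ $\xi$ (as $\psi(I_\xi)\in I_\xi$).

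I would prove this invariance by showing that the leafwise measure $(\mu_\xi^{\mathcal{A}})_x^{I_\xi}$ is a scalar multiple of Haar measure on $I_\xi$ for $\mu_\xi^{\mathcal{A}}$-a.e.\ $x$; the invariance then follows by the standard argument using Proposition~\ref{proposition defining leafwise measures}(3) with a $\sigma$-algebra subordinate to $I_\xi$. By Lemmas~\ref{supported on invariance group lemma} and~\ref{lemma about conditional leafwise measures} this leafwise measure is proportional to $\mu_x^{\mathcal{A},[\alpha]}$ (which is itself supported on $I_\xi$), so the problem reduces to showing $\mu_x^{\mathcal{A},[\alpha]}$ is a scalar multiple of Haar measure on $I_\xi$. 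To establish this, I would adapt the refinement argument of the proof of Lemma~\ref{lemma about conditional leafwise measures}. Pick $a=a^{\mathbf{n}}$ with $\alpha(\mathbf{n})>0$, let $\mathcal{C}$ be an $a$-monotone countably generated $\sigma$-algebra subordinate to $U_{[\alpha]}$ on an $a$-invariant set $Y$ of large measure (Proposition~\ref{monotone subordinate algebra proposition}), and set $\tilde{\mathcal{C}}=\mathcal{C}\vee\mathcal{A}$. As in the derivation of~\eqref{tilde atoms equation}, for $\mu_\xi^{\mathcal{A}}$-a.e.\ $x\in Y$ the atom $[x]_{\tilde{\mathcal{C}}}$ corresponds under $u\mapsto u.x$ to $I_\xi\cap V_x^{\mathcal{C}}$. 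Applying Lemma~\ref{slightly more general sigma-algebra lemma} to $(\mu,\tilde{\mathcal{C}})$ (which is only weakly subordinate to $U_{[\alpha]}$) gives $\mu_x^{\tilde{\mathcal{C}}}\propto\mu_x^{[\alpha]}|_{I_\xi\cap V_x^{\mathcal{C}}}.x$, while applying Proposition~\ref{proposition defining leafwise measures}(3) to $(\mu_\xi^{\mathcal{A}},\mathcal{C})$ gives $(\mu_\xi^{\mathcal{A}})_x^{\mathcal{C}}\propto\mu_x^{\mathcal{A},[\alpha]}|_{V_x^{\mathcal{C}}}.x$.

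Since $\mu_x^{\tilde{\mathcal{C}}}=(\mu_\xi^{\mathcal{A}})_x^{\mathcal{C}}$ a.s.\ by the chain rule, and since $\mu_x^{\mathcal{A},[\alpha]}$ is supported on $I_\xi$ by Lemma~\ref{supported on invariance group lemma}, comparing the two expressions yields
\[
\mu_x^{\mathcal{A},[\alpha]}\big|_{I_\xi\cap V_x^{\mathcal{C}}} \propto \mu_x^{[\alpha]}\big|_{I_\xi\cap V_x^{\mathcal{C}}},
\]
and the right-hand side is a multiple of Haar on $I_\xi\cap V_x^{\mathcal{C}}$ by the very definition of $I_\xi$ as the right-invariance group of $\mu_x^{[\alpha]}$. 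Replacing $\mathcal{C}$ by $a^{k}\mathcal{C}$ and invoking Poincar\'e recurrence allows $V_x^{\mathcal{C}}$ to be taken arbitrarily large (exactly as in the final step of the proof of Lemma~\ref{lemma about conditional leafwise measures}), completing the proof of the Haar property.

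For~(1), decomposing $\mu=\int\mu_\xi^{\mathcal{A}}\,d\mu(\xi)$ and using the $\mathcal{A}$-measurability of $\psi(I_{\cdot}^{[\alpha]})$ together with the $\psi(I_\xi)$-invariance of $\mu_\xi^{\mathcal{A}}$ from~(3), the inner integral $\int f(\psi(I_x^{[\alpha]}).x)\,d\mu_\xi^{\mathcal{A}}(x)$ equals $\int f\,d\mu_\xi^{\mathcal{A}}$, and integrating in $\xi$ yields~(1). For~(2), let $X'$ be the $\mu$-conull set from Proposition~\ref{proposition defining leafwise measures}(1) on which $\mu_y^{[\alpha]}=\mu_{u.y}^{[\alpha]}\,u$ whenever $y,u.y\in X'$; applying~(1) to $f=1_{X\setminus X'}$ gives $\psi(I_x^{[\alpha]}).x\in X'$ a.s. For such $x$ (also in $X'$), we then have $\mu_{\psi(I_x^{[\alpha]}).x}^{[\alpha]}=\mu_x^{[\alpha]}\,\psi(I_x^{[\alpha]})^{-1}=\mu_x^{[\alpha]}$ using $\psi(I_x^{[\alpha]})\in I_x^{[\alpha]}$, which is precisely $\psi(I_x^{[\alpha]}).x\in[x]_{\mathcal{A}}$. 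The main obstacle in this plan is the refinement argument establishing the Haar property of $\mu_x^{\mathcal{A},[\alpha]}$; the remaining steps are routine manipulations of the definitions.
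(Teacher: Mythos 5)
Your proposed logical order (prove~(3) first, then (1), then~(2)) differs from the paper's, which proves (1) by a dynamical averaging argument, then deduces~(2) from~(1), and finally~(3) from~(1) and~(2) via uniqueness of conditional measures. Reversing the order is not in itself a problem, since each of your implications is sound, and your derivations of~(1) from~(3) and of~(2) from~(1) are both correct. The gap is in your proof of~(3).

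The step that fails is the application of Lemma~\ref{slightly more general sigma-algebra lemma} to the pair $(\mu,\tilde{\mathcal{C}})$ with $\tilde{\mathcal{C}}=\mathcal{C}\vee\mathcal{A}$. For $\tilde{\mathcal{C}}$ to be weakly subordinate to $U_{[\alpha]}$ relative to $\mu$ you need, for $\mu$-a.e.~$x$, that
$\mu_x^{[\alpha]}\bigl(\tilde V_x\bigr)>0$ where $\tilde V_x=I_\xi\cap V_x^{\mathcal{C}}$ (with $\xi=x$). But although $\mu_x^{[\alpha]}$ is right-$I_\xi$-invariant, there is no reason for $I_\xi$ itself to have positive $\mu_x^{[\alpha]}$-measure; typically $I_\xi$ is a proper closed subgroup and $\mu_x^{[\alpha]}(I_\xi)=0$ (think of $\mu_x^{[\alpha]}$ being a product of Haar measure on $I_\xi$ with a continuous transversal measure). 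In that case $\mu_x^{\tilde{\mathcal{C}}}$ is \emph{not} proportional to $\mu_x^{[\alpha]}|_{I_\xi\cap V_x^{\mathcal{C}}}.x$ (the latter is the zero measure), and the comparison that is meant to yield the Haar property of $\mu_x^{\mathcal{A},[\alpha]}$ collapses: right-invariance only tells you that $\mu_x^{[\alpha]}|_{I_\xi}$ is a \emph{possibly zero} multiple of Haar, which gives no information. This is precisely the phenomenon that the paper's Lemma~\ref{lemma about conditional leafwise measures} is careful to avoid: there weak subordination is established for $I_\xi$ (not $U_{[\alpha]}$) relative to the \emph{conditional} measure $\mu_\xi^{\mathcal{A}}$, using Lemma~\ref{supported on invariance group lemma} to guarantee that the relevant leafwise measure charges $I_\xi$.

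In fact, the statement you are trying to establish en route --- that $\mu_x^{\mathcal{A},[\alpha]}$ is Haar on $I_\xi$ --- is exactly Corollary~\ref{invariance corollary}, and in the paper that corollary is deduced \emph{from} Lemma~\ref{lemma used to show invariance}, via part~(3), combined with Lemma~\ref{lemma about conditional leafwise measures}. The paper's actual route around the above difficulty is to prove~(1) by a genuinely dynamical argument: it integrates $f(x)-f(\psi(I_x^{[\alpha]}).x)$ conditionally on $a_1^k\mathcal{C}$, uses Proposition~\ref{proposition defining leafwise measures}(3) to rewrite each conditional expectation as an integral against $\mu_\xi^{[\alpha]}$ over a plaque $V(k,\xi)$, and then exploits the right-$I_\xi^{[\alpha]}$-invariance of $\mu_\xi^{[\alpha]}$ \emph{as a measure on all of $U_{[\alpha]}$} (not restricted to $I_\xi$) to replace the integration domain $V(k,\xi)$ by $V'(k,\xi)=\{u\psi'(\xi,u):u\in V(k,\xi)\}$; the discrepancy lives in a boundary set $\partial(\Omega,k)$ whose measure tends to $0$ as $k\to\infty$. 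That use of the invariance is global and never requires $I_\xi$ to carry positive $\mu_\xi^{[\alpha]}$-mass, which is what makes it work where your local restriction argument does not.
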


\begin{proof}
We start by showing (1). Let $a_1 = a ^ \mathbf n$ be such that conjugation by $a _ 1$ expands $U _ {[\alpha]}$, i.e.~$\alpha (\mathbf n) > 0$, \ $\epsilon > 0$ arbitrary and apply Proposition~\ref{monotone subordinate algebra proposition} to get an $a_1$-invariant $Y \subset X$ with $\mu (Y) > 1 - \epsilon$ and an $a_1$-monotone countably generated $\sigma$-algebra $\mathcal{C}$ subordinate to $U _ {[\alpha]}$ on $Y$.

For any $k$,
\begin{equation} \label{invariance error estimate 1}
\int_ X f (x) \,d \mu - \int_ X f (\psi (I _ x ^ {[\alpha]}) . x) \,d \mu = \int_ X \E_x \left (f(x)- f (\psi (I _ x ^ {[\alpha]}) . x) \middle| a _ 1 ^ k \mathcal C\right ) \!(\xi)\,d \mu(\xi)
.\end{equation}
For $\xi \in Y$ we have that $[\xi] _ {a _ 1 ^ k \mathcal{C}}$ has the form $V(k,\xi) . \xi$ for some $V(k,\xi) \subset U _ {[\alpha]}$, and moreover by (3) of Proposition~\ref{proposition defining leafwise measures}
\begin{equation}\label{invariance error estimate 2}
\begin{aligned}
\E_x \left (f(x)- f (\psi (I _ x ^ {[\alpha]}) . x) \middle| a _ 1 ^ k \mathcal C\right ) \!(\xi)& =
\frac{\displaystyle \int_ {V(k,\xi)} \left (f (u.\xi) - f (\psi (I _ {u.\xi} ^ {[\alpha]}) u. \xi)\right ) \,d \mu _ \xi ^ {[\alpha]} (u)}{\mu _ \xi ^ {[\alpha]}(V(k,\xi))}\\
	&=\frac{\displaystyle \int_ {V(k,\xi)} \left (f (u.\xi) - f (\psi (u I _ {\xi} ^ {[\alpha]} u^{-1}) u. \xi)\right ) \,d \mu _ \xi ^ {[\alpha]} (u)} {\mu _ \xi ^ {[\alpha]}(V(k,\xi))}\\
	&=\frac{\displaystyle \int_ {V(k,\xi)} \left (f (u.\xi) - f (u \psi' (\xi, u). \xi)\right ) \,d \mu _ \xi ^ {[\alpha]} (u)}{\mu _ \xi ^ {[\alpha]}(V(k,\xi))}	
,\end{aligned}
\end{equation}
with $\psi ' (\xi,\bullet )$ some measurable, right $I _ \xi ^ {[\alpha]}$-invariant function $V(k,\xi) \to I _ \xi ^ {[\alpha]} \cap \Omega$.

Define
\begin{equation*}
V ' (k, \xi) = \left\{ u \psi ' (\xi, u): u \in V (k, \xi) \right\}
.\end{equation*}
Then since $I _ \xi ^ {[\alpha]}$ fixes $\mu _ \xi ^ {[\alpha]}$
\begin{equation}\label{using boundary of Greek Omega}
\int_ {V(k,\xi)} f (u \psi' (\xi, u). \xi) \,d \mu _ \xi ^ {[\alpha]} (u) = \int_ {V'(k,\xi)} f (u . \xi) \,d \mu _ \xi ^ {[\alpha]} (u)
.\end{equation}
Set
\begin{equation*}
\partial (\Omega, k) = \left\{ x \in Y: \Omega . x \not\subset [x] _ {a _ 1 ^ k \mathcal{C}} \right\};
\end{equation*}
as $a _ 1$ expands $U _ {[\alpha]}$ and since $\mathcal C$ is subordinate to $U _ {[\alpha]}$ on $Y$, it follows that $\mu (\partial (\Omega, k)) \to 0$ as $k \to \infty$.
On the other hand, by \eqref{using boundary of Greek Omega}, \eqref{invariance error estimate 1} and \eqref{invariance error estimate 2} we obtain
\begin{equation*}
\absolute {\int_ X f (x) \,d \mu - \int_ X f (\psi (I _ x ^ {[\alpha]}) . x) \,d \mu } \leq \norm {f} _ \infty (\mu  (X \setminus Y) + 2 \mu (\partial (\Omega, k)) \leq 2 \epsilon \norm {f} _ \infty
\end{equation*}
for $k$ large enough. Since $\epsilon$ is arbitrary this proves (1).

Let $X '$ be as in (1) of Proposition~\ref{proposition defining leafwise measures} applied to $U _ {[\alpha]}$. Then by (1) we have that
\begin{equation*}
X'' = \left\{ x \in X ': \psi (I _ x ^ {[\alpha]}) . x \in X '\right \}
\end{equation*}
also has full measure. For $x \in X ''$, we have that \begin{equation*}
\mu _ x ^ {[\alpha]} \psi (I _ x ^ {[\alpha]}) ^{-1} = \mu ^ {[\alpha]} _ {\psi (I _ x ^ {[\alpha]}).x}
.\end{equation*}
But $\psi (I _ x ^ {[\alpha]}) \in I _ x ^ {[\alpha]}$, the groups stabilizing from the right $\mu _ x ^ {[\alpha]}$. Hence $\mu _ x ^ {[\alpha]} = \mu ^ {[\alpha]}_ {\psi (I _ x ^ {[\alpha]}).x}$, or equivalently
\[
\psi (I _ x ^ {[\alpha]}).x \in [x] _ {\mathcal{A}}.
\]
This proves (2).

By (1) we have that
\begin{equation} \label{first item equation in long form}
\int f (x) \,d \mu_\xi ^ {[\alpha]} (x) \,d \mu (\xi) = \int f (\psi (I _ \xi ^ {[\alpha]}).x) \, d \mu_\xi ^ {[\alpha]} (x) \,d \mu (\xi)
.\end{equation}
By (2) we see that for a.e.~$\xi$, the measure $\psi (I _ \xi ^ {[\alpha]}).\mu_\xi ^ {[\alpha]}$ is a probability measure on $X$ giving full measure to $[x]_{\mathcal{A}}$. But then \eqref{first item equation in long form} shows that the system of measures $\psi (I _ \xi ^ {[\alpha]}).\mu_\xi ^ {[\alpha]}$ satisfies the defining properties of the conditional measures $\mu_\xi ^ {[\alpha]}$. By uniqueness of conditional measures, it follows that a.s.
\begin{equation*}
\psi (I _ \xi ^ {[\alpha]}).\mu_\xi ^ {[\alpha]} = \mu_\xi ^ {[\alpha]}
,\end{equation*}
establishing (3).
\end{proof}

\begin{Corollary} \label{invariance corollary} The leafwise conditional measures $\mu _ x ^ {\mathcal{A}, [\alpha]}$ are a.s. the proportionality class of the Haar measure on $I ^ {[\alpha]} _ x$.
\end{Corollary}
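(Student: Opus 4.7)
The plan is to combine the three preceding lemmas as follows. By Lemma~\ref{supported on invariance group lemma} the leafwise measure $\mu_x^{\mathcal A,[\alpha]}$ is supported on the closed subgroup $I_x^{[\alpha]}\leq U_{[\alpha]}$, and by Lemma~\ref{lemma about conditional leafwise measures} its restriction to this subgroup is proportional, for $\mu$-a.e.~$\xi$ and $\mu_\xi^{\mathcal A}$-a.e.~$x$, to the leafwise measure $\bigl(\mu_\xi^{\mathcal A}\bigr)_x^{I_\xi}$ for the action of $I_\xi=I_x^{[\alpha]}$ on the atom $[\xi]_{\mathcal A}$. Consequently it suffices to show that the conditional measure $\mu_\xi^{\mathcal A}$ is itself $I_\xi$-invariant; once this is known, the standard fact that the leafwise measures of an $H$-invariant measure along a locally free $H$-action are proportional to Haar on $H$ (an immediate consequence of Proposition~\ref{proposition defining leafwise measures}(1), which forces right $H$-invariance of the leafwise measure) finishes the proof.

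To establish $I_\xi$-invariance of $\mu_\xi^{\mathcal A}$, I apply Lemma~\ref{lemma used to show invariance}(3) to a countable family of Borel selectors whose values run over a dense subset of each possible invariance group. Concretely, fix an exhaustion $\Omega_1\subset\Omega_2\subset\cdots$ of $U_{[\alpha]}$ by symmetric compact neighbourhoods of the identity, and a countable base $\{W_{k,m}\}_m$ for the topology of each $\Omega_k$. Endowing the space of closed subgroups of $U_{[\alpha]}$ with the Effros--Chabauty Borel structure, the Kuratowski--Ryll-Nardzewski selection theorem produces Borel maps $\psi_{k,m}$ into $\Omega_k$ with $\psi_{k,m}(I)\in I\cap\overline{W_{k,m}}$ whenever this intersection is nonempty, and $\psi_{k,m}(I)=1$ otherwise. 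By construction $\{\psi_{k,m}(I)\}_{k,m}$ is dense in $I$ for every closed subgroup $I\leq U_{[\alpha]}$. Lemma~\ref{lemma used to show invariance}(3) yields that $\mu_\xi^{\mathcal A}$ is $\psi_{k,m}(I_\xi^{[\alpha]})$-invariant almost surely; intersecting the countably many full-measure sets and using that the stabilizer of $\mu_\xi^{\mathcal A}$ inside $U_{[\alpha]}$ is a closed subgroup, we conclude that $\mu_\xi^{\mathcal A}$ is fully $I_\xi^{[\alpha]}$-invariant for $\mu$-a.e.~$\xi$.

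Putting everything together, for a.e.~$\xi$ the measure $\bigl(\mu_\xi^{\mathcal A}\bigr)_x^{I_\xi}$ is proportional to Haar measure on $I_\xi$, and via Lemmas~\ref{supported on invariance group lemma} and~\ref{lemma about conditional leafwise measures} the same holds for $\mu_x^{\mathcal A,[\alpha]}$, viewed as a measure on $U_{[\alpha]}$ supported on $I_x^{[\alpha]}$. The only step in the argument that goes beyond direct quotation of earlier results is the measurable-selection construction of the family $\{\psi_{k,m}\}$; this is the expected main obstacle, but it is a soft standard application of descriptive set theory and presents no genuine difficulty.
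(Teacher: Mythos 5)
Your argument is correct and follows essentially the same route as the paper: both proofs produce a countable family of Borel selectors into the closed subgroups of $U_{[\alpha]}$ (via the Borel selection theorem), feed each selector into part~(3) of Lemma~\ref{lemma used to show invariance} to obtain a full-measure set on which $\mu_\xi^{\mathcal A}$ is invariant under the chosen element of $I_\xi^{[\alpha]}$, and then intersect countably many such sets and use that the stabilizer of $\mu_\xi^{\mathcal A}$ is a closed subgroup to promote density to full $I_\xi^{[\alpha]}$-invariance, after which Lemma~\ref{lemma about conditional leafwise measures} identifies $\mu_x^{\mathcal A,[\alpha]}$ with Haar measure on $I_x^{[\alpha]}$. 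One small caveat: the parenthetical claim that Proposition~\ref{proposition defining leafwise measures}(1) \emph{alone} forces the leafwise measures of an $I_\xi$-invariant measure along $I_\xi$ to be right-Haar is a little hasty --- that item only gives the cocycle relation $\mu_x^{I_\xi}=\mu_{u.x}^{I_\xi}u$, and one still needs an argument (as in the paper's reference to \cite[Prob.~6.27]{Einsiedler-Lindenstrauss-Clay}) that invariance of the measure $\mu_\xi^{\mathcal A}$ itself makes the leafwise measure constant along the orbit; this is a standard fact but not quite a one-line consequence of~(1).
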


\begin{proof}
For simplicity, we again denote $I _ x = I _ x ^ {[\alpha]}$. On each item of $\mathcal{A}$ the leafwise measure $\mu _ x ^ {[\alpha]}$ is fixed, hence also $I _ x$. We claim that for $\mu$-almost every~$x$, the conditional measures $\mu _ x ^ {\mathcal{A}}$ is invariant under $I _ x $. Indeed, let $B$ be an arbitrary open subset of $U _ {[\alpha]}$ with compact closure.

By the Borel selector theorem (e.g. \cite [Thm~12.16]{Kechris-book}) there is a Borel 
measurable map $\psi _ B$ from the space of closed subgroups of $U _ {[\alpha]}$ to 
$U _ {[\alpha]}$ so that for subgroups $I$ which are disjoint from $B$ we have that 
$\psi _ B (I) = 1$ whereas for subgroups $I$ intersecting $B$ we have that $\psi _ B (I) \in B$. 
It follows from (3) of Lemma~\ref{lemma used to show invariance} that for $\mu$-almost every 
$x$ for which $B\cap I_x \neq \emptyset$, left multiplication by $\psi _ B (I ^ {[\alpha]} _ {x})$
 preserves $\mu _ x ^ {\cA}$. Thus, by letting $B$ vary under a countable base of the topology 
 of $U_{[\alpha]}$, we get that for a.e.~$x$, the set of $g \in I_x$ preserving 
 $\mu _ x ^ {\cA}$ is dense in $I_x$. Thus, a.s., $\mu _ x ^ {\cA}$ is $I_x$-invariant, 
 hence for $\mu _ x ^ {\cA}$-a.e. $\xi$, the leafwise conditional measure $\mu _ \xi ^ {\cA, I_x}$ 
 is Haar (cf.~\cite[Prob.~6.27]{Einsiedler-Lindenstrauss-Clay}). By Lemma~\ref{lemma about 
 conditional leafwise measures}, it follows that $\mu _ x ^ {\mathcal{A},[\alpha]}$ is a.s. 
 Haar measure on $I_x$.
\end{proof}

\begin{Corollary}\label{main equality corollary}
\[D ^ {\mathcal{A}} _ \mu (\mathbf n, [\alpha]) = \Dinv (\mathbf n, [\alpha])\]
\end{Corollary}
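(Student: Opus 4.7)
The plan is essentially to read the equality off Corollary~\ref{invariance corollary}. That corollary has already identified $\mu_x^{\mathcal{A},[\alpha]}$ as a.s.\ proportional to the Haar measure on $I_x^{[\alpha]}$, while the latter is by definition the proportionality class of measures denoted $\invmeas{[\alpha]}{x}$. Hence, as elements of the space $PM_\infty^*(U_{[\alpha]})$ of proportionality classes of locally finite measures, we have $\mu_x^{\mathcal{A},[\alpha]}=\invmeas{[\alpha]}{x}$ for $\mu$-a.e.~$x$.

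The only remaining point is to note that both sides of the asserted identity depend only on the proportionality class of the leafwise measure they are computed from. I would verify this by the one-line observation that replacing a locally finite measure $\theta$ by $c\theta$ with $c>0$ alters the $\ell$th term
\[
\frac{-\log \theta(a^{-\ell\mathbf n}\Omega_0 a^{\ell\mathbf n})}{\ell}
\]
by $-\log c/\ell$, which vanishes as $\ell\to\infty$. Thus the limits in \eqref{equation defining entropy contribution} and \eqref{equation defining relative entropy contribution} do not depend on the chosen representative, and applying this to the common proportionality class $\mu_x^{\mathcal{A},[\alpha]}=\invmeas{[\alpha]}{x}$ yields $D^{\mathcal{A}}_\mu(\mathbf n,[\alpha])=\Dinv(\mathbf n,[\alpha])$.

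There is no real obstacle at this stage; the substantive work has already been carried out in the preceding lemmas, most crucially in Lemma~\ref{lemma used to show invariance}(3) and Lemma~\ref{lemma about conditional leafwise measures}, which combine to give Corollary~\ref{invariance corollary}. The corollary at hand is therefore simply a matter of matching definitions and observing the scaling invariance of the entropy contribution limits.
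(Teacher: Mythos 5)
Your proposal is correct and matches the paper's (one-line) proof exactly: the paper simply says the corollary ``follows immediately from the definitions and Corollary~\ref{invariance corollary}.'' Your additional remark about scaling-invariance of the limits in \eqref{equation defining entropy contribution} and \eqref{equation defining relative entropy contribution} is a sensible unpacking of ``follows from the definitions'' but adds nothing beyond what the paper implicitly relies on.
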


\begin{proof}
This follows immediately from the definitions and Corollary~\ref{invariance corollary}.
\end{proof}

We can now prove Theorem~\ref{symmetry theorem}:
\begin{proof} [Proof of Theorem~\ref{symmetry theorem}]
Let $\mathbf n$ be so that $\alpha(\mathbf n)>0$.
By~\eqref{consequence of entropy symmetry equation},
\begin{equation*}
\sum_ {[\beta]: \beta (\mathbf n) > 0} D _ \mu (\mathbf n, [\beta])=
\sum_ {[\beta]: \beta (\mathbf n) > 0} D _ \mu (-\mathbf n, [-\beta])
.\end{equation*}
Using $h_\mu(\mathbf n |\mathcal A)=h_\mu(-\mathbf n |\mathcal A)$ we get a similar identity
\begin{equation*}
\sum_ {[\beta]: \beta (\mathbf n) > 0} D ^{\mathcal A}_ \mu (\mathbf n, [\beta])=
\sum_ {[\beta]: \beta (\mathbf n) > 0} D ^{\mathcal A} _ \mu (-\mathbf n, [-\beta])
.\end{equation*}
But for $[\beta] \neq [\pm \alpha]$, we have that $D _ \mu (\mathbf n, [\beta]) = D _ \mu ^ {\mathcal{A}} (\mathbf n [\beta])$ hence by combining the above two displayed equations we obtain
\begin{equation*}
D _ \mu (\mathbf n, [\alpha])- D^{\mathcal A} _ \mu (\mathbf n, [\alpha]) = D _ \mu (-\mathbf n, [-\alpha])- D^{\mathcal A} _ \mu (-\mathbf n, [-\alpha])
,\end{equation*}
hence
\begin{align*}
D^{\mathcal A} _ \mu (\mathbf n, [\alpha])&= D _ \mu (\mathbf n, [\alpha]) -D _ \mu (-\mathbf n, [-\alpha])+ D^{\mathcal A} _ \mu (-\mathbf n, [-\alpha]) \\
&\geq D _ \mu (\mathbf n, [\alpha]) -D _ \mu (-\mathbf n, [-\alpha])
\end{align*}
By Corollary~\ref{main equality corollary}, we may conclude that
\begin{equation*}
\Dinv (\mathbf n, [\alpha]) \geq D _ \mu (\mathbf n, [\alpha]) -D _ \mu (-\mathbf n, [-\alpha]).
\end{equation*}
\end{proof}

We concludes this section with a slight extension of Theorem~\ref{symmetry theorem} that will be useful for us in the sequel.

\begin{Theorem} \label{extended symmetry theorem}
Suppose $G$ and $a ^ {\bullet}$ are as in Theorem~\ref{symmetry theorem}. Let $H < F$ be closed subgroups of $G$ with $H \lhd G$ and $F / H$ discrete. Assume furthermore that for any $[\alpha] \in [\Phi]$, the coarse Lyapunov subgroup $U _ {[\alpha]}$ intersects $H$ trivially. Let $\mu$ be an $a^{\bullet}$-invariant and ergodic probability measure on $G/F$. Then for $[\alpha] \in [\Phi]$ and $\mathbf n \in \Z ^ r$ satisfying $\alpha (\mathbf n) > 0$ it holds that

\begin{equation*}
\Dinv (\mathbf n,{[\alpha]}) \geq D _ \mu (\mathbf n,{[\alpha]}) - D _ \mu (-\mathbf n,{[-\alpha]})
.\end{equation*}
\end{Theorem}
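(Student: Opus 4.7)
The plan is to observe that the argument given for Theorem~\ref{symmetry theorem} carries over essentially verbatim to a probability measure $\mu$ on $G/F$, once one verifies that each coarse Lyapunov subgroup $U_{[\alpha]}$ acts locally freely on $G/F$. Establishing this local freeness is the key content of the hypotheses on $H$ and $F$.

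First I would verify local freeness. Fix $x = gF \in G/F$, whose stabilizer under the left $G$-action is $gFg^{-1}$. Since $H \lhd G$, we have $gHg^{-1} = H$, so $H \subseteq gFg^{-1}$ and the quotient $gFg^{-1}/H$ is a conjugate of the discrete group $F/H$, hence itself discrete in $G/H$. Consequently there will be a neighborhood $V$ of $1$ in $G$ such that $V \cap gFg^{-1} \subseteq H$, and the hypothesis $U_{[\alpha]} \cap H = \{1\}$ then gives $V \cap U_{[\alpha]} \cap gFg^{-1} = \{1\}$. This is precisely the local freeness of the $U_{[\alpha]}$-action at $x$.

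With local freeness in hand, the leafwise-measure machinery of Proposition~\ref{proposition defining leafwise measures}, the monotonic subordinate $\sigma$-algebras of Proposition~\ref{monotone subordinate algebra proposition}, the product structure of Theorem~\ref{product structure theorem}, and the decomposition $h_\mu(\mathbf n) = \sum_{[\alpha]:\alpha(\mathbf n)>0} D_\mu(\mathbf n,[\alpha])$ together with its relative version all apply to $\mu$ on $G/F$; their proofs depend only on the locally free action of $U_{[\alpha]}$ and the diagonalizable nature of the $a^\bullet$-action, not on $F$ being discrete. The argument of Section~3 then repeats: define $\cA$ as the $\sigma$-algebra generated by the Borel map $x\mapsto\mu_x^{[\alpha]}$; use the product structure to deduce $D_\mu(\mathbf n,[\beta]) = D_\mu^\cA(\mathbf n,[\beta])$ for every $[\beta] \neq [\pm\alpha]$; combine this with
\[\sum_{[\beta]:\beta(\mathbf n)>0} D_\mu^\cA(\mathbf n,[\beta]) = h_\mu(\mathbf n\mid\cA) = h_\mu(-\mathbf n\mid\cA) = \sum_{[\beta]:\beta(\mathbf n)>0} D_\mu^\cA(-\mathbf n,[-\beta])\]
and the analogous unconditional identity to isolate
\[D_\mu^\cA(\mathbf n,[\alpha]) \geq D_\mu(\mathbf n,[\alpha]) - D_\mu(-\mathbf n,[-\alpha]);\]
then invoke Corollary~\ref{main equality corollary} (whose proof likewise transfers) to identify $D_\mu^\cA(\mathbf n,[\alpha])$ with $\Dinv(\mathbf n,[\alpha])$.

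The main obstacle will simply be the bookkeeping to confirm that each preliminary result really does transfer to the $G/F$ setting. Once local freeness of the $U_{[\alpha]}$-action is established, every step of Section~3 uses only the general framework of leafwise measures on a locally compact second countable space equipped with a locally free unimodular group action intertwined with a hyperbolic automorphism, and hence proceeds without modification. No new ingredients are needed beyond the local freeness verification above.
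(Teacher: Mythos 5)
Your proposal follows the same route as the paper, which likewise observes that the hypotheses on $H$ and $F$ guarantee local freeness of each $U_{[\alpha]}$ on $G/F$ and then declares the proof of Theorem~\ref{symmetry theorem} to carry over verbatim (leaving the details to the reader); you have in fact supplied more detail than the paper on the local-freeness point. The only small thing worth noting is that local freeness as defined in \S2 requires a $\delta$ uniform over compact subsets of $G/F$, whereas your argument is pointwise; the uniform version follows by the same reasoning plus a short compactness argument (if $u_n\to 1$ in $U_{[\alpha]}$ stabilize $g_nF$ with $g_n$ in a compact set, then $g_n^{-1}u_ng_n\to 1$ in $F$, hence lies eventually in $H$ by discreteness of $F/H$, forcing $u_n\in H\cap U_{[\alpha]}=\{1\}$), so this does not affect the correctness of the approach.
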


Note that under the assumptions of Theorem~\ref{extended symmetry theorem}, for any $[\alpha] \in [\Phi]$ we have that
$U _ {[\alpha]}$ acts locally freely on $G / F$ so that if $\mu$ is an $a ^ {\bullet}$-invariant and ergodic probability measure on $G / F$ we can define $\mu _ x ^ {[\alpha]}$, \ $D_\mu (\mathbf n, [\alpha])$ and $\Dinv (\mathbf n,{[\alpha]})$ just as in the case of $G / \Gamma$ with $\Gamma$-discrete. Essentially this amounts to extending slightly the class of groups we consider: not just closed subgroups of linear groups but quotients of such groups by closed normal subgroups. The proof of the theorem in this case is identical to that of Theorem~\ref{symmetry theorem}. Adapting the results of \cite [\S7]{Einsiedler-Lindenstrauss-Clay} to this setting, in particular the proof of \eqref{entropy contribution formula} also poses no difficulty. We leave the details to the reader.

\section{Proof of Theorem~\ref{refined main theorem}}

A key difference between the positive characteristic and the zero characteristic case is that in zero characteristic unipotent groups have very few subgroups (see \cite{Einsiedler-Lindenstrauss-Mohammadi} for more details). 

Recall that our group $G$ is embedded in $M=\prod_{\ell=1}^m \SL (d, \localfield _ \ell)$, and that $U _ {[\alpha]} = G \cap M _ {[\alpha]}$ with $M _ {[\alpha]}$ a product of unipotent algebraic subgroups of $\localfield _ \ell$.
Enlarging $d$ if necessary, we may (and will) assumes that all the $\localfield _ \ell$ are distinct (recall that we have already assumed that they are all either $\R$ or $\Q _ p$ for appropriate $p$).

The invariance groups $I _ x ^ {[\alpha]}$ are closed subgroup of $M _ {[\alpha]}$. Moreover, since the map $x \mapsto \mu _ x ^ {[\alpha]}$ is measurable (cf.\ Proposition~\ref{proposition defining leafwise measures}) and since the map taking a measure to its right invariance group is also Borel measurable, we have that the map $x \mapsto I _ x ^ {[\alpha]}$ is measurable.
It follows from Proposition~\ref{transformation rule for leafwise measures} that for every $\mathbf n \in \Z^r$,
\begin{equation}\label{invariance transformation equation}
I_{a^{\mathbf n}.x}^{[\alpha]} = a^{\mathbf n} I_x^{[\alpha]} a^{-\mathbf n}
\end{equation}
a.e., hence by ergodicity of $\mu$ these invariance groups are either a.e.\ trivial (i.e.~$= \left\{ 1 \right\}$) or a.e.\ non-trivial.
In \cite{Einsiedler-Katok-II} the following has been shown using Poincare recurrence and the properties of unipotent groups over $\R$ and $\Q_p$:

\begin{Lemma}[{\cite[\S6]{Einsiedler-Katok-II}}]\label{connected algebraic lemma} Under the assumptions of Theorem~\ref{refined main theorem}, for any coarse Lyapunov exponent $[\alpha] \in [\Phi]$, for a.e.~$x$, the group $I _x ^ {[\alpha]}$ can be written as $\prod I_{(x,\ell)}$ with each $I_{(x,\ell)}$ a connected algebraic subgroup of $\SL(d,\localfield_\ell)\cap M_{[\alpha]}$. \end{Lemma}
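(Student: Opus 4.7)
The plan is to combine Poincar\'e recurrence, applied via the covariance \eqref{invariance transformation equation}, with the very rigid scaling action that the class-$\cA'$ assumption imposes on $M_{[\alpha]}$. Throughout, I regard $x \mapsto I_x^{[\alpha]}$ as a Borel-measurable assignment of closed subgroups of $M_{[\alpha]}$, valued in the compact metrizable space of closed subgroups equipped with the Chabauty topology.

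First I would establish the product decomposition across the distinct local fields $\localfield_\ell$. Choose $\mathbf n \in \Z^r$ with $\alpha(\mathbf n)>0$; a Lusin-type restriction together with Poincar\'e recurrence applied to $a^{\mathbf n}$ yields, for $\mu$-a.e.~$x$, a sequence $\mathbf n_k$ in $\{\alpha>0\}$ with $a^{\mathbf n_k}.x \to x$ along which $y \mapsto I_y^{[\alpha]}$ is Chabauty-continuous; hence by \eqref{invariance transformation equation} the conjugates $a^{\mathbf n_k} I_x^{[\alpha]} a^{-\mathbf n_k}$ converge back to $I_x^{[\alpha]}$ in the Chabauty topology. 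Under class-$\cA'$, on the $\R$-factor of $M_{[\alpha]}$ this conjugation is by a positive diagonal matrix whose eigenvalues tend to $\infty$, while on each $\Q_p$-factor it is by a diagonal matrix whose eigenvalues are unbounded powers of the fixed $\theta_p$. Since $\R$ and the various $\Q_p$ are pairwise incomparable as topological fields, a closed subgroup of the product $\prod_\ell \bigl(M_{[\alpha]} \cap \SL(d,\localfield_\ell)\bigr)$ that is Chabauty-stable under such independent, unbounded rescalings on each factor must split as the direct product of its intersections with the individual factors. This gives $I_x^{[\alpha]} = \prod_\ell I_{(x,\ell)}$ with $I_{(x,\ell)} := I_x^{[\alpha]} \cap \SL(d,\localfield_\ell)$.

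Second I would show that each factor $I_{(x,\ell)}$ is a connected $\localfield_\ell$-algebraic subgroup of the unipotent group $M_{[\alpha]} \cap \SL(d,\localfield_\ell)$. Inside this unipotent group, the exponential map is a polynomial isomorphism onto the Lie algebra, and the $a^{\mathbf n}$-conjugation becomes a linear map with eigenvalues that are positive reals (resp.~powers of $\theta_p$) whose logarithms are positive multiples of $\alpha(\mathbf n)$. Transporting the Chabauty recurrence of $I_{(x,\ell)}$ under this action to the Lie algebra, one obtains a closed subset of a finite-dimensional $\localfield_\ell$-vector space that arises as a Chabauty limit of its images under multiplication by an unbounded sequence of scalars in $\localfield_\ell^\times$; such a subset is forced to be an $\localfield_\ell$-linear subspace. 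Pushing back through $\exp$, $I_{(x,\ell)}$ is identified with the $\localfield_\ell$-points of a connected $\localfield_\ell$-algebraic unipotent subgroup.

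The step I expect to be the main obstacle is the $\Q_p$ case of this rigidity argument: closed subgroups of a $\Q_p$-unipotent group can be genuinely wild (for example, exponentials of $\Q$-subspaces of the Lie algebra which are not $\Q_p$-subspaces), and ruling these out depends crucially on having an unbounded cyclic rescaling group $\theta_p^\Z \subset \Q_p^\times$ at our disposal. This rigid $p$-adic scaling is precisely the geometric content of the class-$\cA'$ hypothesis, and it is the input on which the argument of \cite[\S6]{Einsiedler-Katok-II} ultimately rests.
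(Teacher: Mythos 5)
The paper does not actually give a proof of this lemma: it is quoted from the unpublished preprint \cite{Einsiedler-Katok-II}, with only a one-sentence indication that the argument there rests on Poincar\'e recurrence together with the structure of closed subgroups of unipotent groups over $\R$ and $\Q_p$. Your sketch is consistent in broad strategy with that indication (Poincar\'e recurrence giving Chabauty-recurrence of $x\mapsto I_x^{[\alpha]}$, then rigidity of closed subgroups under unbounded class-$\cA'$ expansion), so at the level of outline you appear to be following the intended route; there is, however, no in-text proof to compare against.

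That said, the proposal has genuine gaps at precisely the two places that carry all the content, and you acknowledge one of them without resolving it. For the product decomposition, the rescalings on the various $\ell$-factors are not independent --- they are all driven by the single return sequence $\mathbf n_k$ --- so ``pairwise incomparable topological fields'' is an intuition, not an argument: one must actually show the Chabauty limit kills any mixed (lattice-like or graph-like) contribution between places, e.g.\ by a covolume estimate, or by first reducing to the Lie-algebra picture. For the linearity step, conjugation by $a^{\mathbf n}$ on $\Lie(M_{[\alpha]}\cap\SL(d,\localfield_\ell))$ is not multiplication by a scalar: a single coarse class $[\alpha]$ generally contains several Lyapunov exponents $c\alpha$ with distinct $c>0$, so the linear action is a diagonalizable map with distinct expansion rates, and ``such a subset is forced to be an $\localfield_\ell$-linear subspace'' is the conclusion one wants, not a proof of it. One must rule out sums of proper $\Z_p$-submodules of the eigenspaces (over $\Q_p$) and lattice summands (over $\R$); you flag the $\Q_p$ case as the main obstacle --- correctly --- but stop short of supplying the argument. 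Finally, you should also say why the resulting $\localfield_\ell$-linear subspace is a Lie subalgebra (this follows from $I_{(x,\ell)}$ being a group via Campbell--Hausdorff), since otherwise $\exp$ of it need not be a subgroup. As it stands the proposal is a plausible road map, not a proof.
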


We exploit this to show that under the assumptions of Theorem~\ref{refined main theorem} the invariance groups of the leafwise measures on the coarse Lyapunov foliations have to be almost everywhere constant:

\begin{Lemma}\label{invariance lemma} Under the assumptions of Theorem~\ref{refined main theorem}, for any coarse Lyapunov exponent $[\alpha] \in [\Phi]$, there is a closed subgroup $I ^ {[\alpha]} \leq U _ {[\alpha]}$ so that $I _ x ^ {[\alpha]} = I ^ {[\alpha]}$ a.e. Moreover, this group is normalized by the group $a^{\bullet}$, and $\mu$ is $I^{[\alpha]}$-invariant.
\end{Lemma}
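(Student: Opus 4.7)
The plan is to combine the algebraic nature of $I^{[\alpha]}_x$ coming from Lemma~\ref{connected algebraic lemma} with Poincar\'e recurrence, exploiting the fact that under Class-$\cA'$ the conjugation action of $a^\bullet$ on $U_{[\alpha]}$ is a very rigid semisimple scaling action. The starting observation is the equivariance $I^{[\alpha]}_{a^{\mathbf n}.x} = a^{\mathbf n} I^{[\alpha]}_x a^{-\mathbf n}$ from~\eqref{invariance transformation equation}, together with the fact that, taking Lie algebras of the factors $I_{(x,\ell)}$ of Lemma~\ref{connected algebraic lemma}, the map $x \mapsto I^{[\alpha]}_x$ may be viewed as a Borel measurable map into a fixed finite disjoint union of Grassmannian varieties $\mathcal V$.

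The key dynamical input is that under Class-$\cA'$, for each real factor $a^\bullet$ embeds into a continuous one-parameter subgroup $\sigma_t$ such that $\Ad(\sigma_t)$ is diagonalizable on $\Lie M_{[\alpha],\R}$ with eigenvalues $e^{c_\beta\alpha(\mathbf n^*)t}$ for positive constants $c_\beta$ indexed by the Lyapunov exponents $\beta\in[\alpha]$, with an analogous statement on each $\Q_p$-factor using powers of $\theta_p$. Consequently, for any $\mathbf n$ with $\alpha(\mathbf n)>0$ and any $V\in\mathcal V$, the iterates $\Ad(a^{k\mathbf n})V$ converge in $\mathcal V$ as $k\to\infty$ to the associated graded $V_\infty$ of $V$ with respect to the weight filtration; the limit $V_\infty$ is $\Ad(a^\bullet)$-invariant, and $V=V_\infty$ if and only if $V$ is itself graded.

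Fix such an $\mathbf n$. By Lusin's theorem, for any $\epsilon>0$ there is a compact $K\subset X$ with $\mu(K)>1-\epsilon$ on which $x\mapsto I^{[\alpha]}_x$ is continuous. Poincar\'e recurrence for $a^{\mathbf n}$ produces, for $\mu$-a.e.\ $x\in K$, a sequence $k_j\to\infty$ with $a^{k_j\mathbf n}.x\in K$ and $a^{k_j\mathbf n}.x\to x$. Continuity gives $I^{[\alpha]}_{a^{k_j\mathbf n}.x}\to I^{[\alpha]}_x$, while equivariance identifies the left side with $a^{k_j\mathbf n}I^{[\alpha]}_xa^{-k_j\mathbf n}$, which by the previous paragraph converges to $(I^{[\alpha]}_x)_\infty$ in $\mathcal V$. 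Hence $I^{[\alpha]}_x$ equals its own associated graded, so is normalized by $a^\bullet$. Plugging this back into~\eqref{invariance transformation equation} yields $I^{[\alpha]}_{a^{\mathbf n}.x}=I^{[\alpha]}_x$ a.s., and ergodicity then forces $I^{[\alpha]}_x\equiv I^{[\alpha]}$ for some closed subgroup $I^{[\alpha]}\leq U_{[\alpha]}$, necessarily normalized by $a^\bullet$.

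For $\mu$-invariance under $I^{[\alpha]}$: by definition of $I^{[\alpha]}_x$, the leafwise measures $\mu^{[\alpha]}_x$ are a.s.\ right-invariant under $I^{[\alpha]}$, which combined with the cocycle relation of Proposition~\ref{proposition defining leafwise measures}(1) shows that $y\mapsto\mu^{[\alpha]}_y$ is a.s.\ constant along $I^{[\alpha]}$-orbits; a standard argument using an $a^\bullet$-monotone $\sigma$-algebra subordinate to $U_{[\alpha]}$ as in~\cite[\S6]{Einsiedler-Lindenstrauss-Clay} then upgrades this to $I^{[\alpha]}$-invariance of $\mu$ itself. The main obstacle is the dynamical step of the second paragraph: verifying uniformly over the real and $p$-adic factors that the only points of $\mathcal V$ recurrent under the scaling $\Ad(a^{\mathbf n})$ are the graded subspaces. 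This is exactly where Class-$\cA'$ is essential, since without positivity (resp.\ the powers-of-$\theta_p$ condition) conjugation could have compact factors along which every subspace is recurrent, destroying the attractor description of the forward limit.
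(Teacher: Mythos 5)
Your proposal is correct and follows essentially the same strategy as the paper: use the algebraicity from Lemma~\ref{connected algebraic lemma} to view $x \mapsto I_x^{[\alpha]}$ as a Borel map into an algebraic variety, then combine the Class-$\cA'$ dynamics with Poincar\'e recurrence to show the map is a.e.\ constant and its image is $a^\bullet$-normalized, and finally pass from Haar-ness of leafwise measures to invariance of $\mu$. The only cosmetic difference is that you work directly in the Grassmannian via the weight filtration and ``associated graded,'' whereas the paper passes to Pl\"ucker coordinates in $P(\wedge^k \sl(d,k_\ell))$ and invokes the Margulis--Tomanov observation that under a class-$\cA'$ element every point of a projective orbit closure attracts to a fixed point; these are equivalent formulations of the same convergence. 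Do note that your final step (from right-$I^{[\alpha]}$-invariance of $\mu_x^{[\alpha]}$ to $I^{[\alpha]}$-invariance of $\mu$) is compressed: the paper routes this through Corollary~\ref{invariance corollary} and Lemma~\ref{lemma about conditional leafwise measures}, which in turn rely on the Borel selector argument of Lemma~\ref{lemma used to show invariance}; your phrase ``a standard argument using an $a^\bullet$-monotone $\sigma$-algebra'' is pointing in the right direction but elides this nontrivial machinery.
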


\begin{proof}
Fix $1 \leq \ell \leq m$. To the unipotent algebraic subgroup $I _ {(x, \ell)}$ of $\SL (d, \localfield _ \ell)$ there corresponds a Lie subalgebra in $\sl (d, \localfield _ \ell)$. By ergodicity and \eqref{invariance transformation equation} it is clear that $\dim (I _ {(x, \ell)})$ is a.e.\ constant, say $k$. Therefore on a set of full measure there is a 1:1 correspondence between $I _ {(x, \ell)}$ and a corresponding homethety class of pure wedge vector $\bar v_{(x, \ell)} \in P(\wedge ^ k \sl (d, \localfield _ \ell))$. The equation \eqref{invariance transformation equation} implies that for every $\mathbf n \in \Z ^ r$
\begin{equation*}
\bar v_{(a ^ {\mathbf n}.x, \ell)} = (\wedge ^ q\! \Ad)(a^{\mathbf n}).\bar v_{(x, \ell)}
.\end{equation*}

By ergodicity of $\mu$, and since $A$ locally compact, to show that $I _ x ^ {[\alpha]}$ is a.e. constant, it is enough to show that for every $a \in A$,
\begin{equation*}
I _ x ^ {[\alpha]} = I _ {a x} ^ {[\alpha]}
.\end{equation*}
Recall that for every $\mathbf n$, the element $a ^ {\mathbf n}$ is of class-$\cA'$.
This implies that $(\wedge ^ q\! \Ad)(a^{\mathbf n})$ is of class- $\mathcal{A} '$. A basic property of elements $g$ of class-$\mathcal{A}'$ is that for any action of the ambient group on a projective space, for any vector $\bar v$ in the project to space, $g ^ k \bar v$ tends to a $g$ invariant point in the projective space (cf.%
\footnote{That proposition deals with a slightly different class of elements that Margulis and Tomanov calls class-$\cA$, but the proof  carries out without any modifications, and indeed whether one uses class-$\cA$ or class-$\cA'$ is purely a matter of taste.}%
~\cite[Prop.~2.2]{Margulis-Tomanov}).
In particular, $(\wedge ^ q\! \Ad)(a^{k\mathbf n}) \bar v _ { (x, \ell)} = \bar v _ {(a^{k\mathbf n}. x, \ell)}$ converges to a $(\wedge ^ q\! \Ad)(a^{k\mathbf n})$-invariant point in the appropriate projective space.
But then Poincare recurrence for the $\Z$-action generated by $a^{\mathbf n}$ implies that
$\bar v _ { (x, \ell)}$ was fixed by $(\wedge ^ q\! \Ad)(a^{\mathbf n})$ to begin with.
Since $\bar v _ { (x, \ell)}$ uniquely determines the algebraic group $I _ {(x, \ell)}$ it follows that
\begin{equation*}
I _ {(x, \ell)} = I _ {(a^{\mathbf n}.x, \ell)}
\end{equation*}
and since this is true for every $\ell$ it follows that $I_x ^ {[\alpha]} = I _ {a ^ {\mathbf n} . x} ^ {[\alpha]}$ and the first claim of the lemma follows.

To show invariance of $\mu$ under $I ^ {[\alpha]}$ we apply Corollary~\ref{invariance corollary}. Let $\mathcal{A}$ be a $\sigma$-algebra corresponding to the Borel map $x \mapsto \mu _ x ^ {[\alpha]}$.
According to the lemma, for $\mu$-almost every $x$, the conditional measure $\mu ^ {\mathcal{A}} _ x$ is a probability measure with the properties that for $\mu ^ {\mathcal{A}} _ x$-a.e. $\xi$ the leafwise measure
$\mu ^ {\mathcal{A}, [\alpha]} _ \xi = \left (\mu ^ {\mathcal{A}} _ x \right) ^ {[\alpha]} _ \xi$
is the Haar measure on $I ^ {[\alpha]}$.
By Lemma~\ref{lemma about conditional leafwise measures} and \cite[Prop.~4.3]{Lindenstrauss-quantum} (or \cite[Prob.~6.27]{Einsiedler-Lindenstrauss-Clay}) it follows that $\mu ^ {\mathcal{A}} _ x$ is $I ^ {[\alpha]}$-invariant, hence so is $\mu = \int \mu ^ {\mathcal{A}} _ x \,d \mu (x)$.
\end{proof}

\Head{Remark}
We note that the same argument will work also in the slightly more general setup of $\mu$ an $a^{\bullet}$-invariant and ergodic probability measure on $G/F$ where $F$ is as in Theorem~\ref{extended symmetry theorem}, as long as $\localfield_\ell$ are all $\R$ or $\Q_p$ (for possibly more than one choice of $p$) and $a^\bullet$ satisfies the class-$\cA'$ assumption.

\medskip

Let $J_u$ be the closed group generated by all one-parameter unipotent groups preserving $\mu$.  Let $J$ be the group generated by $J_u$ and $a^{\bullet}$. Note that by Lemma~\ref{connected algebraic lemma} for all $[\alpha] \in [\Phi]$ the group $I^{[\alpha]}$ is generated by unipotent one parameter groups, hence by Lemma~\ref{invariance lemma} we have that $ I ^ {[\alpha]} \leq J_u$.

\begin{Lemma}\label{Margulis-Tomanov application lemma} Under the assumptions of Theorem~\ref{refined main theorem}, there are closed subgroups $H,L$ of $G$ with $H \lhd L$, \ $J_u \leq H$ and $J \leq L$ so that
\begin{enumerate}
\item $\mu$ is $H$-invariant
\item every $x \in \supp \mu$ has a periodic $H$-orbit
\item $\mu$ is supported on a single $L$-orbit.
\end{enumerate}
\end{Lemma}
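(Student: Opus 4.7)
The plan is to apply the Ratner--Margulis--Tomanov measure classification to the $J_u$-ergodic decomposition of $\mu$. Since every local field $\localfield_\ell$ is $\R$ or $\Q_p$ and $J_u$ is, by definition, the closed group generated by the one-parameter unipotent subgroups of $G<M$ preserving $\mu$, the theorems of \cite{Ratner-Annals,Ratner-padic,Margulis-Tomanov} apply: writing the $J_u$-ergodic decomposition
\[
\mu = \int \mu_y\,dQ(y),
\]
for $Q$-a.e.\ $y$ the component $\mu_y$ is the unique $L_y$-invariant probability measure on a closed periodic orbit $L_y.y$, for some closed subgroup $L_y\leq G$ with $J_u\leq L_y$. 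The map $y\mapsto L_y$, and correspondingly $y\mapsto \mathfrak{l}_y:=\Lie(L_y)\subset\mathfrak{g}$, is Borel measurable.

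The crucial step is to show that $L_y$ is $\mu$-a.s.\ equal to a \emph{fixed} closed subgroup $H$, and that $a^{\bullet}$ normalizes $H$. Since $a^{\mathbf n}$ permutes the $J_u$-ergodic components via $\mu_{a^{\mathbf n}.y}=(a^{\mathbf n})_*\mu_y$, we have almost surely
\[
L_{a^{\mathbf n}.y}=a^{\mathbf n}L_y a^{-\mathbf n},\qquad \mathfrak{l}_{a^{\mathbf n}.y}=\Ad(a^{\mathbf n})\mathfrak{l}_y.
\]
By $a^{\bullet}$-ergodicity, $\dim\mathfrak{l}_y$ is a.s.\ constant, so $\mathfrak{l}_y$ can be encoded as a homothety class of a pure wedge vector $\bar w_y$ in an appropriate projective space. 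The argument of Lemma~\ref{invariance lemma} then applies verbatim: by the Class-$\cA'$ hypothesis, each $(\wedge^\bullet\Ad)(a^{\mathbf n})$ is of class-$\cA'$, and combining the convergence of its iterates towards a fixed point in projective space (as in \cite[Prop.~2.2]{Margulis-Tomanov}) with Poincar\'e recurrence along the $a^{\mathbf n}$-orbit of $y$ forces $\bar w_y$ to already be fixed by $(\wedge^\bullet\Ad)(a^{\mathbf n})$. Hence $\Ad(a^{\mathbf n})\mathfrak{l}_y=\mathfrak{l}_y$, and combined with the equivariance displayed above this yields $\mathfrak{l}_{a^{\mathbf n}.y}=\mathfrak{l}_y$; another application of $a^{\bullet}$-ergodicity gives $\mathfrak{l}_y=\mathfrak{h}$ a.s.\ for a fixed Lie subalgebra $\mathfrak{h}$, with $a^{\bullet}$ normalizing the corresponding connected subgroup. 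After absorbing any non-identity components of $L_y$ --- which lie in the algebraic normalizer of the identity component and are treated by the same projective-Poincar\'e mechanism applied to component groups --- one obtains a fixed closed subgroup $H\geq J_u$, normalized by $a^{\bullet}$, with $L_y=H$ a.e.

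With $H$ so defined, (1) and (2) are immediate from $\mu=\int \mu_y\,dQ(y)$, which is now a mixture of $H$-Haar measures on periodic $H$-orbits. For (3), let $L$ be the closed subgroup of $G$ generated by $H$ and $a^{\bullet}$; since $a^{\bullet}$ normalizes $H$ we have $H\lhd L$ and $J=\langle J_u,a^{\bullet}\rangle\leq L$. The Borel map $y\mapsto L.y$ is $L$-invariant, hence $a^{\bullet}$-invariant, so by $a^{\bullet}$-ergodicity of $\mu$ it is $\mu$-a.s.\ constant; that is, $\mu$ is supported on a single $L$-orbit. The main obstacle is the second step --- promoting ``the $L_y$'s lie in a single conjugacy class'' (which is formal from ergodicity) to ``the $L_y$'s are equal'' --- which is exactly where the Class-$\cA'$ assumption is essential, in complete analogy with the treatment of the invariance groups $I_x^{[\alpha]}$ in Lemma~\ref{invariance lemma}.
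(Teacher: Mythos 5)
Your proposal is correct in spirit but takes a genuinely different and more laborious route than the paper. The paper's proof is essentially a one-line citation: since $\mu$ is $J$-invariant and $J$-ergodic (the latter because $a^{\bullet} \leq J$ and $\mu$ is $a^{\bullet}$-ergodic), the conclusion follows directly from the main theorem of Margulis and Tomanov's \emph{Measure rigidity for almost linear groups} \cite{Margulis-Tomanov-2}, which is tailored to exactly this situation --- $F$-invariant and $F$-ergodic measures where $F$ contains a group generated by unipotents --- and delivers the $H \lhd L$ structure in one stroke, with no need for the Class-$\cA'$ hypothesis at this stage. You instead reconstruct a version of that result from the raw unipotent measure classification (applied to the $J_u$-ergodic decomposition) plus the rigidity mechanism of Lemma~\ref{invariance lemma}. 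This is a legitimate alternative and correctly identifies the crux (passing from ``$L_y$ constant up to conjugacy'' to ``$L_y$ constant''), but it pays two prices. First, it invokes Class-$\cA'$, making the argument less general than the cited theorem. Second, and more substantively, there are real gaps in the execution: the ``absorbing non-identity components'' step is only waved at, and is delicate --- the $L_y$ produced by Ratner/Margulis-Tomanov are closed but need not be algebraic, so encoding them entirely through their Lie algebra and then ``treating component groups similarly'' does not obviously go through, especially over $\Q_p$ where the component structure is less tame than over $\R$; and item~(2) of the lemma asserts that \emph{every} $x \in \supp\mu$ has a periodic $H$-orbit, whereas your construction only yields this for $Q$-a.e.\ ergodic component, leaving a measure-theoretic-to-topological upgrade unaddressed (which Margulis--Tomanov-2 also supplies). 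If you wanted to make your route airtight, you would essentially end up re-proving the structural part of \cite{Margulis-Tomanov-2}, which is why the paper simply cites it.
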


\begin{proof}
By definition, the measure $\mu$ is $J$-invariant. Since $\mu$ is $a^ {\bullet}$-ergodic and $a ^ {\bullet} \leq J$, it follows that $\mu$ is $J$-ergodic. The statement now follows from the main result of Margulis and Tomanov's paper \cite{Margulis-Tomanov-2}.\end{proof}

\noindent
 We note that the main ingredient used by Margulis and Tomanov in \cite{Margulis-Tomanov-2} is a measure classification result \cite{Margulis-Tomanov, Ratner-padic} extending Ratner's Measure Classification Theorem \cite{Ratner-Annals, Ratner-Acta} to the $S$-arithmetic setting.

 \medskip

\begin{proof} [Proof of Theorem~\ref{refined main theorem}]
Without loss of generality we may (and will for the remainder of this section) assume $[1]_\Gamma \in \supp \mu$, and since $\mu$ is supported on a single $L$-orbit we may as well assume $L=G$ and $L>\Gamma$. Also, by Theorem~\ref{symmetry theorem} we have that if $J_u$ is trivial, equation \eqref{quotient symmetry equation} holds for $H=\{1\}$, hence we may assume that $J_u$ is non-trivial.
By Lemma~\ref{Margulis-Tomanov application lemma} we know that $H.[1]_\Gamma$ is periodic, i.e. that $H \cap \Gamma$ is a lattice in $H$. By \cite[Thm.~1.13]{Raghunathan-book} we have that $H\Gamma$ is closed, hence if $\pi:L\to L/H$ is the natural projection, $\Lambda = \pi(H\Gamma)=\pi(\Gamma)$ is a discrete subgroup of $L/H$.

Let $H _ u$ denote the subgroup of $H$ generated by one parameter unipotent groups. It is clear from the definition that since $H \lhd L$ conjugation by elements of $L$ preserves the class of unipotent one parameter subgroups of $H$, hence $H_u \lhd L$.
Recall that $J _ u \leq H _ u$, in particular by assumption $H _ u$ is nontrivial. From the definition it is clear $H _ u$ has the form of a product of (possibly trivial) subgroups $H_{u,\ell} < \SL (d, k_\ell)$ for $1 \leq \ell \leq m$, with each $H_{u,\ell}$ generated by one parameter unipotent groups. These $H_{u,\ell}$ are essentially algebraic: if $ M _ \ell$ is the Zariski closure of $H _ {u,\ell}$, then the radical of $M _ \ell$ equals its unipotent radical (otherwise $M_\ell$ would have contained an algebraic proper subgroup that contain all the unipotent elements of $M_\ell$ --- in contradiction to the definition of $M_\ell$). Moreover this also implies that the Lie algebras of $H _ {u,\ell}$ and $M _ \ell$ coincide, hence we have that $H _ {u,\ell} = M _ \ell ^ +$ --- the closed subgroup of $M _ \ell$ generated by one parameter unipotent subgroups\footnote{We use the notation $M^+$ only for algebraic groups $M$.}. It follows from \cite[Thm.~I.2.3.1]{Margulis-book} that $H _ {u,\ell}$ has finite index in~$M _ \ell$.%

Let $M=\prod_\ell  M_\ell$. Since $L$ normalizes $H _ u$ it also normalizes its Zariski closure $M$, hence $L$ is a subgroup of the normalizer $N = \prod_ \ell N _ \ell $ of $M$ in $\prod _{\ell = 1 } ^ m \SL (d, \localfield _ \ell)$. As quotients of algebraic groups we can embed for every $\ell$ the group $N _ \ell/M _ \ell$ into some $\SL (d _ \ell, \localfield _ \ell)$, and taking $d'=\max(d_\ell)$ we can therefor view $L/M$ as a closed subgroup of $\prod_\ell \SL(d',\localfield_\ell)$.
Let $\pi _ u$ denote the natural projection map $L \to L/H_u$ and $\pi _ M$ the natural projection $N \to N / M$. Then the induced $ a ^ {\bullet}$-action on $(L/H)/\Lambda$ is isomorphic to the induced $a ^ {\bullet}$-action on $(L/H_u)/\Lambda '$ with $\Lambda '$ the closed subgroup of $L / H _ u$ given by $\pi _ u (\Gamma H)$. The group $\Lambda '$ is not necessarily discrete, but is discrete modulo the normal subgroup $H / H _ u$ of $L / H _ u$.

Since $H_u$ has finite index in $M$, the space $(L/H_u)/\Lambda '$ is in turn a finite extension of $(L/M)/\Lambda ''$ with $\Lambda '' = \pi _ M (\Gamma H)$. The induced action of $a ^ {\bullet}$ on $(L/M)/\Lambda ''$, equipped with the measure $\mu '' = (\pi_M) _* \mu$ satisfies the condition of Theorem~\ref{extended symmetry theorem}, and the coarse Lyapunov subgroups of $L / M$ coincide with the images of the coarse Lyapunov subgroups $U _ {[\alpha]}$ of $L$ under $\pi _ M$.

The measure $\mu ''$ cannot be invariant under any one parameter unipotent subgroup of $L/M$, for if $u'_t = \exp (\mathbf u' t) $ were such a subgroup with $\mathbf u' \in \Lie (L / M)$ nilpotent then in view of the fact that $M$ is algebraic we can find a nilpotent $\mathbf u$ so that $d \pi _ M (\mathbf u) = \mathbf u '$. The invariance of $\mu''$ under $u'_t$ implies the same for $\mu ' = (\pi _ u) _ {*} \mu $ on $(L / H _ u) / \Lambda '$. Since $\mu$ is $H _ u$-invariant, it follows that $\mu$ will be invariant under the one parameter unipotent subgroup $u _ t$ of $L$. But if this were so, $u _ {\bullet}$ would have been contained in $J _ u$, hence in $H _ u$, hence $\mathbf u '$ would be zero --- in contradiction.

Therefore applying Theorem~\ref{extended symmetry theorem}, we conclude that for every coarse Lyapunov exponent $[\alpha]$
\begin{equation*}
D _ { \mu ''} (\mathbf n, [\alpha]) = D _ { \mu ''} (-\mathbf n, [-\alpha])
.\end{equation*}
By Proposition~\ref{finite-to-one proposition} this property is preserved by finite-to-one extensions, hence
\begin{equation*}
D _ {\mu '} (\mathbf n, [\alpha]) = D _ {\mu '} (-\mathbf n, [-\alpha]).
\end{equation*}
This identity implies the theorem in view of the isomorphism between the $\pi \circ a ^ {\bullet}$-action on $(L/H)/\Lambda$ and the $\pi _ u \circ a ^ {\bullet}$-action on $(L/H_u)/\Lambda '$.
\end{proof}

\section{Proof of Theorem~\ref{affine group theorem}}
Recall the notations in Theorem~\ref{affine group theorem}; in particular, $G = \SL (n, \R) \ltimes \R ^ n$, \ $\Gamma = \SL (n, \Z) \ltimes \prod \Z ^ n$, \ $A$ is the maximal diagonalizable subgroup of $\SL (n, \R) < G$, and $\mu$ is an $A$-invariant and ergodic measure on $G / \Gamma$.
Let $\pi$ denote the natural map $G \to \SL(n,\R)$ as well as the induced map $G / \Gamma \to \SL (n, \R) /\SL(n,\Z)$.
Let $\bar \mu = \pi_*\mu$. There are two cases:
\begin{itemize}
\item \textbf{Positive base entropy}: there is some $a \in A$ for which $h _ {\bar \mu} (a) > 0$
\item \textbf{Zero base entropy}: $h _ {\bar \mu} (a) = 0$ for every $a \in A$.
\end{itemize}

The first case has essentially been already been taken care of in \cite{Einsiedler-Katok-Lindenstrauss} and \cite{Einsiedler-Lindenstrauss-joinings-2}. Indeed, by \cite{Einsiedler-Katok-Lindenstrauss} in this case the measure $\bar \mu$ is homogeneous. Moreover one can explicitly list the possible $A$-invariant and ergodic homogeneous measures (\cite[\S6]{Lindenstrauss-Barak}; cf.~also \cite{Einsiedler-Lindenstrauss-Michel-Venkatesh} for a related discussion): when $n$ is a prime, the only possibility is that $\bar \mu$ is Haar measure on $\SL (n, \R) / \SL (n, \Z)$. When $n$ is not a prime, there can be additional intermediate cases, corresponding to degree $d$ totally real extensions $K$ of $\Q$ for $d \mid n$. More explicitly, it follows from the results of \cite[\S6]{Lindenstrauss-Barak}, that such measures are supported on an orbit $L.[g_1]$ of the reductive group
\begin{equation*} L = \left (\prod_ {i=1}^{n} \GL (n/d, \R) \right) \cap \SL (n, \R)
\end{equation*}
with $g_1 \Z ^ d$ homothetic to a finite index sublattice of the lattice $\mathcal{O}_K \otimes \Z^{n/d}$,  where $\mathcal{O}_K$ is the ring of integers of the totally real field $K$, and we view $\mathcal{O}_K$ as a lattice in $\R ^ d$ in the usual way --- i.e. if $\tau _ 1, \dots, \tau _ d$ are the $d$ distinct field embeddings of $K$ in $\R$, we identify $\mathcal{O}_K$ with the lattice
\begin{equation*} \left\{ (\tau _ 1 (\mathbf n), \dots, \tau _ d (\mathbf n)): \mathbf n \in \mathcal{O} _ K \right\} .
\end{equation*}
Note that the case $d=n$ is also meaningful, but corresponds to the case of $\bar \mu$ being the natural measure on an $A$-periodic orbit which is excluded according to our entropy assumption.

Let $L _ 1 = [L, L] = \prod_ {i = 1 } ^ {n / d} \SL (n/d, \R)$, and let $A_1 = A \cap U _ 1$ and $A _ 2 = A \cap C_G(L)$ (with $C_G(L)$ the centralizer of $L$ in $G$). Then for every $x \in L.[g_1] = \supp \bar \mu$, we have that $A_2 .x $ is periodic. The stabilizer $\Delta$ of $x$ in $A_2$ does not depend on $x$ --- indeed, it is given by $g_1 ^{-1} A_2 g_1 \cap \SL (n, \Z)$ and is commensurable to the image of $\mathcal{O} _ K ^ {\times}$ under the map $\mathbf n \mapsto \operatorname{diag} (\tau _ 1 (\mathbf n), \dots, \tau _ d (\mathbf n))$. Moreover $\bar \mu$ has a very simple ergodic decomposition with respect to $A _ 1$: if $\bar \mu _ 1$ is the uniform measure on the periodic orbit $L_1[g_1]$ then $\bar \mu = \int_ {A _ 2 / \Delta} h. \bar \mu _ 1 \,d h$; by the Howe-Moore ergodicity theorem $\bar \mu _ 1$ is $A _ 1$-ergodic.

Since the case of $d=n$ has been excluded, the group $ \tilde L _ 1 = g _ 1 ^{-1} L _ 1 g _ 1\ltimes  \R ^ n$ is a perfect algebraic group. Since $L_1[g_1]$ was periodic in $\SL(n,\R)/\SL(n,\Z)$, the group $\tilde L_1$ is also defined over $\Q$, and $\Gamma \cap L_1$ is an arithmetic lattice in it. Letting $\mu = \int \mu _ \xi \,d \xi$ denote the ergodic decomposition of $\mu$ with respect to $A _ 1$, it is clear that for a.e.~$\xi$ there would be some $h \in A _ 2$ (unique up to $\Delta$) so that $\pi _ {*} (\mu _ \xi) = h . \bar \mu _ 1$. We can now apply \cite[Thm.~1.6]{Einsiedler-Lindenstrauss-joinings-2} to conclude that the measures $\mu_\xi$ are all homogeneous, i.e. supported on a single orbit of a group $M \leq \SL (n, \R) \ltimes \R ^ n$ with $\pi (M) = L_1$.
Moreover e.g.~by Poincar\'e recurrence (or by noting that $ g _ 1 ^{-1} L _ 1 g _ 1$ acts irreducibly on $\R^n/\Z^n$, hence either $\pi$ is injective on $M$ or $M=\SL (n, \R) \ltimes  \R ^ n$) the group $M$ is normalized by $A_2$, so $\bar \mu$ is a homogeneous measure supported on a single $A_2 M$-orbit. Note that we have essentially classified which homogeneous measures may occur in the positive base entropy case.

There remains the zero base entropy case. We will parameterize the acting group by vectors $\mathbf m \in \Z ^ n$ with $\sum _ i m _ i = 0$, and set
\begin{equation*}
a ^ {\mathbf m} = \begin{pmatrix} e^{m_1}& \\ & e^{m_2} \\&&\ddots \\&&&e^{m_n} \end{pmatrix}
.\end{equation*}
There are exactly $n^2$ coarse Lyapunov exponents $[\alpha]$ that play a role for the action of $a ^ {\bullet}$ on $G / \Gamma$ (i.e.~for which $U_{[\alpha]}$ is nontrivial), for each of which $\dim (U_{[\alpha]})=1$:
$n(n-1)$ coarse Lyapunov exponents for which
$U_{[\alpha]} < \SL (n, \R) $, corresponding to the functionals $\phi_{i,j}:\mathbf m \mapsto m_i - m_j$ for $i \neq j$, and $n$ coarse Lyapunov exponents for which $U_{[\alpha]}$ is a subgroup of the unipotent radical of $G$ corresponding to the functionals $\phi_i:\mathbf m \mapsto m_i$.

By \cite [Prop.~6.4]{Einsiedler-Lindenstrauss-joinings-2}, if $D _ \mu (\mathbf m, [\phi_{i,j}]) \neq 0$ for some $i \neq j$ then $D _ {\bar \mu} (\mathbf m, [\phi_{i,j}]) \neq 0$ and hence we have positive base entropy. Therefore we may assume $D _ \mu (\mathbf m, [\phi_{i,j}])=0$ for all $i$, $j$.

It follows that only $D _ \mu (\mathbf m, [\phi_{i}])$ for $i=1,\dots,n$ can be nonzero, and since for some $\mathbf m$ we have that $h _ \mu (\mathbf m)>0$, by \eqref{entropy contribution formula} for at least one $i$ we have that $D _ \mu (\mathbf m, [\phi_{i}])>0$. However, the only way to satisfy \eqref{consequence of entropy symmetry equation} for all relevant\footnote {Recall that for convenience we take $\mathbf m \in \Z ^ n$ with $\sum m _ i = 0$.} parameters $\mathbf m$ is if for \emph{all} $i$ we have $D _ \mu (\mathbf m, [\phi_{i}])>0$ (indeed these contributions must equal to one another, though we will not need to make use of that). 
The key point is that unlike the $[\phi_{i,j}]$, for $[\phi_i]$ the opposite coarse Lyapunov exponent $[-\phi_i]$ does not appear in $G$ (in other words, $U_{[-\phi_i]}$ is trivial), hence by Theorem~\ref{symmetry theorem} the invariance group $I _ \mu ^ {[\phi_i]}$ is nontrivial\footnote{In Theorem~\ref{symmetry theorem} the measure $\mu$ is assumed to be ergodic under $a^{\bullet}$, whereas we only know $\mu$ is ergodic under $A$. Since $a^{\bullet}$ is cocompact in $A$ this makes very little difference; in particular, a reader worried about this slight discrepancy may take the ergodic decomposition of $\mu$ with respect to the action of $a^{\bullet}$, and prove invariance of $\mu$ under the unipotent radical of $G$ by showing it on each ergodic component seperately.}. In this case the only nontrivial closed subgroup of $U_{[\phi_i]}$ with arbitrarily small and arbitrarily large elements is $U_{[\phi_i]}$ itself, hence as in the proof of Theorem~\ref{refined main theorem} the measure $\mu$ is invariant under the group generated by all the $U_{[\phi_i]}$, i.e.~by the unipotent radical of $G$, which implies the second case of Theorem~\ref{affine group theorem}.


\def\cprime{$'$}
\begin{bibdiv}
\begin{biblist}

\bib{Einsiedler-Katok}{article}{
      author={Einsiedler, Manfred},
      author={Katok, Anatole},
       title={Invariant measures on {$G/\Gamma$} for split simple {L}ie groups
  {$G$}},
        date={2003},
        ISSN={0010-3640},
     journal={Comm. Pure Appl. Math.},
      volume={56},
      number={8},
       pages={1184\ndash 1221},
        note={Dedicated to the memory of J\"urgen K. Moser},
      review={\MR{1 989 231}},
}

\bib{Einsiedler-Katok-II}{unpublished}{
      author={Einsiedler, Manfred},
      author={Katok, Anatole},
       title={Rigidity of measures -- the high entropy case, and non-commuting
  foliations},
        date={2004},
        note={preprint, 54 pages},
}

\bib{Einsiedler-Katok-Lindenstrauss}{article}{
      author={Einsiedler, Manfred},
      author={Katok, Anatole},
      author={Lindenstrauss, Elon},
       title={Invariant measures and the set of exceptions to {L}ittlewood's
  conjecture},
        date={2006},
        ISSN={0003-486X},
     journal={Ann. of Math. (2)},
      volume={164},
      number={2},
       pages={513\ndash 560},
      review={\MR{MR2247967}},
}

\bib{Einsiedler-Lindenstrauss-Clay}{incollection}{
      author={Einsiedler, M.},
      author={Lindenstrauss, E.},
       title={Diagonal actions on locally homogeneous spaces},
        date={2010},
   booktitle={Homogeneous flows, moduli spaces and arithmetic},
      series={Clay Math. Proc.},
      volume={10},
   publisher={Amer. Math. Soc.},
     address={Providence, RI},
       pages={155\ndash 241},
      review={\MR{2648695 (2011f:22026)}},
}

\bib{Einsiedler-Lindenstrauss-split}{article}{
      author={Einsiedler, Manfred},
      author={Lindenstrauss, Elon},
       title={On measures invariant under tori on quotients of semisimple
  groups},
        date={2015},
        ISSN={0003-486X},
     journal={Ann. of Math. (2)},
      volume={181},
      number={3},
       pages={993\ndash 1031},
         url={http://dx.doi.org/10.4007/annals.2015.181.3.3},
      review={\MR{3296819}},
}

\bib{Einsiedler-Lindenstrauss-joinings-2}{unpublished}{
      author={Einsiedler, Manfred},
      author={Lindenstrauss, Elon},
       title={Joinings of higher rank torus actions on homogeneous spaces},
        date={2016},
        note={to appear in Publications math\'ematiques de l'IH\'ES, 39 pp.},
}

\bib{Einsiedler-Lindenstrauss-Mohammadi}{unpublished}{
      author={Einsiedler, Manfred},
      author={Lindenstrauss, Elon},
      author={Mohammadi, Amir},
       title={Diagonal actions in positive characteristic},
        date={2017},
        note={41pp., arXiv:1705.10418},
}

\bib{Einsiedler-Lindenstrauss-Michel-Venkatesh}{article}{
      author={Einsiedler, Manfred},
      author={Lindenstrauss, Elon},
      author={Michel, Philippe},
      author={Venkatesh, Akshay},
       title={The distribution of periodic torus orbits on homogeneous spaces},
        date={2009},
     journal={Duke Math. J.},
      volume={148},
      number={1},
       pages={119\ndash 174},
}

\bib{Einsiedler-Lindenstrauss-Ward-book}{book}{
      author={Einsiedler, Manfred},
      author={Lindenstrauss, Elon},
      author={Ward, Thomas},
       title={Entropy in dynamics},
        date={Feb.\ 2015},
        note={in preparation
  \url{http://www.personal.leeds.ac.uk/~mattbw/entropy/welcome.html}},
}

\bib{Einsiedler-Ward-book}{book}{
      author={Einsiedler, Manfred},
      author={Ward, Thomas},
       title={Ergodic theory with a view towards number theory},
      series={Graduate Texts in Mathematics},
   publisher={Springer-Verlag London Ltd.},
     address={London},
        date={2011},
      volume={259},
        ISBN={978-0-85729-020-5},
         url={http://dx.doi.org/10.1007/978-0-85729-021-2},
      review={\MR{2723325}},
}

\bib{Kechris-book}{book}{
      author={Kechris, Alexander~S},
       title={Classical descriptive set theory},
   publisher={Springer-Verlag New York},
        date={1995},
      volume={156},
}

\bib{Kalinin-Katok-Seattle}{incollection}{
      author={Kalinin, Boris},
      author={Katok, Anatole},
       title={Invariant measures for actions of higher rank abelian groups},
        date={2001},
   booktitle={Smooth ergodic theory and its applications (seattle, wa, 1999)},
      series={Proc. Sympos. Pure Math.},
      volume={69},
   publisher={Amer. Math. Soc.},
     address={Providence, RI},
       pages={593\ndash 637},
      review={\MR{2002i:37035}},
}

\bib{Katok-Spatzier}{article}{
      author={Katok, A.},
      author={Spatzier, R.~J.},
       title={Invariant measures for higher-rank hyperbolic abelian actions},
        date={1996},
        ISSN={0143-3857},
     journal={Ergodic Theory Dynam. Systems},
      volume={16},
      number={4},
       pages={751\ndash 778},
      review={\MR{97d:58116}},
}

\bib{Lindenstrauss-pointwise-theorems}{article}{
      author={Lindenstrauss, Elon},
       title={Pointwise theorems for amenable groups},
        date={2001},
        ISSN={0020-9910},
     journal={Invent. Math.},
      volume={146},
      number={2},
       pages={259\ndash 295},
      review={\MR{2002h:37005}},
}

\bib{Lindenstrauss-quantum}{article}{
      author={Lindenstrauss, Elon},
       title={Invariant measures and arithmetic quantum unique ergodicity},
        date={2006},
        ISSN={0003-486X},
     journal={Ann. of Math. (2)},
      volume={163},
      number={1},
       pages={165\ndash 219},
      review={\MR{MR2195133}},
}

\bib{Lindenstrauss-Barak}{article}{
      author={Lindenstrauss, Elon},
      author={Weiss, Barak},
       title={On sets invariant under the action of the diagonal group},
        date={2001},
        ISSN={0143-3857},
     journal={Ergodic Theory Dynam. Systems},
      volume={21},
      number={5},
       pages={1481\ndash 1500},
      review={\MR{2002j:22009}},
}

\bib{Margulis-book}{book}{
      author={Margulis, G.~A.},
       title={Discrete subgroups of semisimple {L}ie groups},
      series={Ergebnisse der Mathematik und ihrer Grenzgebiete (3) [Results in
  Mathematics and Related Areas (3)]},
   publisher={Springer-Verlag},
     address={Berlin},
        date={1991},
      volume={17},
        ISBN={3-540-12179-X},
      review={\MR{1090825 (92h:22021)}},
}

\bib{Margulis-Tomanov}{article}{
      author={Margulis, G.~A.},
      author={Tomanov, G.~M.},
       title={Invariant measures for actions of unipotent groups over local
  fields on homogeneous spaces},
        date={1994},
        ISSN={0020-9910},
     journal={Invent. Math.},
      volume={116},
      number={1-3},
       pages={347\ndash 392},
      review={\MR{95k:22013}},
}

\bib{Margulis-Tomanov-2}{article}{
      author={Margulis, G.~A.},
      author={Tomanov, G.~M.},
       title={Measure rigidity for almost linear groups and its applications},
        date={1996},
        ISSN={0021-7670},
     journal={J. Anal. Math.},
      volume={69},
       pages={25\ndash 54},
      review={\MR{MR1428093 (98i:22016)}},
}

\bib{Raghunathan-book}{book}{
      author={Raghunathan, M.~S.},
       title={Discrete subgroups of {L}ie groups},
   publisher={Springer-Verlag},
     address={New York},
        date={1972},
        note={Ergebnisse der Mathematik und ihrer Grenzgebiete, Band 68},
      review={\MR{MR0507234 (58 \#22394a)}},
}

\bib{Ratner-Acta}{article}{
      author={Ratner, Marina},
       title={On measure rigidity of unipotent subgroups of semisimple groups},
        date={1990},
        ISSN={0001-5962},
     journal={Acta Math.},
      volume={165},
      number={3-4},
       pages={229\ndash 309},
      review={\MR{91m:57031}},
}

\bib{Ratner-Annals}{article}{
      author={Ratner, Marina},
       title={On {R}aghunathan's measure conjecture},
        date={1991},
        ISSN={0003-486X},
     journal={Ann. of Math. (2)},
      volume={134},
      number={3},
       pages={545\ndash 607},
      review={\MR{93a:22009}},
}

\bib{Ratner-padic}{article}{
      author={Ratner, Marina},
       title={Raghunathan's conjectures for {C}artesian products of real and
  {$p$}-adic {L}ie groups},
        date={1995},
        ISSN={0012-7094},
     journal={Duke Math. J.},
      volume={77},
      number={2},
       pages={275\ndash 382},
      review={\MR{96d:22015}},
}

\end{biblist}
\end{bibdiv}

\end{document}